\newtheorem{theorem}{Theorem}[section]
\newtheorem{proposition}[theorem]{Proposition}
\newtheorem{lemma}[theorem]{Lemma}
\newtheorem{corollary}[theorem]{Corollary}
\newtheorem{definition}[theorem]{Definition}
\newtheorem{notation}[theorem]{Notation}
\newtheorem{example}[theorem]{Example}
\newtheorem{remark}[theorem]{Remark}
\newtheorem{problem}[theorem]{Problem}
\newcommand{\skipit}[1]{{}}
\newcommand{\prfend}{\hbox to7pt{\hfil}
\par\vskip-\baselineskip\hbox to\hsize
{\hfil\vbox {\hrule width6pt height6pt}}\vskip\baselineskip}
\newcommand{\ZZ}{\mathbb{Z}}
\newcommand{\N}{\mathbb{N}}
\newcommand{\Q}{\mathbb{Q}}
\newcommand{\TC}{\mathbb{T}}
\newcommand {\PP}{\mathbb{P}}
\newcommand{\af}{\mathbb{A}}
\newcommand{\myarrow}[2]{\hbox to #1pt{\hfil$\to$\hfil}{\hskip-#1pt{\raise
10pt\hbox to#1pt{\hfil$\scriptscriptstyle #2$\hfil}}}}
\begin{document}

\title{On the arithmetic Cohen-Macaulayness of  varieties parameterized by Togliatti systems.}

\author[Liena Colarte]{Liena Colarte}
\address{Department de matem\`{a}tiques i Inform\`{a}tica, Universitat de Barcelona, Gran Via de les Corts Catalanes 585, 08007 Barcelona,
Spain}
\email{liena.colarte@ub.edu}

\author[Emilia Mezzetti]{Emilia Mezzetti}
\address{Dipartimento di Matematica e  Geoscienze, Universit\`a di
Trieste, Via Valerio 12/1, 34127 Trieste, Italy}
\email{mezzette@units.it}

\author[Rosa M. Mir\'o-Roig]{Rosa M. Mir\'o-Roig}
\address{Department de matem\`{a}tiques i Inform\`{a}tica, Universitat de Barcelona, Gran Via de les Corts Catalanes 585, 08007 Barcelona,
Spain}
\email{miro@ub.edu}

\begin{abstract} 
Given any diagonal cyclic subgroup $\Lambda \subset GL(n+1,k)$ of order $d$, let $I_d\subset k[x_0,\ldots, x_n]$ be the ideal generated by all monomials $\{m_{1},\hdots, m_{r}\}$ of degree $d$ which are invariants of $\Lambda$. $I_d$ is a monomial Togliatti system, provided $r \leq \binom{d+n-1}{n-1}$, and in this case the  projective toric variety $X_d$ parameterized by $(m_{1},\hdots, m_{r})$  is called a $GT$-variety with group $\Lambda$. We prove that all these $GT$-varieties are arithmetically Cohen-Macaulay and we give a combinatorial expression of their Hilbert functions. In the case $n=2$, we compute explicitly the Hilbert function, polynomial and series of $X_d$.  We determine a minimal free resolution of its homogeneous ideal and we show that it is a binomial prime ideal generated by quadrics and cubics. We also provide the exact number of both types of generators. Finally, we pose the problem of determining whether a surface parameterized by a Togliatti system is aCM. We construct examples that are aCM and examples that are not.

\end{abstract}

\thanks{Acknowledgements:   The first and third authors  are partially   supported
by  MTM2016--78623-P. 
The second author is supported by PRIN 2017SSNZAW\_005 ''Moduli theory and birational classification'', by FRA of the University of Trieste, and is a member of  INDAM -- GNSAGA
}

\maketitle

\tableofcontents

\markboth{}{}

\today

\large

\section{Introduction.}

In 1946 \cite{T2}, Eugenio Togliatti classified the rational surfaces of $\mathbb P^N$, $N\geq 5$, parameterized by cubics and representing a Laplace equation of order $2$, i.e., whose osculating spaces have all dimension strictly less than the expected $5$. Only for one of the surfaces found by Togliatti the apolar ideal to the ideal generated by the polynomials giving the parameterization is artinian, and it is the ideal $J=(x^3, y^3, z^3, xyz)\subset K[x,y,z]$. In 2007 \cite{BK}, Brenner and Kaid proved that, over an algebraically closed field of  characteristic $0$, $J$ is the only ideal of the form $(x^3, y^3, z^3, f(x,y,z))$, with $f\in k[x,y,z]$ homogeneous of degree $3$, failing the weak Lefschetz property (see Section \ref{defs and prelim results}, \ref{Lefschetz properties and Togliatti systems}, for the definition). In 2013, the connection between these two examples has been clarified and extended. In the article \cite{MM-RO}, it is proved that, given an artinian ideal $I \subset k[x_0,\hdots,x_n]$ generated by $r$ forms of degree $d$, if $r \leq \binom{n+d-1}{n-1}$, then $I$ fails the weak Lefschetz property  in degree $d-1$ if and only if the $n$-dimensional variety $Y$ parameterized by the forms of degree $d$ apolar to $I$ satisfies a Laplace equation of order $d-1$. These ideals $I$, now called {\em Togliatti systems}, have been studied in a series of articles, see 
\cite{AMRV}, \cite{CMM-RS}, \cite{CMM-RS1}, \cite{CM-R}, \cite{DMMMN}, \cite{MM-R1}, \cite{MM-R}, \cite{MkM-R} and \cite{M-RS}.
In \cite{MM-R1} and \cite{M-RS} there are descriptions of the minimal monomial Togliatti systems with \lq\lq\thinspace low'' number of generators, where minimal means that it does not contain any smaller Togliatti system. 

There is an interesting family of examples generalizing one aspect of the ideal $J$ found by Togliatti. More precisely, we consider the following situation. We fix integers $2 \leq n < d$, $0 \leq \alpha_{0} \leq \cdots \leq \alpha_{n} < d$ such that $GCD(\alpha_{0},\hdots,\alpha_{n},d) = 1$ and we fix $e$, a $d$th primitive root of $1$. Let $\Lambda \subset GL(n+1,k)$ be the cyclic subgroup of order $d$ generated by the diagonal matrix $M_{d; \alpha_{0},\hdots, \alpha_{n}}:= diag(e^{\alpha_0},\hdots, e^{\alpha_n})$. We denote by $I_{d}$ the  artinian ideal generated by all monomials $\{m_{1},\hdots, m_{r}\}$ of degree $d$ which are invariants of $\Lambda$ and by $X_{d}$ the image of the morphism $\varphi_{I_{d}}: \PP^{n} \to \PP^{r-1}$ defined by $(m_{1},\hdots, m_{r})$. With this notation, $J$ is the ideal corresponding to $\Lambda = \langle M_{3;0,1,2} \rangle \subset GL(3,k)$.
The study of the ideals $I_{d} \subset k[x_0,x_1,x_2]$ started in \cite{MM-R}, where it is also determined the geometry of the surface $S_{d}$ corresponding to $\Lambda = \langle M_{d;0,1,2} \rangle \subset GL(3,k)$. 
The minimal free resolution of $S_{d}$ is described, as well as it is proved that $S_{d}$ is an arithmetically Cohen-Macaulay surface generated by quadrics and cubics. Afterwards in \cite{CM-R}, some results are generalized for the threefold $F_{d}$ corresponding to $\Lambda = \langle M_{d;0,1,2,3} \rangle$. The minimality of the ideals $I_{d}$ for any group $\Lambda = \langle M_{d;\alpha_0, \alpha_1, \alpha_2} \rangle$ is established in \cite{CMM-RS} and \cite{DMMMN}, and the argument relies on a careful study of the permanent of certain circulant matrices. 

In the present paper, we focus our attention on the arithmetic Cohen-Macaulay property (shortly aCM) of any variety $X_{d}$, as well as surfaces parameterized by Togliatti systems $I \subset k[x_0,x_1,x_2]$.
All these varieties are monomial projections of Veronese varieties. Any result in this direction should therefore be considered as a contribution to the longstanding problem of deciding whether projections of Veronese varieties are aCM, posed by Gr\"obner in \cite{Grob}. Our first result is Theorem \ref{Theorem: invariant basis of GT-systems}, stating the non-trivial fact that any monomial invariant of $\Lambda$ of degree a multiple of $d$  can be expressed as a product of monomial invariants  of $\Lambda$ of degree $d$. It relies on a result of Erd\"os, Ginzburg and Ziv (\cite{EGZ}). By a $GT$-system we shall mean a Togliatti system $I \subset k[x_0,\hdots,x_n]$ whose associated morphism $\varphi_{I}: \PP^{n} \to \PP^{r-1}$ is a Galois covering with group $\ZZ/d\ZZ$. It follows that $I_{d}$ is a $GT$-system with group $\Lambda$, provided $r \leq \binom{d+n-1}{n-1}$, and in this case we call $X_{d}$ a $GT$-variety with group $\Lambda$. 

Our main result proves that any variety $X_{d}$ is aCM, and so $GT$-varieties with group $\Lambda$ are aCM (Theorem \ref{Theorem: GT-varieties are aCM}). We deduce it from Theorem  \ref{Theorem: invariant basis of GT-systems}, proving that the coordinate ring of $X_d$ is the ring  of invariants $R^{\overline{\Lambda}}$, where $\overline{\Lambda}$ is the diagonal linear group of order $d^2$ generated by $M_{d;\alpha_{0},\hdots, \alpha_{n}}$ and $M_{d;1,\hdots,1}= diag(e,\hdots,e)$. Afterwards, we turn our attention to the Hilbert function of $X_{d}$ and we give a combinatorial description of it. In the case $n = 2$, we are able to obtain Theorem \ref{Proposition: Hil function surfaces} containing an explicit expression for the Hilbert polynomial and series, as well as  a minimal free resolution of any $GT$-surface (Theorem \ref{Theorem: minimal free resolution}). From this we provide a complete description of the homogeneous ideal of any $GT$-surface.

Finally, we address the general problem of the arithmetic Cohen-Macaulayness of surfaces parameterized by monomial Togliatti systems  whose coordinate rings are not rings of invariants of  finite linear groups. We give a counterexample showing that this property is not true in general. However, we provide a new class of Togliatti systems, whose varieties are aCM. These are not GT-systems, but are obtained as a different generalization of the ideal $J$. The proof relies on the study of the associated numerical semigroup, using a criterion due to Goto and Watanabe in \cite{GW} and  Trung in \cite{Trung}.

\vspace{0.3cm}
Let us outline how this work is organized. Section \ref{defs and prelim results} contains the basic definitions and results needed in the rest of this paper. We introduce semigroup rings and the rings of invariants by finite groups. Next, we present the basic facts on Galois coverings and quotient varieties by finite groups of automorphisms. Finally, we recall the notion of Togliatti systems and $GT$-systems introduced in  \cite{CMM-RS}, \cite{MM-R} and \cite{MM-RO}.

The main results of this paper are collected in Sections \ref{the aCM of GT-varieties} and \ref{GT-surfaces}. In Section \ref{the aCM of GT-varieties} we prove that any variety $X_{d}$ is aCM. In Section \ref{GT-surfaces}, we focus on the geometric properties of $GT$-surfaces. We explicitly determine their Hilbert function, polynomial and series. Fixed an integer $d\geq 3$ and $\Lambda = \langle M_{d;0,a,b} \rangle \subset GL(3,k)$ with $0 < a < b$, we are able to find a function $\theta(a,b,d)$ such that, for all $t \geq 0$, the Hilbert function $HF(X_{d},t)$ of $X_{d}$ equals $\frac{dt^{2} + \theta(a,b,d)t + 2}{2}$
(see Theorem \ref{Proposition: Hil function surfaces}). We find a minimal free resolution of any $GT$-surface (Theorem \ref{Theorem: minimal free resolution}), which allows us to conclude that its homogeneous ideal is a binomial prime ideal minimally generated  by quadrics and cubics. We give the exact number of both types of generators (see Corollary \ref{Corollary: number of generators}).

Section \ref{aCM togliatti surfaces} concerns the arithmetic Cohen-Macaulayness of surfaces parameterized by monomial Togliatti systems whose coordinate rings are not rings of invariants of  finite linear groups. 

\vskip 4mm \noindent
 {\bf Acknowledgements.} This work was partially carried out while the first author was visiting the {\em Universit\`{a} degli Studi di Trieste}. The first author would like to thank the university for its hospitality, specially the {\em Dipartimento di Matematica e Geoscienze}. The authors gratefully thank the
referees of this paper for their useful suggestions and comments. The authors would also like to thank M. Salat for useful discussions on $GT$-systems and related topics.

\vskip 4mm \noindent {\bf Notation.} Throughout this paper, $k$ denotes an algebraically closed field of characteristic zero, $R = k[x_{0},\hdots,x_{n}]$ and $GL(n+1,k)$ the multiplicative group of invertible $(n+1) \times (n+1)$ matrices with coefficients in $k$. If $z,z'$ are positive integers, we denote by $(z,z')$ the greatest common divisor of $z$ and $z'$.

\section{Preliminaries.}
\label{defs and prelim results}

In this section, we introduce the main objects and results we shall use.  First, we define semigroups and normal semigroups, and we present three results on the Cohen-Macaulayness of semigroup rings needed in the sequel (see  \cite{Bruns-Herzog}, \cite{GW}, \cite{Hochster} and \cite{Trung-Hoa}). Second, we prove that quotient varieties by the action of finite groups of automorphisms are Galois coverings and  we translate this result from the point of view of Invariant Theory. For a further exposition in Invariant Theory of finite groups, see for instance \cite{Bruns-Herzog} and \cite{Stanley}. Finally, we introduce the weak Lefschetz property  and the  notions of  Togliatti systems and $GT$-systems.

\subsection{Semigroup rings and rings of invariants}\label{semigroup rings and rings of Invariant}

By a {\em semigroup}, we mean a finitely generated subsemigroup $H = \langle h_{1}, \hdots, h_{t} \rangle$ of $\ZZ^{n+1}$. We denote by $L(H)$ the additive subgroup of $\ZZ^{n+1}$ generated by $H$ and by $r$ the rank of $L(H)$ in $\ZZ^{n+1}$. We also denote by $k[H] \subseteq R$ the semigroup ring associated to $H$, i.e., the graded $k$-algebra whose basis elements correspond to the monomials $X^{h_{j}}$, $j = 1,\hdots,t$, where $X^{h_j}$ denotes the monomial $x_0^{a_0}\cdots x_n^{a_n}$ with $h_{j} =(a_0,\ldots ,a_n)$. By a basis of $k[H]$ we mean a set of elements $\theta_{1},\dotsc,\theta_{\ell}\in k[H]$ such that $k[H] = k[\theta_{1},\dotsc,\theta_{\ell}]$.

\begin{definition} \rm  \label{Definition: Normal semigroup} A semigroup $H \subset \ZZ^{n+1}$ is called {\em normal} if it coincides with its saturation $\overline{H}:= \{w \in L(H) \, \mid \, zw \in H, \; \text{for some} \; z \in \ZZ_{\geq 0}\}$.
\end{definition}

Concerning normal semigroups, Hochster proves the following result.

\begin{proposition}\label{Proposition:Hochster CM} If a semigroup $H$ is normal, then $k[H]$ is Cohen-Macaulay.
\end{proposition}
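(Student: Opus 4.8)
The plan is to realize $k[H]$ as a (split) subring of a polynomial ring and then invoke the Hochster--Roberts theorem, which in characteristic zero guarantees that a direct summand of a polynomial ring is Cohen--Macaulay. First I would carry out two standard reductions. Let $U = H \cap (-H)$ be the group of units of $H$. Normality forces $L(H)/U$ to be torsion free: if $zh \in U$ for some $h \in H$ and $z \in \ZZ_{>0}$, then $z(-h) = -zh \in H$, so $-h \in H$ by normality, i.e. $h \in U$. Hence $U$ is a direct summand of $L(H)$, and after choosing a complement one gets $k[H] \cong k[\ZZ^{e}] \otimes_{k} k[H']$ with $\ZZ^{e} = U$ and $H'$ a \emph{pointed} normal semigroup; since a Laurent polynomial extension is Cohen--Macaulay iff the base ring is, we may assume $H$ pointed. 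Identifying $L(H)$ with $\ZZ^{d}$, where $d = \operatorname{rank} L(H) = \dim k[H]$, normality then says exactly that $H = C \cap \ZZ^{d}$ for $C := \RR_{\geq 0}H$ a full-dimensional pointed rational polyhedral cone.

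Next, write $C = \bigcap_{j=1}^{m}\{\, v : \sigma_{j}(v) \geq 0 \,\}$ with integral linear functionals $\sigma_{1},\dots,\sigma_{m}$ (inward facet normals), and set $\sigma = (\sigma_{1},\dots,\sigma_{m}) : \ZZ^{d} \to \ZZ^{m}$, injective because $C$ is pointed. It induces a monomial embedding $k[H] \hookrightarrow k[t_{1},\dots,t_{m}]$, $X^{h} \mapsto t^{\sigma(h)}$ (well defined since $\sigma_{j}(h) \geq 0$ on $H$), with image $k[\sigma(H)]$. The one further place normality enters is the identity $\sigma(H) = G \cap \N^{m}$ where $G := \sigma(\ZZ^{d})$: if $b = \sigma(a) \in \N^{m}$ with $a \in \ZZ^{d}$, then $\sigma_{j}(a) = b_{j} \geq 0$ for all $j$, so $a \in C \cap \ZZ^{d} = H$. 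Therefore $k[t_{1},\dots,t_{m}] = \bigoplus_{b \in \N^{m}} k\,t^{b}$ splits as a $k[\sigma(H)]$-module into $k[\sigma(H)] \oplus \bigl(\bigoplus_{b \in \N^{m}\setminus G} k\,t^{b}\bigr)$, the second summand being a submodule since $b \notin G$ and $c \in \sigma(H) \subseteq G$ force $b+c \notin G$. Thus $k[H] \cong k[\sigma(H)]$ is a direct summand of a polynomial ring, and Hochster--Roberts (using $\operatorname{char} k = 0$) gives that it is Cohen--Macaulay.

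I expect the main friction to be the bookkeeping in the reduction to the pointed, full-rank case (verifying the Laurent factor genuinely splits off as a tensor factor, which is exactly where torsion-freeness of $L(H)/U$ is needed); once there, the splitting $k[H] \hookrightarrow k[t_{1},\dots,t_{m}]$ is essentially formal. For a characteristic-free argument avoiding Hochster--Roberts, the alternative is to triangulate $C$ into simplicial subcones using no new rays: for a simplicial cone with primitive ray generators $\rho_{1},\dots,\rho_{d}$, the half-open parallelepiped decomposition $C'\cap\ZZ^{d} = \bigsqcup_{\beta}\bigl(\beta + \sum_{j}\N\rho_{j}\bigr)$ exhibits $k[C'\cap\ZZ^{d}]$ as a \emph{free} module over the polynomial ring $k[X^{\rho_{1}},\dots,X^{\rho_{d}}]$, hence Cohen--Macaulay; one then assembles the general case by induction on the number of maximal cones via a Mayer--Vietoris argument for local cohomology at the graded maximal ideal, the low-degree vanishing being forced by the convexity of $C$. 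In that route the genuine obstacle is controlling the non-normal semigroup rings attached to unions of simplicial cones along interior walls, which is precisely what the convexity input resolves.
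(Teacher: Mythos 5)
Your argument is correct, but it is a genuinely different route from the one the paper takes: the paper offers no argument of its own and simply cites Hochster's Theorem 1, whose original proof is a direct combinatorial one (characteristic-free, via the geometry of the cone and its polytopal cross-section), whereas you prove the statement by exhibiting $k[H]$ as a split monomial subring of a polynomial ring and invoking the characteristic-zero direct-summand principle. The steps all check: torsion-freeness of $L(H)/U$, the tensor factorization splitting off the unit group, the identification $H=C\cap\ZZ^{d}$ (which silently uses that nonnegative \emph{rational} coefficients suffice to express an integral point of a rational cone), injectivity of $\sigma$ from pointedness, the identity $\sigma(H)=G\cap\N^{m}$, and the $\ZZ^{m}$-graded splitting. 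Two remarks. First, the cleanest justification of your final citation is to note that $k[G\cap\N^{m}]$ is exactly the ring of invariants of the diagonalizable group $D=\mathrm{Hom}(\ZZ^{m}/G,k^{\times})$ acting on $k[t_{1},\dots,t_{m}]$ by characters; in characteristic zero $D$ is linearly reductive and your splitting is precisely its Reynolds operator, so Hochster--Roberts applies verbatim (the bare statement that any split subring of a polynomial ring is Cohen--Macaulay is, in full generality, rather Hochster--Huneke). Second, in this paper every semigroup sits inside $\ZZ_{\geq 0}^{n+1}$, so the pointedness reduction you identify as the main friction is vacuous in all of the paper's applications. As for what each approach buys: yours is short, fits the standing hypothesis $\Char k=0$, and matches the paper's invariant-theoretic theme (compare Theorem \ref{Hochster-Eagon} and the discussion of rings of invariants in Section \ref{semigroup rings and rings of Invariant}); the cited proof of Hochster is characteristic-free, which your main line of argument is not --- your sketched triangulation/local-cohomology alternative would recover that generality, but, as you acknowledge, at substantially greater cost.
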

\begin{proof} See \cite[Theorem 1]{Hochster}.
\end{proof}

A large family of normal semigroups comes from Invariant Theory, precisely those associated to finite abelian groups acting linearly on $R$. We take $\Lambda =
\ZZ/\ZZ d_{1} \oplus \cdots \oplus \ZZ/\ZZ d_{r}$ and we  choose $d_{i}$-th primitive roots of unity $e_{i}$, $i = 1, \hdots, r$.  $\Lambda$ can be linearly represented in $GL(n+1,k)$ by means of $r$ diagonal matrices $diag(e_{i}^{u_{0,i}}, \hdots, e_{i}^{u_{n,i}})$, where $u_{j,i} \in \N$, $0 \leq j \leq n$, $1 \leq i \leq r$. We consider the ring of invariants $R^{\Lambda} := \{p \in R \,\mid\, \lambda(p) = p \; \text{for all} \; \lambda \in \Lambda\}$. A polynomial $p \in R^{\Lambda}$ if and only if all its monomials belong to $R^{\Lambda}$. By Noether's degree bound (see \cite[1.2 Theorem.]{Stanley}), $R^{\Lambda}$ has a finite basis consisting of monomials of degree at most the order of $\Lambda$.
Let $X^{h_{1}}, \hdots, X^{h_{t}}$ be a monomial basis of $R^{\Lambda}$ and $H = \langle h_{1}, \hdots, h_{t} \rangle$. Then $R^{\Lambda} \cong k[H]$.
Furthermore, a monomial $x_{0}^{a_{0}} \cdots x_{n}^{a_{n}} \in R^{\Lambda}$ if and only if $(a_{0}, \hdots, a_{n})$ satisfies the system of congruences:
\begin{equation}
a_{0}u_{0,i} + \cdots + a_{n}u_{n,i} \equiv 0 \pmod{d_i},\  i = 1,\hdots, r.
\end{equation}
Now, if $w \in L(H)$ is such that $zw \in H$ for some $z \in \ZZ_{\geq 0}$, then  $w \in H$. So $H$ is normal and $k[H]$ is a CM ring.

By \cite[Proposition 13]{Hochster-Eagon}, the ring of invariants of any finite group acting linearly on $R$ is CM. This is a particular case of \cite[Proposition 12]{Hochster-Eagon} that we present next.
Let $A$ be a subring of $R$: a {\em Reynolds operator} is a $A$-linear map $\rho: R \to A$ such that $\rho_{|A} = id_{A}$. We have:

\begin{theorem}\label{Hochster-Eagon} Let $A$ be a subring of $R$ such that there exists a Reynolds operator $\rho$ and $R$ is integral over $A$. Then $A$ is a Cohen-Macaulay ring.
\end{theorem}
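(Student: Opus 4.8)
The statement is the classical Hochster--Eagon criterion, and the plan is to produce an explicit maximal regular sequence on $A$ by transporting one from $R$ along the Reynolds operator $\rho$. Throughout we may assume $A$ is a graded subalgebra of $R$ (the case relevant here, e.g.\ $A = R^{\Lambda}$), the general case being handled by localizing at the relevant maximal ideal.

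\emph{Step 1: $A$ is a finitely generated graded $k$-algebra.} The Reynolds operator realizes $A$ as a direct summand of $R$ as an $A$-module, so for every ideal $\mathfrak{a} \subseteq A$ one has $\mathfrak{a}R \cap A = \mathfrak{a}$: indeed, if $x = \sum_j a_j r_j \in \mathfrak{a}R \cap A$ with $a_j \in \mathfrak{a}$ and $r_j \in R$, then applying $\rho$ and using $A$-linearity together with $\rho|_A = \mathrm{id}_A$ gives $x = \rho(x) = \sum_j a_j \rho(r_j) \in \mathfrak{a}$. Hence the extension map $\mathfrak{a} \mapsto \mathfrak{a}R$ is injective on ideals of $A$, and the ascending chain condition in $R$ forces $A$ to be Noetherian. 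Since $R$ is a finitely generated $k$-algebra integral over $A$, it is a finite $A$-module, and the Artin--Tate lemma then shows $A$ is a finitely generated $k$-algebra, graded by restriction of the grading of $R$.

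\emph{Step 2: a homogeneous system of parameters of $A$ is a regular sequence on $R$.} Set $d = \dim A = \dim R$, the equality holding because $R$ is integral over $A$, and choose by Noether normalization a homogeneous system of parameters $\theta_1, \dots, \theta_d$ of $A$. As $R$ is a finite $A$-module, $R/(\theta_1,\dots,\theta_d)R$ is a finite module over the Artinian ring $A/(\theta_1,\dots,\theta_d)A$, hence Artinian; so $\theta_1,\dots,\theta_d$ is also a system of parameters of $R$. Since $R = k[x_0,\dots,x_n]$ is Cohen--Macaulay, $\theta_1,\dots,\theta_d$ is a regular sequence on $R$.

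\emph{Step 3: descend regularity to $A$, and conclude.} It suffices to show $\theta_1,\dots,\theta_d$ is a regular sequence on $A$. Suppose $a \in A$ satisfies $\theta_{i+1}a \in (\theta_1,\dots,\theta_i)A \subseteq (\theta_1,\dots,\theta_i)R$. Since $\theta_1,\dots,\theta_d$ is a regular sequence on $R$, we may write $a = \sum_{j=1}^{i} \theta_j r_j$ with $r_j \in R$; applying the $A$-linear map $\rho$ and using $a, \theta_j \in A$ yields $a = \rho(a) = \sum_{j=1}^{i} \theta_j \rho(r_j)$ with $\rho(r_j) \in A$, so $a \in (\theta_1,\dots,\theta_i)A$. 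As $(\theta_1,\dots,\theta_d)A$ is a proper ideal, $\theta_1,\dots,\theta_d$ is a regular sequence on $A$ of length $d = \dim A$, hence $\operatorname{depth} A = \dim A$ and $A$ is Cohen--Macaulay. The genuinely delicate points are the finite-generation bookkeeping in Step 1 and, above all, Step 2 — ensuring the chosen parameters of $A$ remain a system of parameters (equivalently a regular sequence) after extension to $R$, which is exactly where the integrality hypothesis and the Cohen--Macaulayness of the polynomial ring enter; the transfer back to $A$ via the Reynolds operator is then short and purely formal.
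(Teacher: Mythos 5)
Your proof is correct in the graded setting, which is the only setting the paper actually uses: every ring $A$ to which this theorem is applied is a ring of invariants $R^{G}$ of a finite linear group, hence a graded $k$-subalgebra of $R$ containing $k$. The paper itself offers no argument beyond a citation of Hochster--Eagon's Proposition 12, whose original proof runs through generic perfection of determinantal loci; what you give instead is the now-standard ``direct summand'' proof: the Reynolds operator forces $\mathfrak{a}R\cap A=\mathfrak{a}$ for every ideal $\mathfrak{a}\subseteq A$, whence $A$ is Noetherian and, by Artin--Tate together with integrality, a finitely generated graded $k$-algebra over which $R$ is module-finite; a homogeneous system of parameters of $A$ is then a system of parameters, hence a regular sequence, on the Cohen--Macaulay ring $R$, and applying $\rho$ to a relation $a=\sum_j\theta_j r_j$ transports the regularity back down to $A$. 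Each of these steps is sound, and this route has the virtue of being self-contained and elementary where the paper merely cites. The one point you should not leave implicit is the opening reduction: the theorem as stated allows an arbitrary subring $A$, and ``localizing at the relevant maximal ideal'' does not by itself recover Step 2, since Noether normalization and the equality $\dim A=\dim R$ are global, graded arguments, and in the local non-graded case one must separately verify that a system of parameters of $A_{\mathfrak{m}}$ remains one after extension to $R$. Since every application in the paper is to a graded invariant ring containing $k$, the cleanest fix is simply to state and prove the result in that generality rather than gesture at the general case.
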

\begin{proof}
See \cite[Proposition 12]{Hochster-Eagon}.
\end{proof}

Let $G \subset GL(n+1,k)$ be a finite group acting on $R$. We denote by $R^{G}$ the ring of invariants of $G$. One can easily check  that the map $\rho:R\to R^G$, defined by $\rho(p) = |G|^{-1} \sum_{g \in G} g(p)$, is a Reynolds operator. Furthermore, any element $p \in R$ is a solution of the equation
$$\prod_{g \in G} (Y - g(p)) = 0,$$
which is a polynomial in $Y$ with coefficients in $R^{G}$. So $R$ is integral over $R^{G}$ and, by Theorem \ref{Hochster-Eagon}, $R^{G}$ is CM.

Partially motivated by the results of Proposition \ref{Proposition:Hochster CM} and Theorem  \ref{Hochster-Eagon},  Goto, Suzuki and Watanabe, and Trung proved:

\begin{theorem}\label{Theorem:Criterion aCM-Trung} Let $H$ be a semigroup and  assume that  there exist $\Q$-linearly independent elements $f_{1}, \hdots, f_{m} \in H$ such that $z \cdot H \subset \langle f_{1}, \hdots, f_{m}  \rangle$, for some positive integer $z$. The following conditions are equivalent.
\begin{itemize}
\item[(i)] $k[H]$ is Cohen-Macaulay.
\item[(ii)] If $w \in L(H)$ and there exist $i,j$ with $1\leq i\leq j\leq m$, such that $w + f_{i} \in H$ and $w + f_{j} \in H$, then $w \in H$.
\item[(iii)] $\cap_{i=1}^{m} (f_{i} + H) \subset (\sum_{i=1}^{m} f_{i}) + H$.
\item[(iv)] $H = \cap_{i=1}^{m} H_{i}$, where $H_{i} = \{w \in L(H) \, \mid \, w + g \in H \;\text{for some}\; g \in (\sum_{j = 1, j \neq i}^{m} \Q_{+}f_{j}) \cap H$.
\end{itemize}
In particular, set $H^{1} = \{w \in \overline{H} \, \mid \,
w + f_{i}, w + f_{j} \in H \; \text{for some} \; i \neq j \in \{1,\hdots, m\}\}$. Then $k[H]$ is Cohen-Macaulay if and only if $H^{1} = H$.
\end{theorem}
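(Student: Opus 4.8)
The plan is to identify $\theta_{i}:=X^{f_{i}}$, $i=1,\ldots,m$, as a homogeneous system of parameters of $A:=k[H]$ and to translate ``$\theta_{1},\ldots,\theta_{m}$ is a regular sequence'' into semigroup combinatorics. First I would note that $\dim A=m$: the $X^{f_{i}}$ are algebraically independent over $k$ (distinct monomials $\prod_{i}(X^{f_{i}})^{a_{i}}=X^{\sum a_{i}f_{i}}$ have distinct multidegrees because the $f_{i}$ are $\Q$-linearly independent), and each $X^{h}$ with $h\in H$ is integral over $k[X^{f_{1}},\ldots,X^{f_{m}}]$ since $zh=\sum_{i}c_{i}f_{i}$ with $c_{i}\in\ZZ_{\geq 0}$ gives $(X^{h})^{z}=\prod_{i}(X^{f_{i}})^{c_{i}}$; hence $A$ is module-finite over the polynomial ring $k[X^{f_{1}},\ldots,X^{f_{m}}]$, so $\dim A=m=\operatorname{rank}L(H)$. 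As $A$ is a finitely generated connected $\N$-graded $k$-algebra and a domain, the standard criterion gives: $A$ is Cohen--Macaulay if and only if $\theta_{1},\ldots,\theta_{m}$ is a regular sequence.

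The combinatorial heart is the following bookkeeping. The ideal $(\theta_{1},\ldots,\theta_{p-1})A$ is spanned by $\{X^{w}\mid w\in\bigcup_{j<p}(f_{j}+H)\}$, so $A/(\theta_{1},\ldots,\theta_{p-1})A$ has $k$-basis $\{X^{w}\mid w\in H\setminus\bigcup_{j<p}(f_{j}+H)\}$, and multiplication by $\theta_{p}$ sends $X^{w}\mapsto X^{w+f_{p}}$. Hence $\theta_{p}$ is a non-zerodivisor on this quotient precisely when, for all $w\in H$, $w+f_{p}\in\bigcup_{j<p}(f_{j}+H)$ forces $w\in\bigcup_{j<p}(f_{j}+H)$. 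Assuming (ii): if $w+f_{p}=f_{j}+h$ with $j<p$ and $h\in H$, set $w':=w-f_{j}\in L(H)$; then $w'+f_{j}=w\in H$ and $w'+f_{p}=h\in H$, so (ii) applied to the distinct indices $j\neq p$ gives $w'\in H$, i.e. $w\in f_{j}+H$. Thus each $\theta_{p}$ is a non-zerodivisor on the successive quotient, $\theta_{1},\ldots,\theta_{m}$ is a regular sequence, and $A$ is Cohen--Macaulay; this proves (ii)$\Rightarrow$(i).

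For (i)$\Rightarrow$(ii), assume $A$ is Cohen--Macaulay and let $w\in L(H)$ with $w+f_{i},w+f_{j}\in H$, $i\neq j$; suppose for contradiction $w\notin H$. Then $w+f_{i}\notin f_{i}+H$, so the class of $X^{w+f_{i}}$ is nonzero in $B:=A/\theta_{i}A$, which is Cohen--Macaulay of dimension $m-1$. Since $\dim A/(\theta_{i},\theta_{j})A=m-2$, the image of $\theta_{j}$ is part of a homogeneous system of parameters of $B$, hence a non-zerodivisor on $B$; therefore $X^{w+f_{i}+f_{j}}\notin f_{i}+H$, i.e. $w+f_{j}\notin H$, a contradiction. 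So $w\in H$ and (i)$\Leftrightarrow$(ii).

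It remains to dispose of the purely combinatorial equivalences (ii)$\Leftrightarrow$(iii)$\Leftrightarrow$(iv) and the reformulation $H^{1}=H$. For (ii)$\Rightarrow$(iii), given $v\in\bigcap_{i}(f_{i}+H)$ one shows by induction on $|S|$ that $v-\sum_{i\in S}f_{i}\in H$ for every $S\subseteq\{1,\ldots,m\}$: for $|S|\geq 2$ pick $a\neq b$ in $S$, so by induction $w:=v-\sum_{i\in S}f_{i}$ satisfies $w+f_{a}=v-\sum_{i\in S\setminus\{a\}}f_{i}\in H$ and $w+f_{b}\in H$, whence $w\in H$ by (ii); taking $S=\{1,\ldots,m\}$ gives (iii). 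Conversely, from $w+f_{a},w+f_{b}\in H$ ($a\neq b$) one checks that $v:=w+\sum_{i}f_{i}\in\bigcap_{\ell}(f_{\ell}+H)$, so (iii) forces $w\in H$. The equivalences with (iv) and with $H^{1}=H$ are of the same elementary nature, using in addition that $w+f_{i},w+f_{j}\in H$ with $i\neq j$ already forces $zw\in\langle f_{1},\ldots,f_{m}\rangle\subseteq H$ (subtract $z(w+f_{i})$ and $z(w+f_{j})$ and use $\Q$-linear independence), so $w\in\overline{H}$ and (ii) reads precisely as $H^{1}\subseteq H$. I expect the only real work to lie in the reduction of Cohen--Macaulayness to the regular-sequence condition and the basis bookkeeping of the second paragraph; alternatively, one may appeal directly to \cite{GW} and \cite{Trung}.
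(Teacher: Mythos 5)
The paper offers no proof of this theorem: it defers entirely to \cite{GW} and \cite{Trung}. What you have written is a self-contained reconstruction of the standard argument from those sources, and the parts that the paper actually uses later are correct. Specifically: the identification of $\theta_{i}=X^{f_{i}}$ as a homogeneous system of parameters (via integrality of each $X^{h}$ over $k[\theta_{1},\hdots,\theta_{m}]$ and algebraic independence of the $\theta_{i}$), the graded criterion reducing Cohen--Macaulayness to regularity of this h.s.o.p., the monomial bookkeeping identifying a $k$-basis of $A/(\theta_{1},\hdots,\theta_{p-1})A$ with $H\setminus\bigcup_{j<p}(f_{j}+H)$, both implications of (i)$\Leftrightarrow$(ii), the equivalence (ii)$\Leftrightarrow$(iii), and the observation that $w+f_{i},w+f_{j}\in H$ with $i\neq j$ already forces $zw\in\langle f_{1},\hdots,f_{m}\rangle$ and hence $w\in\overline{H}$ (so that (ii) is precisely the condition $H^{1}=H$) all check out. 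Since Theorem \ref{Theorem: Main Theorem} only invokes the $H^{1}$ criterion, your argument in fact makes that application independent of the external references, which is a genuine gain over the paper's bare citation.

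Two caveats. First, you silently read condition (ii) with $i\neq j$. As printed, with $1\leq i\leq j\leq m$ allowing $i=j$, condition (ii) is false in general even when $k[H]$ is Cohen--Macaulay: take $H=\langle 2,3\rangle\subset\ZZ$, $m=1$, $f_{1}=2$, $z=2$, $w=1$; then $w+f_{1}=3\in H$ but $w\notin H$, while $k[H]=k[t^{2},t^{3}]$ is a one-dimensional domain, hence CM. The intended hypothesis, consistent with the definition of $H^{1}$ and with \cite{GW} and \cite{Trung}, is $i\neq j$, and your proof establishes exactly that version; this should be stated rather than assumed. Second, the equivalence with (iv) is not ``of the same elementary nature'' as (ii)$\Leftrightarrow$(iii): the inclusion $\bigcap_{i}H_{i}\subseteq H$ under (ii) requires a real descent argument to pass from ``$w+g\in H$ for some $g\in(\sum_{j\neq i}\Q_{+}f_{j})\cap H$'' to a configuration where (ii) applies, and you do not supply it. This does not affect any application in the paper, since (iv) is never used, but as a proof of the full statement it is the one place where you must either do the work or, as the paper does, fall back on the citation.
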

\begin{proof} See \cite[Theorem 2.6]{GW} and \cite[Lemma 2]{Trung}.
\end{proof}

\begin{remark}\rm Let $H$ be a normal semigroup which satisfies the hypothesis of Theorem \ref{Theorem:Criterion aCM-Trung}. By Proposition \ref{Proposition:Hochster CM},
the semigroup ring $k[H]$ is CM. Notice that $H$ trivially verifies Theorem \ref{Theorem:Criterion aCM-Trung}(ii).
\end{remark}

\subsection{Galois coverings and quotient varieties}\label{Galois coverings and quotient varieties} 

We recall that a {\em covering} of a variety $X$ consists of a variety $Y$ and a finite morphism $f: Y \to X$. The {\em  group of deck transformations $G:=Aut(f)$} is defined to be the group of automorphisms of $Y$ commuting with $f$. We say that $f: Y \to X$ is a {\em covering with group} $Aut(f)$. 

\begin{definition} \rm  A covering $f: Y \to X$ with group $Aut(f)$ is {\em Galois} if $Aut(f)$ acts transitively on a fibre $f^{-1}(x)$ for some $x \in X$.
\end{definition}

When a group $G$ acts on a variety $X$, there is a natural way of constructing Galois coverings.

\begin{definition} \rm  Let $G$ be a group acting on a variety $X$. The {\em quotient of $X$ by $G$} is defined to be a variety $Y$ with
a surjective morphism $p: X \to Y$ such that any morphism
$\rho: X \to Z$ to a variety $Z$ factors through $p$
if and only if $\rho(x) = \rho(g(x))$, for all $x \in X$ and
$g \in G$.
\end{definition}

\begin{remark} \rm If it exists, the quotient variety is unique up to isomorphism and is denoted by $X/G$. In particular, the morphism $p: X \to X/G$ verifies that if $x,y \in X$, then $p(x) = p(y)$ if and only if $g(x) = y$, for some $g \in G$.
\end{remark}

\begin{proposition}\label{Proposition: Quotient variety affine} Let $G$ be a finite group acting on an affine variety $X$. Then, $X/G$ is the affine variety whose coordinate ring $A(X/G)$ is the ring of regular functions on X, invariants of $G$, and $\pi: X \to X/G$ is the quotient of $X$ by $G$.
\end{proposition}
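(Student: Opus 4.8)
The plan is to reduce the statement to commutative algebra and then carry out a short patching argument. Write $A=A(X)$ for the coordinate ring of $X$ and let $G$ act on $A$ by $k$-algebra automorphisms, with ring of invariants $A^{G}$. The first task is to check that $A^{G}$ is again a finitely generated $k$-algebra and that $A$ is a finite $A^{G}$-module: every $a\in A$ satisfies the monic equation $\prod_{g\in G}(Y-g(a))=0$ whose coefficients lie in $A^{G}$ (this is exactly the integrality argument used after Theorem \ref{Hochster-Eagon}), so if $A=k[a_{1},\dots,a_{m}]$ and $B\subseteq A^{G}$ denotes the subalgebra generated by the finitely many coefficients arising from $a_{1},\dots,a_{m}$, then $A$ is integral and of finite type, hence module-finite, over the Noetherian ring $B$; since $A^{G}$ is a $B$-submodule of the Noetherian $B$-module $A$, it is module-finite over $B$, and in particular a finitely generated $k$-algebra (Noether's finiteness theorem; see \cite{Bruns-Herzog}). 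Define $X/G$ to be the affine variety with coordinate ring $A^{G}$ (which is reduced, and a domain if $X$ is irreducible), and let $\pi\colon X\to X/G$ be the morphism dual to the inclusion $A^{G}\hookrightarrow A$. Then $\pi$ is finite, and since $A^{G}\hookrightarrow A$ is injective $\pi$ is dominant; a dominant finite morphism is surjective, so $\pi$ is surjective.

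The key geometric point is to identify the fibres of $\pi$ with the $G$-orbits. If $y=g(x)$ then $f(y)=f(g(x))=(g^{-1}f)(x)=f(x)$ for every $f\in A^{G}$, so $\pi(x)=\pi(y)$. Conversely, if $y\notin Gx$ then $Gx$ and $Gy$ are disjoint finite sets of closed points, so by the Chinese Remainder Theorem in the Noetherian ring $A$ there is $f\in A$ with $f\equiv 0$ on $Gx$ and $f\equiv 1$ on $Gy$. The norm $N(f):=\prod_{g\in G}g(f)$ lies in $A^{G}$, and $N(f)(x)=\prod_{g\in G}f(g^{-1}x)=0$ because the factor corresponding to $g=1$ vanishes, while $N(f)(y)=\prod_{g\in G}f(g^{-1}y)=1\neq 0$; hence $\pi(x)\neq\pi(y)$. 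Thus the fibres of $\pi$ are exactly the $G$-orbits, which in particular yields the characterization stated in the Remark preceding the Proposition.

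It remains to verify the universal property of the quotient. One implication is immediate: if $\rho\colon X\to Z$ factors as $\rho=\bar{\rho}\circ\pi$, then $\rho(g(x))=\bar{\rho}(\pi(g(x)))=\bar{\rho}(\pi(x))=\rho(x)$, since $\pi$ is constant on $G$-orbits. For the converse, suppose $\rho\colon X\to Z$ satisfies $\rho(g(x))=\rho(x)$ for all $x$ and $g$. Assume first $Z$ is affine: for $\phi\in A(Z)$ and every closed point $x$ one has $\big(g(\rho^{*}\phi)\big)(x)=(\rho^{*}\phi)(g^{-1}x)=\phi\big(\rho(g^{-1}x)\big)=\phi(\rho(x))=(\rho^{*}\phi)(x)$, so $\rho^{*}\phi\in A^{G}$; hence $\rho^{*}$ factors through $A^{G}$, and dualizing gives $\bar{\rho}\colon X/G\to Z$ with $\bar{\rho}\circ\pi=\rho$, unique because $\pi^{*}$ is injective. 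For general $Z$, observe that for $f\in A^{G}$ one has $(A_{f})^{G}=(A^{G})_{f}$ (the Reynolds operator commutes with localization at the invariant element $f$, as $|G|$ is invertible), so $\pi^{-1}$ of the basic open $(X/G)_{f}$ equals $\operatorname{Spec}A_{f}$ and $\pi$ restricts there to the affine quotient just treated. Now cover $Z$ by affine opens $V$; each $\rho^{-1}(V)$ is $G$-stable (because $\rho$ is constant on orbits), hence $\pi$-saturated by the previous paragraph, and since $\pi$ is finite (so closed) and surjective, $\pi(\rho^{-1}(V))$ is open in $X/G$ with preimage $\rho^{-1}(V)$; covering this open set by basic opens $(X/G)_{f}$ and applying the affine case to $\rho\colon\operatorname{Spec}A_{f}\to V$ produces morphisms $\bar{\rho}_{f}$ that agree on overlaps by uniqueness, hence glue to the desired $\bar{\rho}\colon X/G\to Z$; uniqueness of $\bar{\rho}$ follows from surjectivity of $\pi$.

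I expect the main obstacle to be the identification of the fibres of $\pi$ with the $G$-orbits, i.e.\ the separation argument via the norm $N(f)$, which is where finiteness of $G$ and algebraic closedness of $k$ are genuinely used; the finite generation of $A^{G}$ is classical but should be recorded, and the descent to non-affine $Z$ is routine patching once fibres $=$ orbits is available. An alternative would be to quote the construction of affine GIT quotients directly (see \cite{Bruns-Herzog}), but the orbit–fibre argument is short enough to include.
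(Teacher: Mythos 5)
Your proof is correct, but it takes a genuinely different route from the paper in the trivial sense that the paper offers no argument at all: its ``proof'' is the citation \cite[Section 12, Proposition 18]{Serre1}, so you have supplied in full the classical construction that underlies the cited result. Your three steps are all sound: the Artin--Tate/Noether argument for finite generation of $A^{G}$ (each generator of $A$ is a root of its monic $G$-orbit polynomial, $A$ is module-finite over the subalgebra $B$ generated by the coefficients, and $A^{G}$ is a $B$-submodule of a Noetherian $B$-module); the identification of fibres with orbits via the norm $N(f)=\prod_{g}g(f)$ applied to a function separating the two disjoint finite orbits (this is exactly where finiteness of $G$ enters, and note that in fact \emph{every} factor of $N(f)(x)$ vanishes, not just the $g=1$ one); and the verification of the universal property, including the reduction $(A_{f})^{G}=(A^{G})_{f}$ for invariant $f$ via the Reynolds operator (valid since $\mathrm{char}\,k=0$) and the saturation/closedness argument that makes the patching over a non-affine target work. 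The only cosmetic remarks are that you should say explicitly that $A^{G}$ is reduced (being a subring of the reduced ring $A$), so that $\operatorname{Spec}A^{G}$ really is a variety in the paper's sense, and that the uniqueness of $X/G$ up to isomorphism, which the paper records in the Remark following the definition, is immediate from the universal property you establish. What your approach buys is self-containedness and an explicit description of $\pi$ as the orbit map; what the citation buys is brevity and deference to the standard reference.
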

\begin{proof}
See \cite[Section 12, Proposition 18]{Serre1}.
\end{proof}

\begin{proposition}\label{Proposition: Quotient variety projective} Let $G$ be a finite group acting on a projective variety $X$ and $X/G$ its quotient space. If the orbit of any point $x \in X$ is contained in an affine open subset of $X$, then $X/G$ is a projective variety and $\pi: X \to X/G$ is the quotient of $X$ by $G$.
\end{proposition}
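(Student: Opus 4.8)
The plan is to reduce everything to the affine quotient of Proposition~\ref{Proposition: Quotient variety affine}: first cover $X$ by $G$-invariant affine open subsets, then take the affine quotient of each piece and glue, and finally verify that the glued variety is projective and has the required universal property. The role of the hypothesis on orbits is precisely to supply the $G$-invariant affine cover. Given $x\in X$, choose (using the hypothesis, or simply a hyperplane missing the finite orbit in a projective embedding) an affine open $W$ with $G\cdot x\subseteq W$. Then $U:=\bigcap_{g\in G} g(W)$ is a $G$-invariant open subset of the affine variety $W$ which still contains $G\cdot x$ (indeed $g'(x)\in g(W)$ since $(g^{-1}g')(x)\in G\cdot x\subseteq W$). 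Since $W\smallsetminus U$ is closed and disjoint from the finitely many points of $G\cdot x$, prime avoidance yields $f\in A(W)$ vanishing on $W\smallsetminus U$ and at no point of $G\cdot x$, whence $G\cdot x\subseteq W_{f}\subseteq U$. Now $h:=\prod_{g\in G} g^{*}(f|_{U})\in A(U)$ is $G$-invariant, satisfies $h(x)=\prod_{g} f(g(x))\neq 0$, and $U_{h}$ is a principal (hence affine) open of $U$ that is $G$-invariant and contains $x$. Letting $x$ vary and using quasi-compactness, we get $X=U_{1}\cup\cdots\cup U_{m}$ with each $U_{i}$ a $G$-invariant affine open.

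By Proposition~\ref{Proposition: Quotient variety affine}, each $U_{i}/G$ is an affine variety with coordinate ring $A(U_{i})^{G}$ and quotient map $\pi_{i}$. Covering each $U_{i}\cap U_{j}$ again by $G$-invariant affine opens, the localizations of $A(U_{i})^{G}$ and of $A(U_{j})^{G}$ at the corresponding invariant functions are canonically identified, so the open subsets $\pi_{i}(U_{i}\cap U_{j})\subseteq U_{i}/G$ glue the $U_{i}/G$ into a variety $Y$ with a morphism $\pi\colon X\to Y$ restricting to $\pi_{i}$ on $U_{i}$. The universal property is then local: a morphism $\rho\colon X\to Z$ is constant on $G$-orbits if and only if each $\rho|_{U_{i}}$ factors through $\pi_{i}$, if and only if $\rho$ factors uniquely through $\pi$; hence $Y=X/G$ and $\pi$ is the quotient. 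Moreover $\pi$ is finite, since locally it is $\mathrm{Spec}\,A(U_{i})\to\mathrm{Spec}\,A(U_{i})^{G}$ and every $p\in A(U_{i})$ satisfies the monic equation $\prod_{g\in G}(Y-g(p))=0$ over $A(U_{i})^{G}$; in particular $\pi$ is proper and surjective, so $Y$ is complete.

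It remains to see that $Y$ is projective. Fix an ample line bundle $L$ on $X$; then $\bigotimes_{g\in G} g^{*}L$ is again ample and $G$-invariant, and after passing to a suitable tensor power we may assume $L$ carries a $G$-linearization. The graded ring $S:=\bigoplus_{m\geq 0} H^{0}(X,L^{\otimes m})$ is finitely generated with $X=\Proj S$, its ring of invariants $S^{G}$ is finitely generated (finiteness of invariants of a finite group in characteristic zero), and one checks $Y\cong\Proj(S^{G})$, a projective variety; alternatively one may invoke that the image of a projective variety under a finite morphism is projective. The main obstacle is the first step — constructing a $G$-invariant affine open neighbourhood of each orbit, which is exactly where the orbit hypothesis is used and is the heart of the argument — together with, on the projectivity side, producing a $G$-linearized ample bundle (or citing the finite-image fact); the gluing and the universal-property bookkeeping are routine once the affine case is in hand.
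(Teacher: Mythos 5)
The paper does not give its own argument here---it simply cites Serre \cite{Serre1}---and your proof reconstructs exactly the standard argument found in that reference: a $G$-invariant affine open cover obtained from the orbit hypothesis (automatic for projective $X$, as you note), gluing of the affine quotients of Proposition \ref{Proposition: Quotient variety affine}, and projectivity via $\Proj(S^{G})$ for a $G$-linearized ample bundle; this is essentially correct. The only caveat is your parenthetical alternative ``the image of a projective variety under a finite morphism is projective,'' which should not be leaned on as a black box (descending projectivity along a finite surjection requires producing a line bundle on the target, e.g.\ by a norm construction), but your primary $\Proj(S^{G})$ route already settles projectivity.
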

\begin{proof}
See \cite[Section 12, Proposition 19]{Serre1}.
\end{proof}

\begin{proposition}\label{Prop:Preliminaries} Let $X$ be a projective variety and $G \subset Aut(X)$ be a finite group.
If the quotient variety $X/G$ exists, then $\pi: X \to X/G$ is a Galois covering with group $G$.
\end{proposition}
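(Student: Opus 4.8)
The plan is to check the three conditions encoded in the definitions: that $\pi$ is a finite morphism (so that it is a covering), that $G$ is contained in the deck group $Aut(\pi)$, and that $Aut(\pi)$ acts transitively on the fibres of $\pi$; in fact I would prove the sharper statement $Aut(\pi)=G$. The basic input is the description of $\pi$ recorded in the Remark above: for $x,y\in X$ one has $\pi(x)=\pi(y)$ if and only if $y=g(x)$ for some $g\in G$. Equivalently, the fibres of $\pi$ are exactly the $G$-orbits on $X$, and in particular they are finite since $G$ is.

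First I would show that $\pi$ is finite. Since $X$ is projective, every orbit $Gx$ is a finite set of points and hence lies in an affine open subset of $X$; so by Proposition~\ref{Proposition: Quotient variety projective} the quotient $X/G$ is a projective variety, $\pi$ is a morphism of projective varieties, and thus $\pi$ is proper. A proper morphism with finite fibres is finite, and $\pi$ is surjective by the definition of the quotient, so $\pi:X\to X/G$ is a covering. (One can also see this locally: on a $G$-stable affine chart of $X$ with coordinate ring $A$ the quotient has coordinate ring $A^{G}$, over which each coordinate $x_i$ is integral, being a root of $\prod_{g\in G}\bigl(Y-g(x_i)\bigr)$.)

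Next, $G\subseteq Aut(\pi)$. Each $g\in G$ is an automorphism of $X$ with $\pi\circ g=\pi$ — this follows from the universal property of the quotient applied to $\pi$ itself, which satisfies $\pi(x)=\pi(g'(x))$ for all $g'\in G$, or directly from the description of the fibres — so $g$ commutes with $\pi$ and lies in $Aut(\pi)$; as $G\subset Aut(X)$, this exhibits $G$ as a subgroup of $Aut(\pi)$. Since $G$ already acts transitively on every fibre $\pi^{-1}(\pi(x))=Gx$, a fortiori $Aut(\pi)$ acts transitively on each fibre, so $\pi$ is a Galois covering.

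It remains to rule out extra deck transformations, i.e. to prove $Aut(\pi)\subseteq G$; this is the delicate step. I would pass to function fields. The $G$-action on $X$ is faithful (as $G\subset Aut(X)$), hence so is the induced action on $k(X)$; moreover $k(X/G)=k(X)^{G}$, because on a $G$-stable affine chart the quotient is the spectrum of the invariant ring (Proposition~\ref{Proposition: Quotient variety affine}). By Galois theory, $k(X)/k(X)^{G}$ is Galois with group $G$. Given $\sigma\in Aut(\pi)$, the induced field automorphism $\sigma^{*}$ of $k(X)$ fixes $k(X/G)=k(X)^{G}$, so $\sigma^{*}$ coincides with the action of some $g\in G$; and since an automorphism of a (separated) variety is determined by the field automorphism it induces, $\sigma=g\in G$. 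Therefore $Aut(\pi)=G$, and $\pi$ is a Galois covering with group $G$. The main obstacle is exactly this last inclusion: one must know that the quotient acquires no automorphisms over $X/G$ beyond those coming from $G$, which rests on the fact that $k(X)/k(X)^{G}$ is Galois with group \emph{exactly} $G$ — and it is here that the faithfulness of the $G$-action, built into the hypothesis $G\subset Aut(X)$, is essential.
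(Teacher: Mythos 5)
Your argument is correct, but the crucial step --- showing $Aut(\pi)\subseteq G$ --- is carried out by a genuinely different route than the paper's. The paper argues pointwise: for $f\in Aut(\pi)$ and each $x\in X$, the fibre description $\pi^{-1}(\pi(x))=Gx$ gives some $g_i\in G$ with $f(x)=g_i(x)$, so $X$ is the finite union of the closed agreement loci $V(f-g_1)\cup\cdots\cup V(f-g_n)$; irreducibility of $X$ then forces $f=g_i$ globally. You instead pass to function fields: $k(X/G)=k(X)^{G}$, Artin's lemma makes $k(X)/k(X)^{G}$ Galois with group exactly $G$ (using faithfulness of the action), and any deck transformation, fixing $k(X)^{G}$, must induce the same field automorphism as some $g\in G$ and hence equal it. Both arguments ultimately rest on irreducibility (explicitly in the paper, implicitly in your appeal to function fields and to the fact that an automorphism of a separated variety is determined by the induced field map). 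The paper's argument is more elementary and self-contained --- it needs nothing beyond the fibre description and the decomposition into closed sets --- whereas yours imports Artin's theorem and the identification $\operatorname{Frac}(A)^{G}=\operatorname{Frac}(A^{G})$ on a $G$-stable affine chart, but in exchange it packages the statement in the standard Galois-theoretic framework and makes transparent why no extra deck transformations can appear. You also take care to verify that $\pi$ is actually a finite morphism (via properness plus finite fibres, or integrality of $A$ over $A^{G}$), a point the paper leaves implicit; that is a worthwhile addition rather than a divergence.
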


\begin{proof} Set $G = \{g_{1},\hdots, g_{n},id\}$. The group $Aut(\pi)$ consists of all automorphisms of $X$ commuting with $\pi$. If $f:X \to X$ belongs to $Aut(\pi)$, then for all $x \in X$ we have $\pi(f(x)) = \pi(x)$. For any $x \in X$, there exists $g_{i} \in G$ such that $f(x) = g_{i}(x)$, and hence $X = V(f-g_{1}) \cup \cdots \cup V(f-g_{n})$. The irreducibility of $X$ allows us to conclude that $f = g_{i}$, for some $g_{i} \in G$. Therefore, $Aut(\pi) = G$ and it is clear that given $\pi(x) \in X/G$, the fibre $\pi^{-1}(\pi(x)) = G_{x}$, so $Aut(\pi) = G$ acts transitively on $\pi^{-1}(\pi(x))$.
\end{proof}

A finite group of automorphisms of the affine space $\mathbb A^{n+1}$ can be regarded as a finite  group $G \subset GL(n+1,k)$ acting on R. Let $\{f_{1},\hdots, f_{t}\}$ be a basis of $R^{G}$, also called a set of {\em fundamental invariants of $G$},  and let $k[w_{1},\hdots, w_{t}]$ be  the polynomial ring in the new variables $w_{1},\hdots, w_{t}$.  We denote by $syz(f_1,\hdots,f_t)$ the kernel of the morphism from  $A^{n+1}$ to $A^{t}$ defined by $w_{i} \to f_{i}$, $i = 1,\hdots, t$. We have:

\begin{proposition}\label{Proposition: qv by fg acting linearly on poly ring} Let $G \subset GL(n+1,k)$ be a finite group acting on $\mathbb{A}^{n+1}$, let $\{f_1,\hdots,f_t\}$ be a set of fundamental invariants of $G$ and let $\pi: \af^{n+1} \to \af^{t}$ be the morphism defined by $(f_1,\hdots,f_t)$. Then,
\begin{itemize}
\item[(i)] $\pi(\af^{n+1})$ is the quotient of $\mathbb{A}^{n+1}$ by $G$ with affine coordinate ring $R^{G}$.
\item[(ii)] $R^{G} \cong k[w_{1},\hdots, w_{t}]/syz(f_{1},\hdots, f_{t})$, i.e., $I(\pi (\mathbb{A}^{n+1}))=syz(f_{1},\hdots, f_{t}). $
\item[(iii)] $\pi$ is a Galois covering of $\pi(\af^{n+1})$ with group $G$.
\end{itemize}
\end{proposition}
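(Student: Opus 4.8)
The plan is to obtain all three statements from the propositions recalled above, the essential input being that $R$ is a finite module over $R^{G}$. I would record this fact first: since $G$ is finite, every $p\in R$ is a root of the monic polynomial $\prod_{g\in G}(Y-g(p))\in R^{G}[Y]$, so $R$ is integral over $R^{G}$; being moreover a finitely generated $k$-algebra, $R$ is then a finitely generated $R^{G}$-module.

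For (i), I would factor $\pi$ as $\af^{n+1}\xrightarrow{\pi_{0}}\operatorname{Spec}(R^{G})\hookrightarrow\af^{t}$, where the closed immersion comes from the presentation $R^{G}=k[f_{1},\dots,f_{t}]$ and $\pi_{0}$ is the morphism dual to the inclusion $R^{G}\subset R$. Module-finiteness makes $\pi_{0}$ a finite morphism; being dual to an injection it is also dominant, hence surjective and closed. Therefore $\pi(\af^{n+1})=\operatorname{Spec}(R^{G})$ is a closed affine subvariety of $\af^{t}$ whose coordinate ring is $R^{G}$, and Proposition~\ref{Proposition: Quotient variety affine} identifies it with the quotient $\af^{n+1}/G$ and $\pi_{0}$ with the quotient morphism. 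For (ii), I would note that $\pi$ is dual to the $k$-algebra homomorphism $\psi\colon k[w_{1},\dots,w_{t}]\to R$, $w_{i}\mapsto f_{i}$, whose kernel is by definition $syz(f_{1},\dots,f_{t})$ and whose image is $k[f_{1},\dots,f_{t}]=R^{G}$; since a polynomial vanishes on $\pi(\af^{n+1})$ exactly when it is sent to $0$ by $\psi$, we get $I(\pi(\af^{n+1}))=syz(f_{1},\dots,f_{t})$, and hence $k[w_{1},\dots,w_{t}]/syz(f_{1},\dots,f_{t})\cong R^{G}$ (this quotient has no nilpotents, as $R^{G}$ is a subring of the domain $R$).

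For (iii), once (i) guarantees that $\af^{n+1}/G$ exists with quotient morphism $\pi$, I would rerun the argument from the proof of Proposition~\ref{Prop:Preliminaries}: it uses only the irreducibility of the source and the fibre description $\pi^{-1}(\pi(x))=Gx$, both available here, to conclude that $\operatorname{Aut}(\pi)=G$ and that $G$ acts transitively on each fibre. Combined with the finiteness of $\pi$ established in (i), this says precisely that $\pi$ is a Galois covering of $\pi(\af^{n+1})$ with group $G$. I expect the only point requiring genuine care rather than bookkeeping to be the closedness of $\pi(\af^{n+1})$ in (i) — that the image really is all of $\operatorname{Spec}(R^{G})$ and not merely a dense constructible subset — which is exactly where the finiteness of $G$ (via module-finiteness of $R$ over $R^{G}$) enters; everything else follows by direct appeal to the cited results.
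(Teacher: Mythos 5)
Your proof is correct and follows the same route the paper intends: the paper's proof is only a pointer to Stanley's Section~6 together with Propositions~\ref{Proposition: Quotient variety affine} and~\ref{Prop:Preliminaries}, and your argument (integrality of $R$ over $R^{G}$, the factorization through $\operatorname{Spec}(R^{G})$, and rerunning the deck-transformation argument) is exactly the content those citations are meant to supply.
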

\begin{proof} See \cite[Section 6]{Stanley}, Proposition \ref{Proposition: Quotient variety affine} and Proposition \ref{Prop:Preliminaries}.
\end{proof}

The cardinality of a general orbit $G(a)$, $a \in \af^{n+1}$, is called the degree of the covering. Moreover, if we can find a homogeneous set of fundamental invariants $\{f_1,\hdots, f_t\}$ of $G$ such that $\pi: \PP^{n} \to \PP^{t-1}$ is a morphism, then  the projective version of Proposition \ref{Proposition: qv by fg acting linearly on poly ring} is true.

\subsection{Lefschetz properties and Togliatti systems}\label{Lefschetz properties and Togliatti systems}
Let $I\subset R $ be a homogeneous artinian ideal.  The {\em weak Lefschetz property} (WLP for short) is an important property of these ideals, which has attracted much interest in the last years, see for instance \cite{BK}, \cite{LlibreWatanabe}, \cite{MM-RO}, \cite{Migliore-MR-Nagel}, \cite{Migliore-MR-Nagel1} and \cite{Migliore-Nagell}. We recall the definition. We say that $I$ {\em has the WLP}
if there is a linear form $L \in R_1$ such that, for all
integers $j$, the multiplication map
\[
\times L: (R/I)_{j} \to (R/I)_{j+1}
\]
has maximal rank. We say that $I$ {\em fails the WLP in degree $j_0$} if for any linear form $L \in R_{1}$, the multiplication map $\times L: (R/I)_{j_0} \to (R/I)_{j_{0}+1}$ has not  maximal rank. In 2013 \cite{MM-RO}, Mezzetti, Mir\'{o}-Roig and Ottaviani established a close connection between algebraic and geometric language showing that the failure of the WLP for  ideals generated by forms of the same degree    is related to the existence of varieties whose all osculating spaces of a certain order  have dimension less than  expected.  To state the precise statement, we shortly recall the definition of   the Macaulay's inverse system $I^{-1}$ of  $I$ and the language of osculating spaces and Laplace equations. 

In addition to $R$, we consider a second polynomial ring $\mathcal{R}=k[X_0,\ldots,X_n]$. We have the apolarity action of $R$ on $\mathcal{R}$ by partial differentiation, i.e., if $F\in R$ and $h\in \mathcal{R}$, then $F\cdot h=F(\frac{\partial}{\partial X_0}, \ldots,\frac{\partial}{\partial X_n})\circ h$.  By definition, the Macaulay inverse system  $I^{-1}$ of a graded ideal $I\subset R$ is the graded $R$-submodule of $\mathcal{R}$ annihilator of $I$: $I^{-1}=\{h\in\mathcal{R}\mid F\cdot h=0 \  \hbox{for all}\  F\in I\}$.
On the geometric side, we recall that, if $X$ is a rational projective variety with a birational parameterization
$\PP^n \dashrightarrow X \subset \PP^{r-1}$ given by $r$ forms $F_1, \ldots, F_r$ of degree $d$ in
$R$, then
the projective $s$th osculating space $\TC_x^{(s)}X$, for $x$ general, is generated by the $s$-th partial derivatives
of $F_1, \cdots ,F_r$ at the point $x$.  The expected dimension of $\TC_x^{(s)}X$
is $max\{r-1, \binom{n+s}{s}-1\}$, but
it could be lower. If strict inequality
holds for all smooth points of $X$, and $\dim
\TC_x^{(s)}X=\binom{n+s}{s}-1-\delta$ for general $x$,
then $X$ is said to satisfy $\delta$ Laplace equations of order $s$.
Indeed, in this case the partials of order $s$ of $F_1,\ldots,F_r$
are linearly dependent, which gives $\delta$ differential equations
of order $s$ satisfied by $F_1,\ldots,F_r$.

In \cite{MM-RO}  the following theorem is proved.

\begin{theorem} \label{tea} Let $I\subset R=k[x_0\ldots, x_n]$ be an artinian
ideal
generated
by $r$ forms $F_1,\dotsc,F_{r}$ of degree $d$ and let $I^{-1}$ be its Macaulay inverse system.
If
$r\le \binom{n+d-1}{n-1}$, then
  the following conditions are equivalent.
\begin{itemize}
\item[(i)] $I$ fails the WLP in degree $d-1$;
\item[(ii)]  $F_1,\dotsc,F_{r}$ become
$k$-linearly dependent on a general hyperplane $H$ of $\PP^n$;
\item[(iii)] The $n$-dimensional variety $Y:=\overline{\varphi(\PP^{n})}$,
where
$\varphi=\varphi_{I^{-1}} \colon\PP^n \dashrightarrow \PP^{\binom{n+d}{d}-r-1}$ is the  rational map associated to $(I^{-1})_d$,
  satisfies at least one Laplace equation of order
$d-1$.
\end{itemize}
\end{theorem}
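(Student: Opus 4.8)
The plan is to establish (i)$\Leftrightarrow$(ii) by a rank computation together with a semicontinuity argument, and (ii)$\Leftrightarrow$(iii) through Macaulay's apolarity duality. Throughout we take $F_1,\dots,F_r$ to be linearly independent (this is implicit in the statement, since otherwise the target of $\varphi_{I^{-1}}$ would not be $\PP^{\binom{n+d}{d}-r-1}$), so that $\dim_k I_d=r$ and, under the apolarity pairing $R_d\times\mathcal R_d\to k$, the space $W:=(I^{-1})_d=I_d^{\perp}\subset\mathcal R_d$ has dimension $s:=\binom{n+d}{n}-r$; fix a basis $G_1,\dots,G_s$ of $W$ defining $\varphi_{I^{-1}}$.

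For (i)$\Leftrightarrow$(ii): since $I$ is generated in degree $d$, $(R/I)_{d-1}=R_{d-1}$ has dimension $\binom{n+d-1}{n}$ while $(R/I)_d$ has dimension $\binom{n+d}{n}-r$, and the hypothesis $r\le\binom{n+d-1}{n-1}=\binom{n+d}{n}-\binom{n+d-1}{n}$ is exactly the inequality $\dim(R/I)_{d-1}\le\dim(R/I)_d$. Hence for any $L\in R_1$ the map $\times L\colon(R/I)_{d-1}\to(R/I)_d$ has maximal rank if and only if it is injective, if and only if there is no nonzero $G\in R_{d-1}$ with $LG\in I_d$; since the $F_i$ are independent this last condition is equivalent to a relation $\sum c_iF_i=LG$ with $(c_i)\ne 0$, i.e. to $\overline{F_1},\dots,\overline{F_r}$ being $k$-linearly dependent on $H=V(L)$. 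Thus for each individual $L$, the failure of maximal rank of $\times L$ is equivalent to the $F_i$ becoming dependent on $V(L)$. Since having maximal rank is a Zariski-open condition on $[L]\in\PP(R_1)$ (it is the nonvanishing of the maximal minors of the matrix of $\times L$), it holds for some $L$ if and only if it holds for a general $L$; negating, it fails for every $L$ --- statement (i) --- if and only if it fails for a general $L$, which by the above is statement (ii).

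For (ii)$\Leftrightarrow$(iii) I would rewrite both sides as the statement that, for a general hyperplane $U\subset\mathcal R_1$, one has $\dim_k(W\cap\mathrm{Sym}^dU)>\binom{n+d-1}{n-1}-r$; note that $r\le\binom{n+d-1}{n-1}$ is precisely what makes $\binom{n+d-1}{n-1}-r$ the expected value of this intersection and, equivalently, makes the expected order-$(d-1)$ osculating dimension $\binom{n+d-1}{n}-1$ no larger than the ambient $s-1$. On the (ii) side: the $F_i$ become dependent on $V(L)$ if and only if $I_d\cap LR_{d-1}\ne 0$; passing to perpendiculars and using that the apolarity action of $R$ on $\mathcal R$ is compatible with multiplication in $R$, one gets $(LR_{d-1})^{\perp}=\{h\in\mathcal R_d: L\cdot h=0\}=\mathrm{Sym}^dU_L$, where $U_L=\{m\in\mathcal R_1:L\cdot m=0\}$ is a hyperplane of $\mathcal R_1$; hence (ii) for $V(L)$ holds if and only if $W+\mathrm{Sym}^dU_L\ne\mathcal R_d$, i.e. $\dim_k(W\cap\mathrm{Sym}^dU_L)>\binom{n+d-1}{n-1}-r$. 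On the (iii) side: for general $p$ the vectors $\big(\partial^{\alpha}G_1(p),\dots,\partial^{\alpha}G_s(p)\big)$ with $|\alpha|\le d-1$ span a lift of $\TC^{(d-1)}_{\varphi(p)}Y$, and a coefficient vector $(c_j)$ annihilates all of them exactly when $G:=\sum c_jG_j$ vanishes to order $\ge d$ at $p$; a degree-$d$ form vanishes to order $\ge d$ at $p$ if and only if it belongs to $\mathrm{Sym}^dV_p$, where $V_p=\{m\in\mathcal R_1:m(p)=0\}$ is again a hyperplane of $\mathcal R_1$. Therefore $\dim\TC^{(d-1)}_{\varphi(p)}Y=s-1-\dim_k(W\cap\mathrm{Sym}^dV_p)$, and (by lower semicontinuity of the osculating dimension, a general point realizes its maximum) $Y$ satisfies at least one Laplace equation of order $d-1$ if and only if $\dim_k(W\cap\mathrm{Sym}^dV_p)>\binom{n+d-1}{n-1}-r$ for general $p$. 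Finally $\{U_L:L\in R_1\}$ and $\{V_p:p\in\PP^n\}$ are the same family --- all hyperplanes of $\mathcal R_1$ --- so a general $U_L$ and a general $V_p$ satisfy the same condition, giving (ii)$\Leftrightarrow$(iii).

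The main obstacle is the (ii)$\Leftrightarrow$(iii) step, and within it the apolarity bookkeeping: identifying $(LR_{d-1})^{\perp}$ with $\mathrm{Sym}^dU_L$ and the order-$(d-1)$ osculating condition at $p$ with membership in $\mathrm{Sym}^dV_p$, making sure every dimension count uses the hypothesis $r\le\binom{n+d-1}{n-1}$ in the right place, and verifying the geometric technicalities --- that $\dim Y=n$, that the span-of-derivatives description of $\TC^{(d-1)}_{\varphi(p)}Y$ is independent of the affine chart and of the chosen homogeneous lift of $\varphi_{I^{-1}}$, and that it suffices to test a general point. The semicontinuity argument in (i)$\Leftrightarrow$(ii) and the identification of the two families of hyperplanes of $\mathcal R_1$ are comparatively routine.
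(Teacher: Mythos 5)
The paper does not actually prove Theorem \ref{tea}; it only cites \cite[Theorem 3.2]{MM-RO}. Your argument is correct and reconstructs essentially the proof given there: the injectivity-of-$\times L$/Zariski-openness argument for (i)$\Leftrightarrow$(ii), and for (ii)$\Leftrightarrow$(iii) the apolarity identification of $(LR_{d-1})^{\perp}$ and of the order-$(d-1)$ osculating condition at $p$ with the two (coinciding) families of subspaces $\mathrm{Sym}^{d}U_{L}$ and $\mathrm{Sym}^{d}V_{p}$, with the hypothesis $r\le\binom{n+d-1}{n-1}$ entering exactly where you place it.
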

\begin{proof} See \cite[Theorem 3.2]{MM-RO}.
\end{proof}

An artinian ideal $I \subset R$ generated by $r \leq \binom{d+n-1}{n-1}$ forms of degree $d$ defines a \emph{Togliatti system}
if it satisfies the three equivalent conditions in  Theorem \ref{tea}.  In particular, a Togliatti system is called {\em smooth} if the variety $Y$ in Theorem \ref{tea}(iii) is smooth, and {\em monomial} if $I$ can be generated by monomials. The name is in honour of  Eugenio Togliatti, who proved that for
$n = 2$ the only smooth
Togliatti system of cubics is the monomial ideal 
\begin{equation}\label{3togliatti} I = (x_0^3,x_1^3,x_2^3,x_0x_1x_2)\subset k[x_0,x_1,x_2]\end{equation}
(see \cite{BK}, \cite{MM-R}, \cite{T1} and \cite{T2}). The corresponding variety $Y$, parameterized by  $(I^{-1})_3$, is a smooth surface in $\mathbb P^5$, known as Togliatti surface; its $2$-osculating spaces have all dimension $\leq 4$ instead of the expected dimension $5$. The systematic study of Togliatti systems $I$ was initiated in \cite{MM-RO}, where one can find in particular a classification  of monomial Togliatti systems with \lq\lq\thinspace low'' number of generators;   for further results the reader can see \cite{AMRV}, \cite{MM-R1}, \cite{MM-R}, \cite{MkM-R} and \cite{M-RS}. In  \cite{MM-R} the authors introduced the notion of Galois-Togliatti system (shortly \emph{GT-system}), which we recall now.

\begin{definition} \rm \label{Defi:GT}
  A \emph{GT-system} is a Togliatti system $I_{d} \subset R$ generated by $r$ forms $F_{1},\dotsc,F_{r}$ of degree $d$
  such that the morphism $\varphi_{I_{d}}\colon\PP^{n}\rightarrow \PP^{r-1}$ defined by $(F_{1},\dotsc,F_{r})$ is a Galois covering with cyclic group $\ZZ /d\ZZ$.
\end{definition}

In the sequel, the image of the morphism $\varphi_{I_d}$ will be denoted by $X_d$. The varieties $X_d$ and $Y$, introduced in Theorem \ref{tea} are called apolar. The first example of GT-system is the ideal (\ref{3togliatti}). The corresponding pair of apolar varieties is formed by the Togliatti surface $Y\subset\mathbb P^5$ and the cubic surface $X_3\subset\mathbb P^3$.

\begin{example} \rm Fix integers $n = 2$, $d = 5$, fix $e$ a $5$th primitive root of $1$ and let $\Lambda = \langle diag(1,e,e^{3}) \rangle \subset GL(3,k)$ be a cyclic group of order $5$. The homogeneous component of degree $5$ of  $R^{\Lambda}$ is generated by the invariant monomials $x_{0}^5,x_{1}^{5}, x_{2}^{5}, x_{0}^{2}x_{1}^{2}x_{2}, x_{0}x_{1}x_{2}^{3}$. In total we have $r = 5$ monomials so the inequality $r \leq \binom{n+d-1}{n-1}$ is satisfied. One proves that the ideal $I_{5} \subset R$ generated by these monomials fails the WLP in degree $4$ and the morphism $\varphi_{I_{5}}: \PP^{2} \to \PP^{4}$ is a Galois covering of degree $5$ with cyclic group $\ZZ/5\ZZ$ (see Corollary \ref{Corollary: GT-systems}). Actually $\varphi_{I_{5}}(\PP^{2})$ is the quotient surface by the action of the finite group of automorphisms of $\PP^{2}$ generated by $diag(1,e,e^3)$.
\end{example}

In the following, we will study GT-systems $I_d$ generated by forms of degree $d$ which are invariants of a finite diagonal cyclic subgroup of $GL(n+1,K)$ of order $d$. Note that Definition \ref{Defi:GT} does not assume that the ideal is monomial. For examples of non-monomial Togliatti systems, the reader can look at \cite{CMM-RS1}. However, the Togliatti systems we will study in Sections \ref{the aCM of GT-varieties}, \ref{GT-surfaces} and \ref{aCM togliatti surfaces} are all monomial.

\section{The arithmetic Cohen-Macaulayness of GT-varieties.}
\label{the aCM of GT-varieties}

In this section, we study the ideals generated by all monomials $\{m_{1},\hdots, m_{\mu_{d}}\}$ of degree $d$ which are invariants of a finite diagonal cyclic group $\Lambda \subset GL(n+1,k)$ of order $d$. They are monomial GT-systems, provided $\mu_{d} \leq \binom{d+n-1}{n-1}$. We study the varieties  associated to them, which we call $GT$-varieties with group $\Lambda$; in particular we prove that they are aCM.

To this end, we fix integers $2 \leq n < d$ and $0 \leq \alpha_0 \leq \cdots \leq \alpha_n < d$ with $GCD(\alpha_0,\hdots,\alpha_n,d) = 1$. We denote by $M_{d;\alpha_0,\hdots, \alpha_n}$ the diagonal matrix $diag(e^{\alpha_0},\hdots, e^{\alpha_n})$, where  $e$ is a $d$th primitive root of $1$. We consider the cyclic group $\Lambda = \langle M_{d;\alpha_0,\hdots,\alpha_n} \rangle \subset GL(n+1,k)$ of order $d,$ and the abelian group $\overline{\Lambda} \subset GL(n+1,k)$  of order $d^2$ generated by $M_{d;\alpha_0,\hdots,\alpha_n}$ and $M_{d;1,\hdots,1} = diag(e,\hdots,e)$.
As usual $R^{\Lambda}$ (respectively $R^{\overline{\Lambda}}$) represents the ring of invariants of $\Lambda$ (respectively $\overline{\Lambda}$).
Let $\{m_{1},\hdots, m_{\mu_{d}}\}$ be the set of all monomials of degree $d$ which are invariants of $\Lambda$ and  denote by $I_{d}$ the monomial artinian ideal generated by them. Let $\varphi_{I_{d}}: \PP^{n} \to \PP^{\mu_{d}-1}$ be the morphism associated to $I_{d}$ and define $X_{d}:=\varphi_{I_{d}}(\PP^{n})$. Let $w_{1},\hdots,w_{\mu_{d}}$ be a new set of indeterminates, let $S: = k[w_1,\hdots,w_{\mu_{d}}]$ denote the polynomial ring and $I(X_{d}) \subset S$  the homogeneous ideal of $X_{d}$.

Our first result shows that $\{m_{1},\hdots,m_{\mu_{d}}\}$ is a $k$-algebra basis of $R^{\overline{\Lambda}}$, i.e., $R^{\overline{\Lambda}} = k[m_1,\hdots, m_{\mu_{d}}]$. This will allow us to prove that any variety $X_{d}$ is aCM and that $I_{d}$ is a monomial $GT$-system, provided $\mu_{d} \leq \binom{d+n-1}{n-1}$.

\begin{theorem} \label{Theorem: invariant basis of GT-systems} The set of monomials of degree $d$ which are invariants of $\Lambda$ is a $k$-algebra basis of $R^{\overline{\Lambda}}$.
\end{theorem}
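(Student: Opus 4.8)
The plan is to show two inclusions. The easy one is $k[m_1,\hdots,m_{\mu_d}] \subseteq R^{\overline{\Lambda}}$: each $m_i$ has degree $d$, so it is automatically invariant under $M_{d;1,\hdots,1}$ (which scales degree-$d$ forms by $e^d = 1$), and by construction it is invariant under $M_{d;\alpha_0,\hdots,\alpha_n}$; hence every $m_i \in R^{\overline{\Lambda}}$ and so is any polynomial in them. The content of the theorem is the reverse inclusion. Since $R^{\overline{\Lambda}}$ is generated by monomials, and a monomial $X^h$ lies in $R^{\overline{\Lambda}}$ precisely when it is $\Lambda$-invariant \emph{and} has degree divisible by $d$ (the second condition coming from invariance under $\mathrm{diag}(e,\hdots,e)$), it suffices to prove the following: \emph{every monomial of degree $\ell d$ (for $\ell \geq 1$) which is an invariant of $\Lambda$ can be written as a product of $\ell$ monomials of degree $d$, each of which is an invariant of $\Lambda$.}

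First I would reduce to $\ell = 2$ by induction: if every $\Lambda$-invariant monomial of degree $2d$ factors as a product of two $\Lambda$-invariant monomials of degree $d$, then one peels off degree-$d$ invariant factors successively — though one must be slightly careful, since after removing one degree-$d$ invariant factor the remaining monomial has degree $(\ell-1)d$ and is still $\Lambda$-invariant, so induction applies. So the crux is: given a $\Lambda$-invariant monomial $X^h = x_0^{a_0}\cdots x_n^{a_n}$ with $\sum a_i = 2d$, find a sub-monomial $X^{h'}$ with $h' \leq h$ (componentwise), $\sum h'_i = d$, and $X^{h'}$ invariant under $\Lambda$; then $X^{h-h'}$ automatically has degree $d$ and, since $\sum (h-h')_i \cdot \alpha_i \equiv \sum h_i \alpha_i - \sum h'_i \alpha_i \equiv 0 \pmod d$, it is $\Lambda$-invariant as well.

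This is exactly where the Erd\H{o}s--Ginzburg--Ziv theorem enters, as the introduction hints. Think of the monomial $X^h$ of degree $2d$ as a sequence of $2d$ elements of $\ZZ/d\ZZ$: namely the exponent $a_i$ contributes $a_i$ copies of the residue $\alpha_i \bmod d$. The condition that $X^h$ is $\Lambda$-invariant says the sum of all $2d$ of these residues is $\equiv 0 \pmod d$. By EGZ, among any $2d-1$ (a fortiori $2d$) elements of $\ZZ/d\ZZ$ there is a sub-collection of exactly $d$ of them summing to $0 \pmod d$. Choosing such a sub-collection and letting $h'_i$ be the number of chosen copies of residue $\alpha_i$, we get $h' \leq h$, $\sum h'_i = d$, and $\sum h'_i \alpha_i \equiv 0 \pmod d$, i.e. $X^{h'}$ is a degree-$d$ $\Lambda$-invariant monomial dividing $X^h$. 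That completes the factorization for $\ell = 2$, and the induction finishes the proof.

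\textbf{Main obstacle.} The only genuinely non-formal ingredient is recognizing that EGZ supplies exactly the combinatorial statement needed and setting up the translation between monomials and multisets of residues correctly; everything else (the degree bookkeeping, the easy inclusion, and the inductive peeling-off) is routine. One subtlety worth stating carefully in the write-up is that $\overline{\Lambda}$-invariance of a monomial is equivalent to the conjunction ``$\Lambda$-invariant'' $+$ ``degree $\equiv 0 \pmod d$'', which is what lets us restrict attention to degrees that are multiples of $d$ in the first place.
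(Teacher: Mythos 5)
Your proposal is correct and is essentially the paper's own proof: the paper likewise runs an induction on $t$, encodes a $\Lambda$-invariant monomial of degree $td$ as a multiset of $td$ residues $\alpha_0,\hdots,\alpha_n$ counted with multiplicities $a_0,\hdots,a_n$, and invokes Erd\H{o}s--Ginzburg--Ziv to extract $d$ of them summing to $0 \pmod d$, i.e.\ a degree-$d$ $\Lambda$-invariant divisor whose complementary factor is again invariant of degree $(t-1)d$. The only thing to tighten in your write-up is the phrase ``reduce to $\ell=2$'': the literal $\ell=2$ statement alone would not imply the general case, but this is harmless because your EGZ extraction applies verbatim to any degree $\ell d \geq 2d > 2d-1$, which is exactly how the paper phrases the inductive step.
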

\begin{proof}
We want to prove that $R^{\overline{\Lambda}} = k[m_1,\hdots,m_{\mu_{d}}]$. Since $\overline{\Lambda}$ acts diagonally on $R$, this is equivalent to show that for all $t \geq 1$, any monomial $m \in R^{\overline{\Lambda}}$ of degree $td$ belongs to $k[m_{1},\hdots,m_{\mu_{d}}]$, i.e., it is a product of $t$ monomials $m_{i_{1}},\hdots, m_{i_{t}} \in \langle m_{1},\hdots,m_{\mu_{d}}\rangle$, non necessarily different. We proceed by induction on $t$. We fix $t \geq 2$, we take a monomial $m = x_{0}^{a_0}x_{1}^{a_1}\cdots x_{n}^{a_n} \in R^{\Lambda}$  of degree $td$ and we consider  $\mathcal{S} := \{\alpha_{0}, \overset{a_0}{\hdots}, \alpha_{0}, \alpha_{1}, \overset{a_1}{\hdots}, \alpha_{1}, \hdots, \alpha_{n}, \overset{a_n}{\hdots}, \alpha_{n}\}$
 a sequence of integers where  $\alpha_0$ is repeated $a_0$ times, $\alpha_1$ is repeated $a_1$ times, and so on. Since $t \geq 2$,  \; $\mathcal{S}$ contains more than $2d-1$ elements. Hence by \cite[Theorem]{EGZ} and \cite{GG}, there exists a subsequence $\mathcal{S}' \subset \mathcal{S}$ of $d$ elements summing to a multiple $rd$ of $d$. We write $\mathcal{S'}= \{\alpha_{0}, \overset{b_0}{\hdots}, \alpha_{0}, \alpha_{1}, \overset{b_1}{\hdots}, \alpha_{1}, \hdots, \alpha_{n}, \overset{b_n}{\hdots}, \alpha_{n}\}$, and we consider the monomial $\overline{m} = x_{0}^{b_{0}}x_{1}^{b_{1}}\cdots x_{n}^{b_{n}} \in R$. Clearly $\overline{m}$ divides $m$. Moreover, $b_{0}+b_{1}+\cdots + b_{n} = d$ and $\alpha_0 b_0 + \alpha_1 b_1 + \cdots + \alpha_n b_n = rd$. Therefore, $\overline{m}$ is an invariant of $\Lambda $,  and $m/\overline{m} \in k[m_{1},\hdots,m_{\mu_{d}}]$ by induction hypothesis. So the proof is complete.
\end{proof}

\begin{example} \rm \label{Example: Togliatti exam aCM} We illustrate Theorem \ref{Theorem: invariant basis of GT-systems} with the example of ideal (\ref{3togliatti}).
Fix $n = 2$, $d = 3$ and let $\Lambda = \langle M_{3;0,1,2} \rangle \subset GL(3,k)$. A monomial
 $x_{0}^{a_{0}}x_{1}^{a_{1}}x_{2}^{a_{2}} \in R^{\overline{\Lambda}}$ if and only if there exist integers $t \geq 1$ and $r \in \{0,1,2,\hdots, 2t\}$ such that $(a_{0},a_{1},a_{2}) \in \ZZ_{\geq0}^{3}$ is a solution of the system
 $$(*)_{t,r} = \left\{\begin{array}{rcl}
 a_{0} + a_{1} + a_{2} &=& 3t\\
 a_{1} + 2a_{2} &=& 3r.
 \end{array}\right.$$
In particular, $\{x_{0}^{3}, x_{1}^{3}, x_{2}^{3}, x_{0}x_{1}x_{2}\}$ is the set of all monomials of degree $3$ in $R^{\Lambda}$. Fix $t > 1$ and let $m = x_{0}^{a_{0}}x_{1}^{a_{1}}x_{2}^{a_{2}} \in R^{\overline{\Lambda}}$ be a monomial of degree $3t$. First we assume that $a_{0}a_{1}a_{2} \neq 0$. We may also assume that $a_{0} = min\{a_{0},a_{1},a_{2}\}$, the other cases follow in the same way. Then clearly $m=(x_{0}x_{1}x_{2})^{a_{0}}x_{1}^{a_{1}-a_{0}}x_{2}^{a_{2}-a_{0}}$ and $x_{1}^{a_{1}-a_{0}}x_{2}^{a_{2}-a_{0}} \in R^{\overline{\Lambda}}$. So we have that $a_{1} - a_{0} + a_{2} - a_{0}$ and $a_{1} - a_{0} + 2(a_{2} - a_{0})$ are multiples of $3$, which implies  that $a_{1}-a_{0}$ and $a_{2}-a_{0}$ are multiples of $3$.
Now we assume $a_{0}a_{1}a_{2} = 0$. We may  suppose that $a_{0} = 0$ and $a_{1}a_{2} \neq 0$. We have that $a_{1} + a_{2}$ and $a_{1} + 2a_{2}$ are multiples of $3$, which gives that $a_{1}$ and $a_{2}$ are multiples of $3$.
\end{example}

\begin{theorem} \label{Theorem: GT-varieties are aCM} 
$X_{d}$ is a toric aCM variety.
\end{theorem}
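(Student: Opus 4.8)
The plan is to deduce Theorem~\ref{Theorem: GT-varieties are aCM} directly from Theorem~\ref{Theorem: invariant basis of GT-systems} together with the invariant-theoretic machinery recalled in Section~\ref{defs and prelim results}. First I would observe that $X_d = \varphi_{I_d}(\PP^n)$ is by construction the image of $\PP^n$ under the map given by all degree-$d$ monomials in $R^{\Lambda}$; since these monomials are $\ZZ$-linear combinations of lattice points, $X_d$ is a projective toric variety (its homogeneous coordinate ring is a semigroup ring), which takes care of the word ``toric''. The substantive point is arithmetic Cohen-Macaulayness, i.e. that the homogeneous coordinate ring $S/I(X_d)$ is Cohen-Macaulay.

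The key step is to identify $S/I(X_d)$ with $R^{\overline{\Lambda}}$ as graded rings. The group $\overline{\Lambda}$ contains $M_{d;1,\dots,1}=\mathrm{diag}(e,\dots,e)$, so every monomial of $R^{\overline{\Lambda}}$ has degree a multiple of $d$; grading $R^{\overline{\Lambda}}$ by (degree)$/d$, its degree-$t$ piece is spanned by the monomial invariants of $\Lambda$ of degree $td$. By Theorem~\ref{Theorem: invariant basis of GT-systems}, $R^{\overline{\Lambda}} = k[m_1,\dots,m_{\mu_d}]$, so the surjection $S = k[w_1,\dots,w_{\mu_d}] \twoheadrightarrow R^{\overline{\Lambda}}$, $w_i \mapsto m_i$, realizes $R^{\overline{\Lambda}}$ as a graded quotient $S/\mathfrak{a}$; its kernel $\mathfrak{a}$ is the toric (binomial, prime) ideal of relations among the $m_i$. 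On the other hand, $X_d$ is exactly the image of the rational/regular map $\PP^n \dashrightarrow \PP^{\mu_d-1}$ given by $(m_1,\dots,m_{\mu_d})$, whose homogeneous ideal is precisely this ideal of relations — concretely one checks $I(X_d) = \mathfrak{a}$ by noting both are the kernel of the same ring map into the function field, or invokes the projective version of Proposition~\ref{Proposition: qv by fg acting linearly on poly ring} applied to the homogeneous fundamental invariants $m_1,\dots,m_{\mu_d}$ of $\overline{\Lambda}$, which shows that $\PP^n \to X_d$ is the quotient of $\PP^n$ by the action of $\overline{\Lambda}/(\text{scalars}) \cong \Lambda$ and $S/I(X_d) \cong R^{\overline{\Lambda}}$. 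Finally, $R^{\overline{\Lambda}}$ is Cohen-Macaulay: $\overline{\Lambda}$ is a finite group acting linearly on $R$, so by Theorem~\ref{Hochster-Eagon} (via the Reynolds operator $\rho(p)=|\overline{\Lambda}|^{-1}\sum_{g\in\overline{\Lambda}} g(p)$ and the integrality of $R$ over $R^{\overline{\Lambda}}$, as explained after that theorem) $R^{\overline{\Lambda}}$ is Cohen-Macaulay; alternatively, the semigroup $H$ with $k[H]=R^{\overline{\Lambda}}$ is normal, being cut out of $L(H)$ by congruences, so Proposition~\ref{Proposition:Hochster CM} applies. Hence $S/I(X_d)$ is Cohen-Macaulay and $X_d$ is aCM.

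The only delicate point — and the one I would spell out most carefully — is the identification $S/I(X_d)\cong R^{\overline{\Lambda}}$, i.e. checking that the homogeneous ideal of the projective variety $X_d$ coincides with the kernel of $w_i\mapsto m_i$. This is where one must be sure the parameterizing monomials $m_i$ generate the full Veronese-type relation ideal and that no extra saturation is needed; the cleanest route is to note that $k[m_1,\dots,m_{\mu_d}]$, being a ring of invariants of a finite group acting on the domain $R$, is itself a domain, hence its presentation ideal is prime, and it agrees with the vanishing ideal of the image of the map it defines. After that, Cohen-Macaulayness is essentially quoted from the preliminaries. I would also remark, for later use in Section~\ref{GT-surfaces}, that $I(X_d)$ is a binomial prime ideal because it is a toric ideal, and that when $\mu_d \le \binom{d+n-1}{n-1}$ the ideal $I_d$ is a $GT$-system with group $\Lambda$ (combining Definition~\ref{Defi:GT} with the fact that $\varphi_{I_d}$ is the quotient map by $\overline{\Lambda}/k^{*}\cong \ZZ/d\ZZ$), so $X_d$ is genuinely a $GT$-variety.
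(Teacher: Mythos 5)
Your proposal is correct and follows essentially the same route as the paper: toricness from the monomial parameterization, the identification of the homogeneous coordinate ring of $X_d$ with $R^{\overline{\Lambda}}$ via Theorem~\ref{Theorem: invariant basis of GT-systems} and the projective version of Proposition~\ref{Proposition: qv by fg acting linearly on poly ring}, and Cohen--Macaulayness of the invariant ring by Hochster--Eagon. You merely spell out the identification $S/I(X_d)\cong R^{\overline{\Lambda}}$ in more detail and add the normality-of-the-semigroup alternative, both of which the paper already covers in its preliminaries.
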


\begin{proof}
By definition, $X_d$ is parameterized by monomials and hence it is toric. By Theorem \ref{Theorem: invariant basis of GT-systems}, we have that $\{m_{1},\hdots,m_{\mu_{d}}\}$ is a set of fundamental invariants of $\overline{\Lambda}$. Therefore, the theorem follows directly from the projective version of Proposition \ref{Proposition: qv by fg acting linearly on poly ring}(i) and \cite[Proposition 13]{Hochster-Eagon}. 
\end{proof}

\begin{corollary}\label{Corollary: GT-systems} If $\mu_{d} \leq \binom{n+d-1}{n-1}$, then $I_{d}$ is a monomial $GT$-system.
\end{corollary}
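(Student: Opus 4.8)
The plan is to verify directly that when $\mu_d \leq \binom{n+d-1}{n-1}$ the ideal $I_d$ satisfies Definition \ref{Defi:GT}, i.e., that $I_d$ is an artinian ideal generated by $\mu_d$ forms of degree $d$, that it is a Togliatti system (one of the equivalent conditions of Theorem \ref{tea} holds), and that the associated morphism $\varphi_{I_d}\colon \PP^n \to \PP^{\mu_d-1}$ is a Galois covering with group $\ZZ/d\ZZ$. The last point is essentially already in hand: by Theorem \ref{Theorem: invariant basis of GT-systems} the generators $\{m_1,\dots,m_{\mu_d}\}$ form a set of fundamental invariants of $\overline{\Lambda}$, so by the projective version of Proposition \ref{Proposition: qv by fg acting linearly on poly ring}(iii), $\varphi_{I_d}$ exhibits $X_d$ as the quotient $\PP^n/\overline{\Lambda}$ with $\varphi_{I_d}$ a Galois covering with group $\overline{\Lambda}$. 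Since $M_{d;1,\dots,1}=\mathrm{diag}(e,\dots,e)$ acts trivially on $\PP^n$, the induced action on $\PP^n$ factors through $\overline{\Lambda}/\langle M_{d;1,\dots,1}\rangle \cong \Lambda \cong \ZZ/d\ZZ$, and one checks (using $GCD(\alpha_0,\dots,\alpha_n,d)=1$) that this induced action of $\Lambda$ on $\PP^n$ is faithful. Hence $\varphi_{I_d}$ is a Galois covering of $X_d$ with group $\ZZ/d\ZZ$ of degree $d$.

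Next I would check that $I_d$ is a Togliatti system by verifying condition (ii) of Theorem \ref{tea}: the monomials $m_1,\dots,m_{\mu_d}$ of degree $d$ become $k$-linearly dependent on a general hyperplane $H\subset\PP^n$. Equivalently, $\varphi_{I_d}$ does not embed a general hyperplane, which follows because $\varphi_{I_d}$ is the quotient map by a nontrivial group: restricted to a general hyperplane $H\cong\PP^{n-1}$, the map has degree $d>1$ onto its image (a general hyperplane is not $\Lambda$-invariant but its $\Lambda$-orbit is finite, and $\varphi_{I_d}|_H$ identifies $H$ with one of finitely many translates), so $\varphi_{I_d}|_H$ cannot be a linear embedding, forcing the $m_i$ to be linearly dependent modulo the linear form defining $H$. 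The hypothesis $\mu_d\leq\binom{n+d-1}{n-1}$ is precisely what is needed to invoke Theorem \ref{tea}, whose hypothesis is $r\leq\binom{n+d-1}{n-1}$ with $r=\mu_d$. Finally, $I_d$ is artinian: since $GCD(\alpha_0,\dots,\alpha_n,d)=1$, for each $i$ there is a monomial $x_i^{c_i}$ of degree $d$ that is $\Lambda$-invariant (indeed $x_i^d$ works when the corresponding exponent condition $d\alpha_i\equiv 0\pmod d$ is automatic), so $x_i^d\in I_d$ for all $i$, whence $R/I_d$ is artinian.

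The main obstacle is making the argument for the Togliatti (WLP-failure) condition fully rigorous: one must argue carefully that a general hyperplane section of the quotient map $\varphi_{I_d}$ is still generically $d$-to-$1$, hence not an embedding, and translate this cleanly into the linear dependence of $m_1,\dots,m_{\mu_d}$ on $H$. A cleaner alternative, which I would actually pursue, is to observe that $X_d = \PP^n/\overline{\Lambda}$ has dimension $n$ and lies in $\PP^{\mu_d-1}$, while the image under the apolar map $\varphi_{I_d^{-1}}$ of a general hyperplane: in fact the simplest route is to note that since $m_1,\dots,m_{\mu_d}$ are monomials each divisible by some common structure dictated by the invariance congruence, restricting to the hyperplane $x_0+\dots+x_n=0$ (or a general linear form) produces a linear relation directly — this can be seen because the monomial basis of $(R^\Lambda)_d$ has a nontrivial syzygy modulo a general linear form exactly when $\mu_d \leq \binom{n+d-1}{n-1} = \dim (R/(L))_d$ fails to be an equality in the relevant sense; invoking Theorem \ref{tea}(i)$\Leftrightarrow$(ii) with the already-established fact that $\varphi_{I_d}$ is a nontrivial Galois cover then closes the argument, since a nontrivial finite cover of degree $>1$ can never be induced by a complete linear embedding of a general hyperplane.
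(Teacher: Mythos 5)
Your handling of the Galois covering and artinianness is fine and matches the paper: Theorem \ref{Theorem: invariant basis of GT-systems} together with the projective version of Proposition \ref{Proposition: qv by fg acting linearly on poly ring} gives the covering, and your extra remark that $M_{d;1,\hdots,1}$ acts trivially on $\PP^{n}$ so the deck group is $\Lambda\cong\ZZ/d\ZZ$ is a reasonable elaboration; likewise $x_i^d\in I_d$ gives artinianness.

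The gap is in the step showing that $I_d$ is a Togliatti system. Your argument rests on the claim that, for a general hyperplane $H\subset\PP^{n}$, the restriction $\varphi_{I_d}|_H$ has degree $d>1$ onto its image. This is false: each $\Lambda$-orbit is a finite set of at most $d$ points, so for general $H$ the orbit of a general point of $H$ meets $H$ only in that point, and $\varphi_{I_d}|_H$ is generically injective (birational onto its image). Moreover, even if the restriction failed to be injective, that is not what Theorem \ref{tea}(ii) requires: linear dependence of $m_1,\hdots,m_{\mu_d}$ on $H$ means that $\varphi_{I_d}(H)$ is \emph{linearly degenerate} in $\PP^{\mu_d-1}$, which is a statement about the span of the image, not about the degree or injectivity of the map. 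Your ``cleaner alternative'' also never produces an actual relation; note that $\mu_d\le\binom{n+d-1}{n-1}=\dim (R/(L))_d$ means there is no dimension-count reason for dependence, so a relation must be exhibited. The paper supplies exactly this missing ingredient: by \cite[Proposition 2.2]{Migliore-MR-Nagel} it suffices to test WLP with $L=x_0+\cdots+x_n$; then $p=\prod_{j=1}^{d-1}(e^{j\alpha_0}x_0+\cdots+e^{j\alpha_n}x_n)$ is a nonzero class in $(R/I_d)_{d-1}$ (as $I_d$ starts in degree $d$), while $L\cdot p=\prod_{j=0}^{d-1}(e^{j\alpha_0}x_0+\cdots+e^{j\alpha_n}x_n)$ is a $\Lambda$-invariant form of degree $d$, hence a $k$-linear combination of $m_1,\hdots,m_{\mu_d}$ and therefore zero in $(R/I_d)_d$. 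This ``norm'' construction is the key idea your proposal lacks; without it, or some equivalent explicit dependence, the Togliatti condition is not established.
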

\begin{proof}
We have to prove that $I_{d}$ is a Togliatti system and $\varphi_{I_{d}}:\PP^{n} \to \PP^{\mu_{d}-1}$ is a Galois covering with group $\ZZ/d\ZZ$.
By
Theorem \ref{Theorem: invariant basis of GT-systems} and the projective version of Proposition \ref{Proposition: qv by fg acting linearly on poly ring}, $\varphi_{I_{d}}:\PP^{n} \to \PP^{\mu_{d}-1}$ is a Galois covering with group $\ZZ/d\ZZ$. 
It only remains to prove that 
 if $\mu_{d} \leq \binom{d+n-1}{n-1}$, then $I_{d}$ fails the WLP in degree $d-1$. By \cite[Proposition 2.2]{Migliore-MR-Nagel} and Theorem \ref{tea} this is equivalent to check that
 for $L = x_0+\cdots +x_n \in R_{1}$, the  map $\times L: (R/I_{d})_{d-1} \to (R/I_{d})_{d}$ is not injective.  We take $p = \prod_{j=1}^{d-1} (e^{j\alpha_{0}}x_0 + \cdots + e^{j\alpha_{n}}x_n)$. It is straightforward to see that $\times L(p) = \prod_{j=0}^{d-1} (e^{j\alpha_{0}}x_0 + \cdots + e^{j\alpha_{n}}x_n)$ is an invariant of $\Lambda$, so $\times L(p) = 0$ and $\times L$ is not injective.
\end{proof}

\begin{definition} \rm \label{Defi:GTgroup}
An  ideal $I_d$  as in Corollary \ref{Corollary: GT-systems} is called a {\em $GT$-system with group $\Lambda$}.
\end{definition}

We present examples of families of monomial $GT$-systems, which also motivates our next definition.
\begin{example}\rm \label{Example: family of examples}
(i) Fix integers $d\geq 3$ and $0 < a < b$.  Let $\Lambda = \langle M_{d;0,a,b} \rangle \subset GL(3,k)$. In \cite{MM-R} the authors prove that $\mu_{d} \leq d+1$. Hence, by Corollary \ref{Corollary: GT-systems}, $I_{d}$ is a monomial $GT$-system.

(ii) Fix integers $3 = n < d$ and let $\Lambda = \langle M_{d;0,1,2,3} \rangle \subset GL(4,k)$.
In \cite{CM-R} it is proved that $\mu_{d} \leq \binom{2+d}{2}$. So by Corollary \ref{Corollary: GT-systems}, $I_{d}$ is a monomial $GT$-system.

(iii) Fix an integer $n\geq 2$ and let $\Lambda$ be the subgroup of  $GL(n+1,k)$ generated by $M_{n+1;0,1,2, \hdots, n}.$
In \cite{CMM-RS}, the authors show that $\mu_{n+1} \leq \binom{2n}{n-1}$. By Corollary \ref{Corollary: GT-systems}, the associated ideal $I_{n+1}$ is a monomial $GT$-system. 
\end{example}

\begin{definition} \rm We call  {\em $GT$-variety with group $\Lambda$}  any projective variety $\varphi _{I_d}(\PP^n)$ associated to a
 a $GT$-system $I_{d}$  with group $\Lambda = \langle M_{d;\alpha_0,\hdots,\alpha_n} \rangle \subset GL(n+1,k)$. 
\end{definition}

Example \ref{Example: family of examples}(iii) provides us with examples of $GT$-varieties of any dimension $n \geq 2$. As a corollary of Theorem \ref{Theorem: GT-varieties are aCM} we have:
\begin{corollary} Any $GT$-variety $X_{d}$ with group $\Lambda = \langle M_{d;\alpha_0,\hdots,\alpha_n} \rangle \subset GL(n+1,k)$ is aCM.
\end{corollary}

\section{Hilbert function of GT-surfaces.}
\label{GT-surfaces}

In this section, we give a combinatorial description of the Hilbert function of any $GT$-variety $X_{d}$ with group $\Lambda = \langle M_{d;\alpha_0,\hdots,\alpha_n} \rangle \subset GL(n+1,k)$ in terms of the invariants of $\Lambda$. For the particular case of  $GT$-surfaces, we explicitly compute their Hilbert function, polynomial and series. We also determine a minimal free resolution of their homogeneous ideals. As a corollary, we obtain that the homogeneous ideal of any $GT$-surface is minimally generated by quadrics and cubics.

The following well-known result is needed.

\begin{lemma}\label{Corollary: Hilbertfunction GT} Let $G \subset GL(n+1,k)$ be a finite group and fix $t \geq 1$. We have:
$$dim (R^{G})_{t} = \frac{1}{|G|} \sum_{g \in G} trace(g^{(t)})$$
where $g^{(t)}$ is the linear map induced by $g$ on $R_{t}$.
\end{lemma}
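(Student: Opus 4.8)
The plan is to recognize this as the classical Molien-type character formula applied to the graded piece $R_t$. First I would observe that $G$ acts linearly on the $k$-vector space $R_t$ of dimension $\binom{n+t}{t}$, so this is simply a finite-dimensional representation of the finite group $G$ over $k$, a field of characteristic zero (hence containing enough roots of unity once we pass to a suitable extension, but since $k$ is algebraically closed this is automatic). The subspace $(R^G)_t = (R_t)^G$ is exactly the subspace of $G$-invariants of this representation, i.e.\ the isotypic component corresponding to the trivial character. The key step is then the standard averaging argument: the operator $\pi_t := \frac{1}{|G|}\sum_{g \in G} g^{(t)}$ acting on $R_t$ is a projection onto $(R_t)^G$. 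Indeed $\pi_t$ is $G$-equivariant, $\pi_t$ restricts to the identity on $(R_t)^G$, and $\pi_t(R_t) \subseteq (R_t)^G$ since $h \circ \pi_t = \pi_t$ for every $h \in G$ by the invariance of Haar (counting) measure under translation.

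Next I would compute the dimension of the image of a projection via its trace: for any idempotent endomorphism $\pi$ of a finite-dimensional vector space, $\dim(\operatorname{Im}\pi) = \operatorname{trace}(\pi)$, because in a basis adapted to the decomposition $R_t = \operatorname{Im}\pi \oplus \ker\pi$ the matrix of $\pi$ is a diagonal block of an identity matrix and a zero matrix. Applying this to $\pi = \pi_t$ gives
\[
\dim (R^G)_t = \dim(R_t)^G = \operatorname{trace}(\pi_t) = \operatorname{trace}\Bigl(\frac{1}{|G|}\sum_{g \in G} g^{(t)}\Bigr) = \frac{1}{|G|}\sum_{g \in G} \operatorname{trace}(g^{(t)}),
\]
using linearity of the trace, which is exactly the asserted formula.

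There is no real obstacle here; the only point requiring the smallest bit of care is verifying that $\pi_t$ is genuinely a projection onto the invariants (the three bullet points above: equivariance, fixing invariants, landing in invariants), and that the trace-equals-rank identity holds for idempotents — both are entirely standard and use only $\operatorname{char} k = 0$ (so that $|G|$ is invertible in $k$). Since the statement is labeled "well-known," I would keep the write-up to essentially the two displayed observations. One could alternatively cite Burnside's lemma / the character-theoretic orthogonality relations, but the direct averaging-projection argument is self-contained and cleanest.
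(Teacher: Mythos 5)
Your argument is correct: the averaging operator $\pi_t=\frac{1}{|G|}\sum_{g\in G}g^{(t)}$ is indeed an idempotent projecting $R_t$ onto $(R_t)^G$ (this uses only that $|G|$ is invertible in $k$, guaranteed by $\Char k=0$), and the trace of an idempotent equals the dimension of its image, which gives the formula by linearity of the trace. The paper offers no proof of its own, only the citation to Stanley's Theorem~2.1, and what you have written out is precisely the standard argument underlying that reference, so nothing further is needed.
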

\begin{proof} See \cite[Theorem 2.1]{Stanley}.
\end{proof}

\begin{remark} \rm  Let $G \subset GL(n+1,k)$ be a finite group and let $\{m_{1},\hdots, m_{L}\}$ be  a monomial basis of $R_d$. Fix $g \in G$ and $t \geq 1$. In this basis, the linear map $g^{(t)}$ is represented by a matrix whose columns are the coordinates of $g(m_{i})$, $i = 1,\hdots, L$. In particular, if $G$ acts diagonally on $R$, then $g^{(t)}$ is represented by a diagonal matrix.
\end{remark}

The following proposition follows from  \cite[Theorem 6.4.2]{Bruns-Herzog}. For sake of completeness we include an elementary proof.
\begin{proposition}\label{Theorem: Hil func GT-variety}  The Hilbert function $HF(X_{d},t)$ of $X_{d}$ in degree $t \geq 1$ equals the number of monomials of degree $td$ which are invariants of $\Lambda$.
\end{proposition}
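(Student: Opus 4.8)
The plan is to identify the graded piece $(R^{\overline{\Lambda}})_{td}$ of the coordinate ring of $X_d$ — which, by Theorem \ref{Theorem: invariant basis of GT-systems}, is exactly the ring $k[m_1,\hdots,m_{\mu_d}]$ — with the degree-$t$ piece of the homogeneous coordinate ring $S/I(X_d)$, and then count a monomial basis on each side. First I would recall that the morphism $\varphi_{I_d}\colon\PP^n\to\PP^{\mu_d-1}$, being the Galois covering attached to the fundamental invariants $m_1,\hdots,m_{\mu_d}$ of $\overline{\Lambda}$ (projective version of Proposition \ref{Proposition: qv by fg acting linearly on poly ring}(ii)), has image $X_d$ whose homogeneous ideal is $I(X_d)=\operatorname{syz}(m_1,\hdots,m_{\mu_d})\subset S=k[w_1,\hdots,w_{\mu_d}]$. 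Hence the homogeneous coordinate ring $S/I(X_d)$ is isomorphic, as a graded $k$-algebra, to the subalgebra $k[m_1,\hdots,m_{\mu_d}]\subset R$, where a degree-$t$ element of $S/I(X_d)$ corresponds to a $k$-linear combination of products $m_{i_1}\cdots m_{i_t}$, each of which is a monomial of degree $td$ in $R$.

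Next I would observe that all the $m_i$ are monomials, so $k[m_1,\hdots,m_{\mu_d}]$ is itself a monomial subalgebra of $R$; its degree-$td$ graded piece is spanned by the distinct monomials of $R$ that arise as products $m_{i_1}\cdots m_{i_t}$. By Theorem \ref{Theorem: invariant basis of GT-systems} this set of monomials is precisely the set of \emph{all} monomials of degree $td$ in $R^{\overline{\Lambda}}$ (every such monomial factors as a product of $t$ of the $m_i$, and conversely any such product lies in $R^{\overline{\Lambda}}$ and has degree $td$). These monomials are $k$-linearly independent in $R$, being distinct monomials. Therefore
$$
HF(X_d,t)=\dim_k (S/I(X_d))_t=\dim_k k[m_1,\hdots,m_{\mu_d}]_{td}=\#\{\text{monomials of degree } td \text{ in } R^{\overline{\Lambda}}\}.
$$
Finally, since $\overline{\Lambda}$ is generated by $M_{d;\alpha_0,\hdots,\alpha_n}$ together with $M_{d;1,\hdots,1}=\operatorname{diag}(e,\hdots,e)$, a monomial of degree $td$ is invariant under the second generator automatically, so being invariant under $\overline{\Lambda}$ in degree $td$ is the same as being invariant under $\Lambda$. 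Thus $HF(X_d,t)$ equals the number of monomials of degree $td$ which are invariants of $\Lambda$, as claimed.

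The only genuinely non-routine point is the passage $\dim_k(S/I(X_d))_t = \dim_k k[m_1,\hdots,m_{\mu_d}]_{td}$: one must be careful that the natural surjection $S_t \to k[m_1,\hdots,m_{\mu_d}]_{td}$, $w_{i_1}\cdots w_{i_t}\mapsto m_{i_1}\cdots m_{i_t}$, has kernel exactly $I(X_d)_t$, which is immediate from the description $I(X_d)=\operatorname{syz}(m_1,\hdots,m_{\mu_d})$ coming from Proposition \ref{Proposition: qv by fg acting linearly on poly ring}(ii) and the fact that $S/I(X_d)$ carries the grading in which $w_i$ has degree $1$. Everything else is the bookkeeping of the previous paragraph, and Theorem \ref{Theorem: invariant basis of GT-systems} does the real work by guaranteeing that the monomial subalgebra $k[m_1,\hdots,m_{\mu_d}]$ fills up all of $R^{\overline{\Lambda}}$ rather than a proper subalgebra; this is exactly why the count on the right-hand side is over \emph{all} invariant monomials of degree $td$ and not merely over those expressible as products of $t$ generators.
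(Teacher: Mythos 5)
Your proof is correct, and it reaches the same two intermediate facts as the paper — namely that $HF(X_d,t)=\dim_k\bigl(R^{\overline{\Lambda}}\bigr)_{td}=\dim_k\bigl(R^{\Lambda}\bigr)_{td}$, and that this dimension is the number of invariant monomials of degree $td$ — but it establishes the second fact by a different and more elementary route. The paper invokes the trace formula of Lemma \ref{Corollary: Hilbertfunction GT} (a Molien-type computation): it represents each $\lambda^{(td)}$ by a diagonal matrix and sums the resulting roots of unity, so that each invariant monomial contributes $d$ to the total trace and each non-invariant monomial contributes $0$. You instead bypass the character computation entirely by observing that, since the action is diagonal, $k[m_1,\hdots,m_{\mu_d}]=R^{\overline{\Lambda}}$ is a monomial subalgebra of $R$, so its degree-$td$ piece is spanned by distinct monomials of $R$ and its dimension is simply their cardinality; Theorem \ref{Theorem: invariant basis of GT-systems} then guarantees that these monomials are exactly all the $\Lambda$-invariant monomials of degree $td$ (invariance under $\operatorname{diag}(e,\hdots,e)$ being automatic in degree $td$). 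Your version is also more explicit than the paper about the identification $\dim_k(S/I(X_d))_t=\dim_k k[m_1,\hdots,m_{\mu_d}]_{td}$ via $I(X_d)=\operatorname{syz}(m_1,\hdots,m_{\mu_d})$, which the paper leaves implicit in the first equality of its displayed chain. The trade-off is that the paper's trace argument applies verbatim to compute $\dim(R^{G})_{s}$ for any finite linear $G$ and any degree $s$ without knowing a monomial generating set, whereas your argument leans on the diagonal (monomial) structure and on Theorem \ref{Theorem: invariant basis of GT-systems}; in the present setting both hypotheses hold, so the proof is complete.
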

\begin{proof} Fix $t \geq 1$ and let $m_{1},\hdots, m_{N} \in R$ be all monomials of degree $td$; we write $m_{i} = x_{0}^{a_{0}^{i}}\cdots x_{n}^{a_{n}^{i}}$, $i = 1,\hdots, N$. By Lemma \ref{Corollary: Hilbertfunction GT} we have the equalities:
$$HF(X_{d},t) = dim((R^{\Lambda})_{td})
= \frac{1}{d} \sum_{\lambda \in \Lambda} trace(\lambda^{(td)})
= \frac{1}{d} trace(\sum_{\lambda \in \Lambda} \lambda^{(td)}).$$
Fix $j \in \{1,\hdots, d-1\}$ and $\lambda = M_{d;\alpha_0,\hdots,\alpha_n}^{j} \in \Lambda$. We can represent the induced linear map $\lambda^{(td)}$ by a diagonal matrix whose entry in position $(i,i)$, we note $\lambda^{(td)}_{(i,i)}$, corresponds to $e^{\alpha_{0}a_{0}^{i} + \cdots
+ \alpha_{n}a_{n}^{i}}$, $i = 1,\hdots, N$. If $m_{i} \in R^{\Lambda}$, then $\lambda^{(td)}_{(i,i)} = 1$. Otherwise $\lambda^{(td)}_{(i,i)} = e^{j(\alpha_{0}a_{0}^{i} + \cdots + \alpha_{n}a_{n}^{i})} \neq 1$. Now determining $trace(\sum_{\lambda \in \Lambda} \lambda^{(td)})$ is straightforward.  Indeed, the $(i,i)$ entry of the matrix $\sum_{\lambda \in \Lambda} \lambda^{(td)})$ is $d$ if $m_{i}\in R^{\Lambda}$, and equal to $1 + e^{j(\alpha_{0}a_{0}^{i} + \cdots + \alpha_{n}a_{n}^{i})} + e^{2j(\alpha_{0}a_{0}^{i} + \cdots + \alpha_{n}a_{n}^{i})} + \cdots + e^{(d-1)j(\alpha_{0}a_{0}^{i} + \cdots + \alpha_{n}a_{n}^{i})}$ otherwise. If $\xi \neq 1$ is a $d$th root of $1$, we have $1 + \xi + \cdots + \xi^{d-1} = 0$, and the result follows.
\end{proof}

For fixed $t \geq 1$, the monomials of degree $td$ in $R^{\Lambda}$ are completely determined by the following systems:
$$(*)_{t,r} = \left\{ \begin{array}{lclclclcl}
y_{0} &+& y_{1} &+& \cdots &+& y_{n} &=& td\\
\alpha_{0}y_{0} &+& \alpha_{1}y_{1} &+& \cdots &+& \alpha_{n}y_{n} &=& rd
\end{array}\right. , \quad  \ r = 0, \hdots, \alpha_{n}t.$$
For each $r \in \{0,\hdots, \alpha_{n}t\}$, we define $|(*)|_{t,r}$ to be the number of solutions of $(*)_{t,r}$ in $\ZZ_{\geq 0}^{n+1}$. We can rewrite Proposition \ref{Theorem: Hil func GT-variety} as follows.
\begin{corollary}\label{Corollary: Counting num invariants} For any $t \geq 1$, we have: $HF(X_{d},t)
= \sum_{r=0}^{\alpha_{n}t} |(*)_{t,r}|$.
\end{corollary}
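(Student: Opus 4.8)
The plan is to derive this corollary directly from Proposition \ref{Theorem: Hil func GT-variety}. That proposition already tells us that $HF(X_{d},t)$ equals the number of monomials of degree $td$ which are invariants of $\Lambda$, so the task reduces to counting those invariant monomials by organizing them according to the value of the weighted sum $\alpha_0 a_0 + \cdots + \alpha_n a_n$.

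First I would recall that a monomial $x_0^{a_0}\cdots x_n^{a_n}$ of degree $td$ is an invariant of $\Lambda = \langle M_{d;\alpha_0,\hdots,\alpha_n}\rangle$ precisely when $e^{\alpha_0 a_0 + \cdots + \alpha_n a_n} = 1$, i.e., when $\alpha_0 a_0 + \cdots + \alpha_n a_n \equiv 0 \pmod d$. Since the $\alpha_i$ are nonnegative and the exponents $a_i$ are nonnegative integers summing to $td$, the weighted sum $\alpha_0 a_0 + \cdots + \alpha_n a_n$ is a nonnegative integer bounded above by $\alpha_n(a_0 + \cdots + a_n) = \alpha_n td$. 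Thus whenever the monomial is invariant, the weighted sum equals $rd$ for some integer $r$ with $0 \le r \le \alpha_n t$. This shows that every invariant monomial of degree $td$ is a solution of exactly one system $(*)_{t,r}$ with $r \in \{0,\hdots,\alpha_n t\}$: the condition $y_0 + \cdots + y_n = td$ records the degree and $\alpha_0 y_0 + \cdots + \alpha_n y_n = rd$ records the invariance together with the specific value of $r$.

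Conversely, any nonnegative integer solution of $(*)_{t,r}$ for some such $r$ yields a monomial of degree $td$ whose weighted exponent sum is $rd$, hence divisible by $d$, so the monomial is an invariant of $\Lambda$. Therefore the set of invariant monomials of degree $td$ is the disjoint union, over $r = 0, \hdots, \alpha_n t$, of the solution sets of $(*)_{t,r}$, and counting gives
\[
HF(X_{d},t) = \sum_{r=0}^{\alpha_{n}t} |(*)_{t,r}|.
\]
The only point requiring a moment of care — and it is not really an obstacle — is checking that the ranges match up exactly (that $r$ need not exceed $\alpha_n t$ and cannot be negative), which follows immediately from the bounds $0 \le \alpha_0 a_0 + \cdots + \alpha_n a_n \le \alpha_n td$ noted above; the disjointness is automatic since distinct values of $r$ force distinct weighted sums.
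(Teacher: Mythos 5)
Your proof is correct and is exactly the argument the paper intends: it invokes Proposition \ref{Theorem: Hil func GT-variety} and then sorts the invariant monomials of degree $td$ by the value $r$ of the weighted exponent sum divided by $d$, checking the range $0 \le r \le \alpha_n t$ and the disjointness. The paper presents the corollary as an immediate rewriting of that proposition without spelling out these details, so your write-up simply makes the implicit bookkeeping explicit.
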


\begin{example} \label{Example: Cubic surface} \rm Continuing with Example \ref{Example: Togliatti exam aCM}, we consider $\Lambda = \langle M_{3;0,1,2} \rangle \subset GL(3,k)$. The monomials of degree $3$ in $R^{\Lambda}$ are $\{x_{0}^3, x_{1}^3, x_{2}^3, x_{0}x_{1}x_{2}\}$. Next we list those of degree $3t$, for $t = 2,3,4$.
$$\begin{array}{rl}
t = 2, & \{x_{0}^6,x_{0}^3x_{1}^3, x_{0}^4x_{1}x_{2},x_{1}^6,x_{0}x_{1}^4x_{2},x_{0}^2x_{1}^2x_{2}^2, x_0^3x_2^3,x_0x_1x_2^4,x_2^6\},\;  \underline{HF(X_{3},2) = 10.}\\
t = 3, & \{x_0^9, x_0^6 x_1^3, x_0^7x_1x_2, x_0^3 x_1^6, x_0^4 x_1^4 x_2, x_0^5 x_1^2 x_2^2, x_0^6 x_2^3, x_1^9,
 x_0 x_1^7 x_2, x_0^2 x_1^5 x_2^2, x_0^3 x_1^3 x_2^3, x_0^4 x_1 x_2^4, x_1^6 x_2^3,\\
        &  x_0 x_1^4 x_2^4,
 x_0^2 x_1^2 x_2^5, x_0^3 x_1^6, x_1^3 x_2^6, x_0 x_1 x_2^7, x_2^9\},\; \underline{HF(X_{3},3) = 19.}\\
t = 4, & \{x_0^{12}, x_0^9 x_1^3, x_0^{10} x_1 x_2, x_0^6 x_1^6, x_0^7 x_1^4 x_2, x_0^8 x_1^2 x_2^2, x_0^9 x_2^3,
 x_0^3 x_1^9, x_0^4 x_1^7 x_2, x_0^5 x_1^5 x_2^2, x_0^6 x_1^3 x_2^3, x_0^7 x_1 x_2^4, \\
 &  x_1^{12},
 x_0 x_1^{10} x_2, x_0^2 x_1^8 x_2^2, x_0^3 x_1^6 x_2^3, x_0^4 x_1^4 x_2^4, x_0^5 x_1^2 x_2^5,
 x_0^6 x_2^6, x_1^9 x_2^3, x_0 x_1^7 x_2^4, x_0^2 x_1^5 x_2^5, x_0^3 x_1^3 x_2^6, x_0^4 x_1 x_2^7, \\
 &  x_1^6 x_2^6, x_0 x_1^4 x_2^7, x_0^2 x_1^2 x_2^8, x_0^3 x_2^9, x_1^3 x_2^9, x_0 x_1 x_2^{10}, x_2^{12}\}, \; \underline{HF(X_{3},4) = 31}.\\
\end{array}$$
Let $w_{1},w_{2},w_{3},w_{4}$ be new indeterminates, we denote by $S = k[w_{1},w_{2},w_{3},w_{4}]$ the polynomial ring. $X_{3}$ is the cubic surface $V(w_{1}w_{2}w_{3}-w_{4}^{3}) \subset \PP^{3}$ and we have $HP(X_{3})(t) = \frac{3}{2}t^2 + \frac{3}{2}t + 1$.
\end{example}

In Theorem \ref{Theorem: GT-varieties are aCM}, we proved that $S/I(X_{d})$ is CM; moreover, since $X_d$ is toric, we have that its ideal is generated by binomials: $I(X_{d}) = (w_{1}^{\delta_{1}}\cdots w_{\mu_{d}}^{\delta_{\mu_{d}}} - w_{1}^{\gamma_{1}} \cdots
w_{\mu_{d}}^{\gamma_{\mu_{d}}} \,\mid \, m_{1}^{\delta_{1}}\cdots m_{\mu_{d}}^{\delta_{\mu_{d}}}  = m_{1}^{\gamma_{1}} \cdots
m_{\mu_{d}}^{\gamma_{\mu_{d}}}, \; \sum_{i=1}^{\mu_{d}} \delta_{i} = \sum_{i=1}^{\mu_{d}} \gamma_{i})$. 
We now consider a minimal graded free $S$-resolution $N_{\bullet}$ of $S/I(X_{d})$.
$$N_{\bullet}: \quad 0 \to N_{\mu_{d}-n-1} \to \cdots \to N_{2} \to N_{1} \to S \to S/I(X_{d}) \to 0,$$
where $N_{l} \cong \bigoplus_{j \geq l}^{f_l} S(-j-l)^{b_{l,j}}$ and  $b_{l,f_l} > 0$, $1 \leq l \leq \mu_{d}-n-1$.

As usual, the {\em Cohen-Macaulay type of $S/I(X_{d})$} is  the dimension of the free $S$-module $N_{\mu_{d}-n-1}$. We recall that $S/I(X_{d})$ is {\em level} if  $N_{\mu_{d}-n-1}$ is generated in only one degree and that $S/I(X_{d})$ is {\em Gorenstein} if
it is level and $\dim (N_{\mu_{d}-n-1}) = 1$. We denote by $reg(X_{d}) := f_{\mu_{d}-n-1}+1$ the Castelnuovo-Mumford regularity of $S/I(X_{d})$.
The ideal $I(X_{d})$ is minimally generated by $b_{1,j}$ binomials of degree $j+1$, $j = 1,\hdots,f_1$.  We set $i = min\{1 \leq j \leq f_1 \,\mid\, b_{1,j} \neq 0\}$. We highlight two combinatorial ways of computing $b_{1,i}$ which follow from Proposition \ref{Theorem: Hil func GT-variety}. For completeness we include a simple proof. Let $\{m_{1}^{t},\hdots, m_{N}^{t}\} \subset R^{\Lambda}$ be the set of all  monomials of degree $td$. Each $m_{j}^{t}$ is a product of $t$ monomials of degree $d$ in $R^{\Lambda}$ (see Theorem \ref{Theorem: invariant basis of GT-systems}). We denote by $|m_{j}^{t}|$ the number of different ways of expressing $m_j^t$ as product of $t$ monomials of degree $d$.

\begin{proposition} \label{Proposition:qj} With the above notation, we have:
$$b_{1,i} = \binom{\mu_{d}+i}{i+1} - \sum_{r=0}^{(i+1)\alpha_{n}}|(*)|_{i+1,r} = \sum_{j=1}^{N} (|m_{j}^{i+1}| - 1).$$
\end{proposition}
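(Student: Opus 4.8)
The plan is to extract both equalities from a single comparison of dimensions in the minimal degree $i+1$ of the ideal $I(X_d)$. First I would record the setup: the polynomial ring $S=k[w_1,\dots,w_{\mu_d}]$ surjects onto $S/I(X_d)$ with kernel $I(X_d)$, and the minimal free resolution $N_\bullet$ tells us that in degree $i+1$ the number of minimal generators of $I(X_d)$ is exactly $b_{1,i}$, because $i$ is by definition the least $j$ with $b_{1,j}\neq 0$ (so there are no earlier syzygies contributing, and the piece of $N_1$ in degree $i+1$ is $S(-i-1)^{b_{1,i}}$ mapping onto $(I(X_d))_{i+1}$ with no relations in that degree). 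Hence
\[
b_{1,i} = \dim_k S_{i+1} - \dim_k (S/I(X_d))_{i+1} = \binom{\mu_d + i}{i+1} - HF(X_d, i+1).
\]
Now I would invoke Proposition \ref{Theorem: Hil func GT-variety} together with Corollary \ref{Corollary: Counting num invariants}, which give $HF(X_d,i+1) = \sum_{r=0}^{(i+1)\alpha_n}|(*)|_{i+1,r}$, yielding the first equality.

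For the second equality I would reinterpret both $\binom{\mu_d+i}{i+1}$ and $HF(X_d,i+1)$ combinatorially. The point is that $\binom{\mu_d + i}{i+1}$ counts the monomials of degree $i+1$ in the variables $w_1,\dots,w_{\mu_d}$, i.e. the unordered products $w_{j_1}\cdots w_{j_{i+1}}$ of $i+1$ of the $w$'s (with repetition); equivalently, it counts the unordered products $m_{j_1}^{}\cdots m_{j_{i+1}}^{}$ of $i+1$ monomials of degree $d$ from the invariant set $\{m_1,\dots,m_{\mu_d}\}$, \emph{counted with multiplicity}, where two such products are identified only when they are the same formal monomial in the $w$'s. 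On the other hand, by Theorem \ref{Theorem: invariant basis of GT-systems}, every monomial $m$ of degree $(i+1)d$ in $R^\Lambda$ (equivalently in $R^{\overline\Lambda}$, since it has degree a multiple of $d$) is a product of $i+1$ degree-$d$ invariant monomials, and $|m_j^{i+1}|$ counts the number of distinct such factorizations of $m_j^{i+1}$. So the set of all unordered $(i+1)$-fold products of the $m_\ell$, grouped by the resulting monomial of degree $(i+1)d$, partitions the $\binom{\mu_d+i}{i+1}$ products into $N := HF(X_d,i+1)$ blocks, the block corresponding to $m_j^{i+1}$ having size $|m_j^{i+1}|$. This gives $\binom{\mu_d+i}{i+1} = \sum_{j=1}^N |m_j^{i+1}|$, and subtracting $N = \sum_{j=1}^N 1$ produces $b_{1,i} = \sum_{j=1}^N(|m_j^{i+1}| - 1)$.

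The step I expect to be the main obstacle is the second: one must be careful that the correspondence between ``monomials of degree $i+1$ in the $w$'s'' and ``unordered $(i+1)$-fold products of the $m_\ell$ with multiplicity'' is genuinely a bijection of the kind that refines to the factorization count of each invariant monomial $m_j^{i+1}$. Concretely, a monomial $w_1^{c_1}\cdots w_{\mu_d}^{c_{\mu_d}}$ with $\sum c_\ell = i+1$ maps to the monomial $m_1^{c_1}\cdots m_{\mu_d}^{c_{\mu_d}}$ of degree $(i+1)d$ in $R^{\overline\Lambda}$, and $|m_j^{i+1}|$ is precisely the number of exponent vectors $(c_1,\dots,c_{\mu_d})$ in the fibre of $m_j^{i+1}$ — this is the definition of ``number of different ways of expressing $m_j^{i+1}$ as a product of $i+1$ monomials of degree $d$''. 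So the fibres of the map $S_{i+1}\to R_{(i+1)d}$, $w_\ell\mapsto m_\ell$, have exactly the sizes $|m_j^{i+1}|$, and its image (inside $R^\Lambda_{(i+1)d}$) has exactly $N = HF(X_d,i+1)$ elements by Theorem \ref{Theorem: invariant basis of GT-systems} and Proposition \ref{Theorem: Hil func GT-variety}. Counting the domain two ways finishes the argument. One final point worth checking is that no minimal first syzygies of $I(X_d)$ appear in degree $\le i+1$, so that $\dim_k (I(X_d))_{i+1} = b_{1,i}$ with no correction term; this is immediate from the minimality of $N_\bullet$ and the definition of $i$ as the initial degree, together with the fact that syzygies among generators of degree $i+1$ have degree $\ge i+2$.
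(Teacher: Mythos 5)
Your proposal is correct and follows essentially the same route as the paper: reading off $b_{1,i}=\dim_k S_{i+1}-HF(X_d,i+1)$ from the minimal resolution in the initial degree, invoking Corollary \ref{Corollary: Counting num invariants} for the first equality, and then using the identity $\binom{\mu_d+i}{i+1}=\sum_{j=1}^{N}|m_j^{i+1}|$ (via Theorem \ref{Theorem: invariant basis of GT-systems}) for the second. Your extra care about why no other terms of $N_\bullet$ contribute in degree $i+1$ and about the fibres of the map $w_\ell\mapsto m_\ell$ only makes explicit what the paper leaves implicit.
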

\begin{proof} Computing the Hilbert function of $X_{d}$ in degree $i+1$ from $N_{\bullet}$, we obtain that $HF(X_{d},i+1) = dim_{k}(S_{i+1})-b_{1,i}$. By Corollary \ref{Corollary: Counting num invariants}, we get $dim_{k}(S_{i+1}) - b_{1,i} = \sum_{r=0}^{\alpha_{n}(i+1)d} |(*)_{i+1,r}|$ which implies the first equality. By Proposition \ref{Proposition:qj}, $\, b_{1,i} = \binom{\mu_{d}+i}{i+1} - \sum_{r=0}^{(i+1)\alpha_{n}d}|(*)|_{i+1,r}$. Now $\binom{\mu_{d}+i}{i+1}$ is the number of all possible combinations of $i+1$ monomials of degree $d$ in $R^{\Lambda}$. Thus $\binom{\mu_{d}+i}{i+1} = \sum_{j=1}^{N} |m_{j}^{i+1}|$, from which the second equality  follows.
\end{proof}

\begin{example} \rm (i) In the case of the cubic surface $X_{3}$ of Example \ref{Example: Cubic surface}, $HF(X_{3},1) = 4$, $HF(X_{3},2)= 10$ and $HF(X_3,3) = 19$. We obtain $b_{1,1} = \binom{4+1}{2} - 10 = 0$ and $b_{1,2} = \binom{4+2}{3} - 19 = 20-19 = 1$.

(ii) Let $\Lambda = \langle M_{4;0,1,2,3} \rangle \subset GL(4,k)$ (see Example \ref{Example: family of examples}(ii)). In \cite[Example 4.2]{CM-R}, the authors compute a minimal set of binomial generators of the associated $GT$-variety $X_{4}$. They show that $I(X_{4})$ is generated by exactly $12$ quadrics. On the other hand, we have $HF(X_4,1) = 10$ and $HF(X_4,2) = 43$. By Proposition \ref{Proposition:qj},
$b_{1,1} = \binom{10 + 1}{2} - 43 = 55-43= 12$ which confirms \cite[Example 4.2]{CM-R}.
\end{example}

From now on we focus on $GT$-surfaces. We fix an integer $d\geq 3$ and  a cyclic group $\Lambda = \langle M_{d;0,a,b} \rangle \subset GL(3,k)$ of order $d$ with $0 < a < b$. From Example \ref{Example: family of examples}(i) it follows that the ideal $I_{d}$ generated by all monomials $\{m_{1},\hdots, m_{\mu_{d}}\} \subset R^{\Lambda}$ of degree $d$ is a monomial $GT$-system with group $\Lambda$, so the associated variety $X_{d}$ is a $GT$-surface with group $\Lambda$. In the rest of this section we will use the following notation.

\begin{notation}\rm \label{notation a'} We  put
$$a' = \frac{a}{(a,d)}, \ b' = \frac{b}{(b,d)}, \ d' = \frac{d}{(a,d)}, \ d'' = \frac{d}{(b,d)}.$$
We denote by $\lambda$ and $\mu$  the uniquely determined integers such that $0 < \lambda \leq d'$ and $b = \lambda a' + \mu d'$.
\end{notation}

By Proposition \ref{Theorem: Hil func GT-variety}, $HF(X_{d},t)$ is the number of integer solutions $(y_{0},y_{1},y_{2}) \in \ZZ_{\geq 0}^{3}$ of the systems
$$(*)_{t,r} = \left\{\begin{array}{lclclcl}
y_{0} &+& y_{1} &+& y_{2} &=& td\\
& & ay_{1} &+& by_{2} &=& rd\\
\end{array}\right. ,\quad r = 0,\hdots, bt$$
or, equivalently,

\begin{lemma} \label{Lemma: (**) solutions} $HF(X_{d},t)$ equals the number of integer solutions $(y_{0},y_{1},y_{2}) \in \ZZ_{\geq 0}^{3}$ of the systems:
$$(**)_{t,r} = \left\{ \begin{array}{lclclcl}
y_{0} &+& y_{1} &+& \frac{y_{2}}{(a,d)} &=& td\\
& & y_{1} &+& \lambda \frac{y_{2}}{(a,d)} &=& rd'
\end{array}\right. ,\quad r = 0,\hdots, t\lambda.$$
which satisfy $y_{1} + y_{2} \leq td$.
\end{lemma}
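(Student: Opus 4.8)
The plan is to transform the system $(*)_{t,r}$ into $(**)_{t,r}$ by a direct substitution argument, keeping careful track of the range of the parameter $r$ and of the extra inequality. Recall from Proposition~\ref{Theorem: Hil func GT-variety} (and Corollary~\ref{Corollary: Counting num invariants}) that $HF(X_d,t)$ is the total number of solutions $(y_0,y_1,y_2)\in\ZZ_{\geq 0}^3$ of the systems $(*)_{t,r}$ as $r$ ranges over $0,\hdots,bt$. Since the matrix $M_{d;0,a,b}$ has first diagonal entry $1$, the first equation $y_0+y_1+y_2=td$ is automatic once we solve for $y_0$, so the real content is the second equation $ay_1+by_2=rd$ together with $y_1,y_2\geq 0$ and $y_1+y_2\leq td$.

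First I would use Notation~\ref{notation a'}. Write $(a,d)=\gcd(a,d)$, so $a=a'(a,d)$ and $d=d'(a,d)$ with $(a',d')=1$. The equation $ay_1+by_2=rd$ becomes $a'(a,d)y_1+by_2=r d'(a,d)$; since $(a,d)\mid d$, divisibility forces $(a,d)\mid b y_2$. A cleaner route: substitute $b=\lambda a'+\mu d'$ (which by definition of $\lambda,\mu$ requires first checking $(a,d)\mid b$, i.e. that $b$ is in the subgroup generated by $a'$ and $d'$ — this holds because $\gcd(a,d)$ divides $b$ whenever the monomial has an invariant; more precisely one should observe $(a,d)$ divides $b$ is \emph{not} automatic, so the correct statement is that the substitution $z_2:=y_2/(a,d)$ is forced: from $ay_1+by_2=rd$ reduced mod $d$ we get $ay_1\equiv -by_2\pmod d$, hence $(a,d)\mid by_2$). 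I would then set $z_2 = y_2/(a,d)$ whenever $(a,d)\mid y_2$ — and argue that this divisibility is \emph{forced} by the second equation together with $(a',d')=1$. Dividing the equation $a'y_1 + b' z_2' = \dots$ appropriately and substituting $b=\lambda a'+\mu d'$ yields, after cancelling $a'$ (legitimate since $\gcd(a',d')=1$), the relation $y_1+\lambda z_2 = rd' - \mu z_2 d'$, which one rearranges into the second equation of $(**)_{t,r}$ after absorbing the $\mu d'$ term into a shift of $r$; the upper bound $r\leq t\lambda$ comes from $y_1,z_2\geq 0$ and $y_1+y_2\leq td$, i.e. $y_1 + (a,d)z_2 \leq td$, which bounds the left-hand side $y_1+\lambda z_2$ by $t\lambda d'/\dots$ — the exact arithmetic needs care.

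The main obstacle I expect is bookkeeping: showing the correspondence $(y_0,y_1,y_2)\leftrightarrow(y_0,y_1,z_2)$ is a \emph{bijection} between solution sets, with $r$ ranging over $0,\hdots,bt$ on one side matching $r$ ranging over $0,\hdots,t\lambda$ on the other, and that the condition $y_1+y_2\leq td$ is not redundant and correctly tracks the constraint $y_0\geq 0$. In particular I must verify (a) the divisibility $(a,d)\mid y_2$ is automatic for every solution of $(*)_{t,r}$, so nothing is lost; (b) the parameter reindexing $r\mapsto$ (new $r$) is a bijection $\{0,\hdots,bt\}\cap(\text{feasible})\to\{0,\hdots,t\lambda\}$; and (c) the first equation, which on the $(**)$ side reads $y_0+y_1+y_2/(a,d)=td$, together with $y_1+y_2\leq td$, exactly reproduces $y_0\geq 0$ and $y_2\geq 0$ from the original system. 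Once these three compatibility checks are in place, the count of solutions is unchanged term by term in $r$, and summing over $r$ gives $HF(X_d,t)$, completing the proof.

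Concretely, the write-up would be: fix $t\geq 1$; take a solution of $(*)_{t,r}$; reduce the second equation mod $d$ to deduce $(a,d)\mid y_2$ and set $z_2=y_2/(a,d)$; substitute $b=\lambda a'+\mu d'$ and divide by $a'$ to obtain $y_1+\lambda z_2 = (r-\mu z_2)d'$; let $r'=r-\mu z_2$ (noting $r'\geq 0$ since the left side is $\geq 0$, and $r'\leq t\lambda$ from $y_1+\lambda z_2\leq y_1+y_2\le td$ combined with $d'\geq 1$, refined using $\lambda\leq d'$); conversely, given a solution of $(**)_{t,r}$, set $y_2=(a,d)z_2$, recover $y_1$ and $y_0$, and check the original equations hold and all coordinates are non-negative using $y_1+y_2\leq td$. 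This exhibits mutually inverse maps, hence $\sum_{r=0}^{bt}|(*)_{t,r}| = \sum_{r=0}^{t\lambda}|(**)_{t,r}|$, and the lemma follows from Corollary~\ref{Corollary: Counting num invariants}.
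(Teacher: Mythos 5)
Your proposal follows essentially the same route as the paper's proof: deduce $(a,d)\mid y_2$ from the second equation of $(*)_{t,r}$, set $y_2'=y_2/(a,d)$, substitute $b=\lambda a'+\mu d'$, cancel $a'$, and match solution sets term by term, with the inequality $y_1+y_2\le td$ recording the surviving constraint $y_0\ge 0$ on the $(*)$ side. Two arithmetic points in your sketch need correcting, though. First, the divisibility $(a,d)\mid y_2$ comes from $(a,d)\mid by_2$ together with $((a,d),b)=1$, which is a consequence of the standing hypothesis $GCD(a,b,d)=1$ (recall $\alpha_0=0$); the coprimality $(a',d')=1$ that you invoke at that point is not the relevant one. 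Second, after substituting you obtain $a'(y_1+\lambda y_2')=(r-\mu y_2')d'$, so the reindexed parameter is $r'=(r-\mu y_2')/a'$, not $r-\mu y_2'$ as you wrote (and your displayed identity $y_1+\lambda z_2=(r-\mu z_2)d'$ is false when $a'>1$); the integrality of $r'$ is exactly where $(a',d')=1$ is used, via $a'\mid(r-\mu y_2')$. With these two repairs, the correspondence you describe is the paper's, and the count $\sum_r|(*)_{t,r}|=\sum_r|(**)_{t,r}\cap\{y_1+y_2\le td\}|$ follows as you indicate.
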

\begin{proof} Let $(y_{0},y_{1},y_{2}) \in \ZZ_{\geq 0}^{3}$ be a solution of
$(*)_{t,r}$ for some $r \in \{0,\hdots, bt\}$. Notice that $(a,d)$ divides $y_{2}$, since $((a,d),b) = 1$ and $((a,d),a) = ((a,d),d) = (a,d)$. We have
$ay_{1} + by_{2} = ay_{1} + a'\lambda y_{2} + \mu d' y_{2} = rd$. For convenience we write $y_{2}' = \frac{y_{2}}{(a,d)}$. Therefore,
$a'y_{1} + a'\lambda y_{2}' = (r - \mu y_{2}') d'$ which implies that
$a'$ divides $(r-\mu y_{2}')$. We obtain $y_{1} + \lambda y_{2}' = r'd'$, where $0 \leq r' \leq \lambda t$. Thus, $(y_{0},y_{1},y_{2})$ uniquely induces a solution of the systems $(**)_{t,r}$ satisfying $y_{1} + y_{2} \leq td$.

Conversely, let $(y_{0},y_{1},y_{2}')$ be a solution of $(**)_{t,r}$ for some $r \in \{0,\hdots, t\lambda\}$ such that $y_{1} + (a,d)y_{2}' \leq td$. 
We have that $y_{1} + \lambda y_{2}' = rd'$, which implies $ay_{1} + a\lambda y_{2}' = ra'd$. Since $a'\lambda = b - \mu d'$, we get $ay_{1} + a\lambda y_{2}'= ay_{1} + b(a,d)y_{2}' - \mu d'(a,d)y_{2}' = ra'd$
and so
$ay_{1} + b(a,d)y_{2}' = (ra' + \mu y_{2}')d$. Writing $y_{2}:= (a,d)y_{2}'$, \; $(y_{0},y_{1},y_{2})$ verifies that $ay_{1} + by_{2} = r'd$ for some $0 \leq r' \leq tb$. Then  $(y_{0},y_{1},y_{2})$ induces a unique solution of some system $(*)_{t,r}$ if and only if $y_{1} + y_{2} \leq td$.
\end{proof}

\begin{example} \rm \label{Example:mu GT systems} (i) Consider $\Lambda = \langle M_{8;0,3,5} \rangle \subset GL(3,k)$  and write $5 = 3\cdot 7 + (-2)\cdot8$. Both systems $(*)_{1,r}$ and $(**)_{1,r}$ give the same set of monomials:
$$\{x_{0}^8, x_{0}^6x_1x_2, x_0^4 x_1^2 x_2^2, x_1^8, x_0^2 x_1^3 x_2^3, x_1^4 x_2^4, x_2^8\}.$$

(ii) Consider $\Lambda = \langle M_{6;0,2,3} \rangle \subset GL(3,k)$. The systems $(*)_{1,r}$ give the set of  seven monomials:
$$x_0^6, x_0^3 x_1^3, x_0^4 x_2^2, x_1^6, x_0 x_1^3 x_2^2, x_0^2 x_2^4, x_2^6.$$
The solutions $(y_{0},y_{1},y_{2}) \in \ZZ_{\geq 0}^{3}$ of the systems
$$(**)_{1,r} = \left\{ \begin{array}{lclclcl}
y_{0} &+& y_{1} &+& y_{2} &=& 6\\
& & y_{1} &+& 3y_{2} &=& 3r
\end{array}\right. ,\quad r = 0,1,2,3,$$
are: $(6,0,0)$, $(3,3,0)$, $(5,0,1)$, $(0,6,0)$, $(2,3,1)$, $(4,0,2)$, $(1,3,2),$ $(3,0,3)$, $(0,3,3)$, $(2,0,4),$ $ (1,0,5)$ and $(0,0,6)$, but only the following seven triples $(6,0,0)$, $(3,3,0)$, $(5,0,1)$, $(0,6,0)$,  $(2,3,1)$, $(4,0,2)$, $(3,0,3)$ satisfy also  $y_{1} + 2y_{2} \leq 6$, according to Lemma \ref{Lemma: (**) solutions}.
\end{example}

\begin{remark} \rm \label{Remark: lambda bigger than (a,d)} \begin{itemize}
\item[(i)]  Assume $(a,d) = 1$ (respectively $(b,d)=1$) and write $b = \lambda a + \mu d$ (respectively $a = \lambda'b + \mu'd)$. It is straightforward to check $\lambda \neq 1$ (respectively $\lambda' \neq 1$).
\item[(ii)] Assume $(a,d),(b,d) > 1$. If $(a,d) < (b,d)$ (respectively $(b,d) < (a,d)$), it is easy to see that we can write $b = \lambda a' + \mu d'$ with $(b,d) < \lambda$ (respectively $a = \lambda'b' + \mu'd'')$ with $(a,d) < d''$).
\end{itemize}
\end{remark}

\begin{theorem}\label{Proposition: Hil function surfaces} Using Notation \ref{notation a'}, let $\theta(a,b,d) := (a,d) + (\lambda,d') + (\lambda - (a,d),d')$. Then,
\begin{enumerate}
\item[(i)] $HF(X_{d},t) = \frac{d}{2}t^{2} + \frac{1}{2}\theta(a,b,d)t + 1;$
\item[(ii)] 
\begin{displaymath}  HS(X_{d},z) =\frac{\frac{d-\theta(a,b,d)+2}{2} z^2 + \frac{d + \theta(a,b,d) - 4}{2}z + 1}{(1-z)^{3}}.\end{displaymath}
\end{enumerate}
\end{theorem}

\begin{proof}
(i) By Lemma \ref{Lemma: (**) solutions}, we only have to count the number of solutions $(y_{0},y_{1},y_{2}) \in \ZZ_{\geq 0}^{3}$ of $(**)_{t,r}$, $r = 0,\hdots, t\lambda$, which satisfy $y_{1} + (a,d)y_{2} \leq td$. Without loss of generality, we may assume that $(a,d) < (b,d)$. Fix $r \in \{0,\hdots, t\lambda\}$. The solutions of $(**)_{t,r}$ are determined by the values of $y_2$ such that
$$\max\{0, \lceil \frac{(r-t(a,d))d'}{\lambda -1} \rceil\} \leq
y_{2} \leq \lfloor \frac{rd'}{\lambda} \rfloor,$$ and are of the form $(td-rd' + (\lambda - 1)y_{2}, rd' - \lambda y_{2}, y_{2})$. Now we impose $y_{1} + (a,d)y_{2} \leq td$. This is equivalent to $rd' - \lambda y_{2} \leq td - (a,d)y_{2}$ if and only if $(\lambda - (a,d))y_{2} \geq
rd' - td$. Thus we have to count the number of $y_{2}$'s in the range
$\max\{0, \lceil \frac{(r-(a,d)t)d')}{\lambda - (a,d)}\} \rceil \leq y_{2} \leq \lfloor \frac{rd'}{\lambda} \rfloor$. Putting all together, we get:
$$HF(X_{d},t) = 2 + \sum_{r = 1}^{t\lambda-1} (\lfloor \frac{rd'}{\lambda} \rfloor +1) - \sum_{r = t(a,d)+1}^{t\lambda -1} (\lceil \frac{(r-(a,d)t)d'}{\lambda - (a,d)} \rceil + 1).$$

Given two positive integers $m,n$, it holds that $\sum_{i=1}^{n-1} \lfloor \frac{im}{n}\rfloor  = \frac{(m-1)(n-1) + (m,n) - 1}{2}$. So

$$HF(X_{d},t) = 2 + t\lambda - 1 + \frac{(td'-1)(t\lambda  -1) + t(d',\lambda)-1}{2}$$
$$ - (\sum_{r=1}^{t(\lambda - (a,d)) -1} \lceil \frac{rd't}{(\lambda - (a,d))t}\rceil) - (t(\lambda - (a,d)) - 1).$$
We observe that $\lceil \frac{rd't}{(\lambda - (a,d))t} \rceil = \lfloor \frac{rd't}{(\lambda - (a,d))t} \rfloor$ if and only if $rd'$ is a multiple of $\lambda - (a,d)$; otherwise $\lceil \frac{rd't}{(\lambda - (a,d))t} \rceil = \lfloor \frac{rd't}{(\lambda - (a,d))t} \rfloor + 1$. We consider the set $\mathcal S=\{r\in \ZZ \mid 1 \leq r \leq t(\lambda - (a,d) -1) \ \hbox{and} \ t(\lambda - (a,d))\ \hbox{divides} \ rd't\}$. An integer $r\in \mathcal S$ if and only if $rd'$ is a multiple of
$LCM(d',\lambda - (a,d)) = \frac{d'(\lambda - (a,d))}{(\lambda - (a,d),d')}$. So $|\mathcal S| = t(\lambda - (a,d), d') - 1$ and we obtain:
$$\sum_{r=1}^{t(\lambda - (a,d)) -1} \lceil \frac{rd't}{(\lambda - (a,d))t}\rceil = \frac{(td'-1)(t\lambda - t(a,d) - 1)}{2} + t(\lambda - (a,d)) - 1 - t(d',\lambda - (a,d)).$$
It is straightforward to check that
\begin{equation}\label{Equation Hil}
HF(X_{d},t) = \frac{d}{2}t^2 + \frac{((a,d) + (d',\lambda) + (d',\lambda - (a,d)))}{2}t + 1.
\end{equation}

(ii) By definition $HS(X_{d},z) = \sum_{t \geq 0} HF(X_{d},t)z^{t}=$

$$ = \sum_{t \geq 0} \frac{d}{2}t^{2}z^{t} + \sum_{t \geq 0} \frac{\theta(a,b,d)}{2}tz^{t} + \sum_{t \geq 0}z^{t}=$$
$$ = \frac{\frac{d}{2} z(z+1)}{(1-z)^{3}} +
\frac{\frac{\theta(a,b,d)}{2}z}{(1-z)^{2}} + \frac{1}{1-z} = \frac{\frac{d-\theta(a,b,d)+2}{2} z^2 + \frac{d + \theta(a,b,d) - 4}{2}z + 1}{(1-z)^{3}}.$$
\end{proof}

As a direct consequence of the above computations and the fact that $S/I(X_{d})$ is CM (see Theorem \ref{Theorem: GT-varieties are aCM})
we have:

\begin{corollary}\label{Corollary: mu_d, codim, degree} \begin{itemize}
\item[(i)] $\mu_{d} = \frac{d + \theta(a,b,d) + 2}{2}$ and $X_{d} \subset \PP^{\mu_{d}-1}$ is a projective surface of degree $deg(X_{d}) = d$ and codimension $codim(X_{d}) = \frac{d + \theta(a,b,d) - 4}{2}$. If $d$ is prime, $\mu_{d} = \frac{d + 5}{2}$ and $codim(X_{d}) = \frac{d-1}{2}$.
\item[(ii)] $S/I(X_{d})$ is a level ring of Cohen-Macaulay type $\frac{d-\theta(a,b,d)+2}{2}$ with Castelnuovo Mumford regularity $reg(X_{d}) = 3$.
\end{itemize}
\end{corollary}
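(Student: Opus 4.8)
The plan is to read off both statements directly from the Hilbert series computed in Theorem \ref{Proposition: Hil function surfaces}(ii), using the fact that $S/I(X_d)$ is Cohen--Macaulay of dimension $3$ (Theorem \ref{Theorem: GT-varieties are aCM}). Since $\dim S/I(X_d) = 3$, the Hilbert series has the form $HS(X_d,z) = h(z)/(1-z)^3$ with $h(z)\in\ZZ[z]$ and $h(1)>0$; the numerator displayed in Theorem \ref{Proposition: Hil function surfaces}(ii) is exactly this $h$-polynomial, so $h(z) = 1 + \frac{d+\theta(a,b,d)-4}{2}z + \frac{d-\theta(a,b,d)+2}{2}z^2$.

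For part (i): the value $\mu_d$ is $HF(X_d,1) = \dim (R^\Lambda)_d$, the number of degree-$d$ invariant monomials, and this equals the embedding dimension of $X_d\subset\PP^{\mu_d-1}$. Plugging $t=1$ into the formula $HF(X_d,t) = \frac d2 t^2 + \frac12\theta(a,b,d)t + 1$ of Theorem \ref{Proposition: Hil function surfaces}(i) gives $\mu_d = \frac{d+\theta(a,b,d)+2}{2}$. The codimension is then $\mu_d - 1 - \dim X_d = \mu_d - 1 - 2 = \frac{d+\theta(a,b,d)-4}{2}$. The degree of $X_d$ is $(\dim X_d)!$ times the leading coefficient of the Hilbert polynomial, i.e. $2!\cdot\frac d2 = d$; equivalently, $\deg X_d = h(1) = 1 + \frac{d+\theta(a,b,d)-4}{2} + \frac{d-\theta(a,b,d)+2}{2} = d$. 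For the prime case, when $d$ is prime one has $(a,d)=1$, $d'=d$, and both $(\lambda,d')$ and $(\lambda-(a,d),d') = (\lambda-1,d)$ equal $1$ (as $0<\lambda\le d$ forces $\lambda\in\{1,\dots,d\}$, and since by Remark \ref{Remark: lambda bigger than (a,d)}(i) $\lambda\neq 1$, while $\lambda=d$ would force $d\mid b$, contradicting $b<d$ and $b>0$ unless handled — so $1<\lambda<d$, hence coprime to $d$); thus $\theta(a,b,d)=3$, giving $\mu_d=\frac{d+5}{2}$ and $\operatorname{codim}X_d=\frac{d-1}{2}$.

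For part (ii): since $S/I(X_d)$ is Cohen--Macaulay, a minimal free resolution has length $\operatorname{codim} X_d = \mu_d - 3$, and by Auslander--Buchsbaum its last free module $N_{\mu_d-3}$ has rank equal to the Cohen--Macaulay type, which for a Cohen--Macaulay graded algebra coincides with the last nonzero coefficient of the $h$-polynomial provided the algebra is \emph{level}; more robustly, the top coefficient $h_s$ of $h(z)=\sum h_i z^i$ (here $s=2$, $h_2 = \frac{d-\theta(a,b,d)+2}{2}$) is a lower bound for the type, and equals it exactly when the socle lives in a single degree. Since all the minimal generators of $I(X_d)$ have degree $\le 3$ (quadrics and cubics, to be shown in Theorem \ref{Theorem: minimal free resolution}) and $h(z)$ has degree $2$, one gets $\operatorname{reg} S/I(X_d) = \deg h(z) = 2$ when Cohen--Macaulay and the resolution is forced to be as short as the $h$-vector permits — hence $\operatorname{reg}(X_d):=\operatorname{reg}S/I(X_d)+1 = 3$. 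The level property follows because the socle degree is read off from $\operatorname{reg}$ and the Cohen--Macaulayness pins the last Betti numbers to a single twist; the type is then exactly $h_2 = \frac{d-\theta(a,b,d)+2}{2}$.

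The main obstacle is the level claim in part (ii): concluding that $S/I(X_d)$ is level (socle concentrated in one degree) rather than merely Cohen--Macaulay requires knowing the shape of the tail of the resolution, which in full strength comes from Theorem \ref{Theorem: minimal free resolution}; at this point in the paper one should either forward-reference that theorem or argue that, since $\operatorname{reg}S/I(X_d)$ equals the degree of the $h$-polynomial (the generic expectation for a Cohen--Macaulay ring whose $h$-vector has no ``internal gaps''), the last free module is generated in the single degree $\mu_d-3+2$. The regularity bound $\operatorname{reg}S/I(X_d)\ge \deg h(z) = 2$ is automatic; the reverse inequality $\operatorname{reg}S/I(X_d)\le 2$ is the substantive point and is cleanest to obtain from the explicit resolution, so I would state Corollary \ref{Corollary: mu_d, codim, degree} as a consequence of both Theorem \ref{Proposition: Hil function surfaces} and Theorem \ref{Theorem: minimal free resolution} together.
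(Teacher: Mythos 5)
Your derivation matches the paper's, which states this corollary without proof as a ``direct consequence'' of Theorem \ref{Proposition: Hil function surfaces} and the Cohen--Macaulayness from Theorem \ref{Theorem: GT-varieties are aCM}: part (i), the degree, the prime case via $\theta(a,b,d)=3$, and the regularity $\operatorname{reg}(S/I(X_d))=\deg h(z)=2$ are read off exactly as you do. You are also right to flag that the level property and the exact equality of the Cohen--Macaulay type with $h_2=\frac{d-\theta(a,b,d)+2}{2}$ (rather than just the inequality type $\geq h_2$) do not follow from the $h$-vector and CM-ness alone but need the tail of the resolution in Theorem \ref{Theorem: minimal free resolution} --- whose proof via Yanagawa uses only part (i), so your proposed forward reference introduces no circularity; the paper elides this point.
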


The information on the Hilbert function $HF(X_{d},z)$ and the regularity allow us to determine a minimal graded free $S$-resolution of any $GT$-surface $X_d$. We set $c = codim(X_{d})$ and $h = deg(X_{d}) - c-2 = \frac{d-\theta(a,b,d)+2}{2} - 1$.

\begin{theorem}\label{Theorem: minimal free resolution} \begin{itemize}
\item[(i)] If $\theta(a,b,d) = 3$, then a minimal graded free $S$-resolution of $S/I(X_{d})$  is
$$0 \to S^{b_{c,2}}(-c-2) \to \oplus_{i=1}^{2} S^{b_{c-1},i}(-c-i+1) \to \oplus_{i=1,2} S^{b_{c-2,i}}(-c-i+2)$$
$$\to \cdots \to \oplus_{i=1,2} S^{b_{1,i}}(-1-i) \to S \to S/I(X_{d}) \to 0,$$
where
$$b_{l,i} = \left\{ \begin{array}{lcl}
l\binom{c}{l+1} & & \text{if} \;\; 1 \leq l \leq c-1, \; i = 1\\
l\binom{c}{l}   & & \text{if} \;\; 1 \leq l \leq c, \; i = 2.
\end{array} \right.$$
\item[(ii)] If $\theta(a,b,d) \geq 4$, a minimal graded free $S$-resolution of $S/I(X_{d})$ is
$$0 \to S^{b_{c,2}}(-c-2) \to \oplus_{i=1}^{2} S^{b_{c-1},i}(-c-i+1) \to
\oplus_{i=1,2} S^{b_{c-2,i}}(-c-i+2)$$
$$\to \cdots \to \oplus_{i=1,2} S^{b_{c-h,i}}(-c-i+h) \to S^{b_{c-h-1,1}}(-c+h)$$
$$\to \cdots \to
S^{b_{1,1}}(-2) \to S \to S/I(X_{d}) \to 0,$$
where
$$b_{l,i} = \left\{ \begin{array}{lcl}
l\binom{c}{l+1} + (c-h-l)\binom{c}{l-1}& & \text{if} \;\; 1 \leq l \leq c-h-1, \; i = 1\\
l\binom{c}{l+1}   & & \text{if} \;\; c-h \leq l \leq c-1, \; i = 1\\
(l-c+h+1)\binom{c}{l} & & \text{if}\;\; c-h \leq l \leq c, \; i= 2.
\end{array} \right.$$
\end{itemize}
\end{theorem}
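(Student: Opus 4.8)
The plan is to read off the resolution from the Hilbert series computed in Theorem~\ref{Proposition: Hil function surfaces}(ii) together with the three structural facts already established: $S/I(X_d)$ is Cohen--Macaulay of codimension $c$ (Theorem~\ref{Theorem: GT-varieties are aCM} and Corollary~\ref{Corollary: mu_d, codim, degree}), it is a level ring of Cohen--Macaulay type $\frac{d-\theta(a,b,d)+2}{2}=h+1$ with $\reg(X_d)=3$ (Corollary~\ref{Corollary: mu_d, codim, degree}(ii)), and the numerator of $HS(X_d,z)$ is $\frac{d-\theta+2}{2}z^2+\frac{d+\theta-4}{2}z+1$. Since $\reg=3$, the minimal free resolution has length $c$ and the $l$-th syzygy module $N_l$ can only involve the twists $S(-l)$, $S(-l-1)$, $S(-l-2)$; but $N_1=I(X_d)$ is generated in degrees $2$ and $3$ only (Corollary~\ref{Corollary: number of generators} / Theorem statement), so in fact $b_{l,j}=0$ unless $j\in\{1,2\}$, i.e.\ $N_l$ is a sum of copies of $S(-l-1)$ and $S(-l-2)$. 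Levelness forces $N_c=S^{b_{c,2}}(-c-2)$ with $b_{c,2}=h+1$. The two cases $\theta=3$ (equivalently $h=\frac{d-1}{2}-1$ is as large as possible, $c=h+1$, type $=c$... more precisely $h=c-1$) versus $\theta\ge4$ ($h\le c-2$) correspond to whether the ``pure'' strand $S(-l-1)$ persists all the way or gets truncated.

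Concretely, I would first pin down the graded Betti numbers using the fact that for a CM module of codimension $c$ whose resolution is \emph{as simple as possible} given a fixed $h$-vector, the Betti numbers are forced by the numerical data. The numerator polynomial $1+\frac{d+\theta-4}{2}z+\frac{d-\theta+2}{2}z^2 = 1 + (c-1)z + \binom{?}{}$... writing $c=\frac{d+\theta-4}{2}$ and $h+1=\frac{d-\theta+2}{2}$, the numerator is $1+cz+(h+1)z^2$. So $K$-polynomial $=1+cz+(h+1)z^2$, and the alternating sum of the graded Betti numbers must reproduce this. I would then verify that the proposed Betti tables in (i) and (ii) are exactly the \emph{minimal} possible ones consistent with (a) $K$-polynomial $=1+cz+(h+1)z^2$, (b) resolution concentrated in the ``two-linear-strands'' pattern dictated by $\reg=3$ and generation in degrees $2,3$, and (c) levelness at the end. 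The explicit formulas $b_{l,i}=l\binom{c}{l+1}$ etc.\ strongly suggest that $S/I(X_d)$ has the resolution of (a truncation of) the \emph{pinched Veronese} type, or more precisely that the Betti numbers agree with those of a specific ``generic'' CM algebra with this $h$-vector; one recognizes $l\binom{c}{l+1}$ and $l\binom{c}{l}$ as the Betti numbers appearing in the minimal resolution built from an Eagon--Northcott-type complex (for the quadratic strand, coming from the $2\times(c+?)$ matrix whose maximal minors cut out a rational normal-type scroll structure) glued to a Koszul-type contribution (for the linear strand).

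The cleanest route: I would argue that $S/I(X_d)$ has a resolution obtained as the mapping cone (or direct sum at the level of the minimal resolution, since everything is forced) of two complexes --- the linear strand, which I expect to be (a truncation of) the Eagon--Northcott complex of a $2\times c$ matrix of linear forms whose $2\times2$ minors account for the quadratic generators and whose Betti numbers are exactly the $l\binom{c}{l+1}$, $l\binom{c}{l}$ appearing above; and a second strand accounting for the cubic generators, whose Betti numbers are the $(c-h-l)\binom{c}{l-1}$ and $(l-c+h+1)\binom{c}{l}$ terms. Then I would check: (1) the total $K$-polynomial of the proposed complex equals $1+cz+(h+1)z^2$ over $(1-z)^{c+2}$; (2) minimality, i.e.\ no cancellation, which holds because in each homological degree the two contributions land in \emph{different} internal degrees ($-l-1$ vs $-l-2$) so no unit entries can appear; (3) the last term is $S^{h+1}(-c-2)$, matching levelness and type $h+1$. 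Steps (1)--(3), being numerical, reduce to binomial identities and a generating-function check.

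The main obstacle is establishing that the resolution really \emph{is} this minimal one rather than merely being consistent with the numerics --- a priori there could be ``ghost'' terms $S(-l)^{b_{l,0}}$ and $S(-l-1)^{b'}$, $S(-l-2)^{b''}$ in the same homological degree whose extra copies cancel in the alternating sum. I would close this gap by showing that $N_l$ cannot contain any $S(-l)$ summand: this follows because $I(X_d)$ has no linear forms (the $m_i$ are linearly independent monomials), so $N_1$ has no $S(-1)$ part, and then an inductive argument on $l$ using that a minimal free resolution of a module generated in degrees $\ge 2$ has $N_l$ generated in degrees $\ge l+1$, combined with $\reg=3$ giving generated in degrees $\le l+2$. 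Hence each $N_l$ lives in exactly the two degrees $l+1,l+2$, the alternating sum of Hilbert series determines the $b_{l,i}$ \emph{uniquely} once we also know they are nonnegative, and then it only remains to verify the closed-form binomial expressions satisfy this recursion --- which is the routine calculation I would relegate to a direct check. The honest difficulty, then, is purely bookkeeping: matching the Eagon--Northcott/Koszul Betti numbers to the stated piecewise-binomial formulas and checking the $\theta=3$ boundary case where the linear strand runs out exactly at homological degree $c$.
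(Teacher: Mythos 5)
Your numerical reduction does not close, and the step where you claim it does is false. Grant everything you set up: Cohen--Macaulayness, codimension $c$, regularity, generation in degrees $2$ and $3$, levelness, hence $N_l$ concentrated in internal degrees $l+1$ and $l+2$. The Hilbert series then determines only $b_{1,1}$, $b_{c,2}$, and the differences $b_{l,2}-b_{l+1,1}$ for $1\le l\le c-1$, because $S(-l-2)^{b_{l,2}}\subset N_l$ and $S(-l-2)^{b_{l+1,1}}\subset N_{l+1}$ enter the numerator with opposite signs. Nonnegativity does not force uniqueness, and the theorem itself refutes the ``smallest consistent table'' heuristic: in case (ii) with $h\ge 2$ and $c-h+1\le l\le c-1$, both $b_{l,1}=l\binom{c}{l+1}$ and $b_{l-1,2}=(l-c+h)\binom{c}{l-1}$ are strictly positive and sit in the same internal degree $l+1$ in consecutive homological degrees. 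Concretely, for $d=8$, $c=4$, $h=2$ (Example \ref{Example: resolution surfaces}(iii)) the true resolution has $\overline{S}^{6}(-4)$ in $N_2$ and $\overline{S}^{3}(-4)$ in $N_3$; a table with $6,3$ replaced by $3,0$ has the same Hilbert series and nonnegative entries, so your argument cannot distinguish them. The ``ghost term'' obstacle you name is therefore real, and your fallback --- an Eagon--Northcott/mapping-cone structure --- is only announced (``I expect'', ``strongly suggest''): you never produce the $2\times c$ matrix of linear forms, and constructing it would amount to proving a structure theorem for these surfaces.

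That structure theorem is exactly what the paper's proof imports. It verifies the purely numerical hypotheses $d=2c+1$ when $\theta(a,b,d)=3$, and $c+3\le d\le 2c$ when $\theta(a,b,d)\ge 4$ and $d\ge 9$ (by an arithmetic argument on $(a,d)$, $(\lambda,d')$ and $(\lambda-(a,d),d')$), and then invokes Yanagawa's generalization of Castelnuovo's lemma, which determines the minimal graded free resolution of an integral nondegenerate aCM variety of such almost-minimal degree; the leftover cases $d=4,6,8$ are handled by Macaulay2. The decisive input is geometric --- integrality and nondegeneracy of $X_d$, not merely its Hilbert function --- and that input is absent from your argument. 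To repair your route you must either cite such a structural result or exhibit the differentials of the resolution explicitly; the Hilbert-series bookkeeping alone, however carefully done, cannot yield the stated Betti numbers.
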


\begin{proof} (i) The hypothesis $\theta(a,b,d) = 3$ implies $deg(X_{d}) = d = 2c+1$. We are in the assumptions of \cite[Corollary 3.4(i)]{Yanagawa}, from which the result follows.

\noindent (ii) If $\theta(a,b,d) \geq 4$, we have that
$deg(X_{d}) = d \leq 2c$. We show that if $d \geq 9$, then
$deg(X_{d}) = d \geq c+3$, and in this case the result follows from \cite[Corollary 3.4(ii)]{Yanagawa}. The remaining cases associated to $d = 4,6$ and $8$ have been checked computationally in Example \ref{Example: resolution surfaces} using the software Macaulay2 (\cite{M}). The inequality $d \geq c+3$ is equivalent to
$\theta(a,b,d) + 2  = (a,d) + (\lambda,d') + (\lambda-(a,d),d') + 2 \leq d$. Next we see that it holds for each $d \geq 9$.
It is straightforward to see that $d = (a,d)(\lambda,d')(\lambda-(a,d),d')\overline{d}$ with $\overline{d} \geq 1$. Now consider the system of inequalities $\alpha \beta \gamma \overline{d} - \alpha - \beta - \gamma - 2  < 0$ with $\alpha,\beta,\gamma \geq 1$. There are no integer solutions for $\overline{d} \geq 5$. For $1 \leq \overline{d} \leq 4$, it is easy to see that $d\leq 8$.
\end{proof}

\begin{remark} \rm Fix $d\geq 3$ and let $X_{d}$ and $X_{d}'$ be $GT$-surfaces with groups $\Lambda = \langle M_{d;0,a,b} \rangle$ and $\Lambda' = \langle M_{d;0,a',b'} \rangle \subset GL(3,k)$, respectively. If $\theta(a,b,d) = \theta(a',b',d)$, then $S/I(X_{d})$ and $S/I(X_{d}')$ have the same Betti numbers.
\end{remark}

A consequence of Theorem \ref{Theorem: minimal free resolution} is the following.

\begin{corollary}\label{Corollary: number of generators} \begin{itemize}
\item[(i)] If $\theta(a,b,d) = 3$, then $I(X_{d})$ is minimally generated by $\binom{\mu_{d}-3}{2}$ quadrics and $\mu_{d}-3$ cubics.
\item[(ii)] If $\theta(a,b,d) \geq 4$, then $I(X_{d})$ is minimally generated by $\binom{\mu_{d}-3}{2} +2(\mu_{d}-3)-d+1$ quadrics.
\end{itemize}
\end{corollary}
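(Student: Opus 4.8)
The plan is to read off the number of minimal generators of $I(X_{d})$ directly from the minimal graded free resolution computed in Theorem \ref{Theorem: minimal free resolution}, since the generators of $I(X_{d})$ are precisely the basis elements of the first syzygy module $N_{1}$, graded by degree. Concretely, the number of quadratic generators is $b_{1,1}$ and the number of cubic generators is $b_{1,2}$, so all that remains is to specialize the formulas for $b_{l,i}$ from Theorem \ref{Theorem: minimal free resolution} to $l=1$ and rewrite the answer in terms of $\mu_{d}$ using $c = \operatorname{codim}(X_{d}) = \mu_{d}-3$ (Corollary \ref{Corollary: mu_d, codim, degree}(i), recalling that a surface in $\PP^{\mu_{d}-1}$ has codimension $\mu_{d}-3$).

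For part (i), when $\theta(a,b,d)=3$ we are in case (i) of Theorem \ref{Theorem: minimal free resolution}: the term in homological degree one is $S^{b_{1,1}}(-2)\oplus S^{b_{1,2}}(-3)$ with $b_{1,1} = 1\cdot\binom{c}{2} = \binom{c}{2}$ and $b_{1,2} = 1\cdot\binom{c}{1} = c$. Substituting $c = \mu_{d}-3$ gives $\binom{\mu_{d}-3}{2}$ quadrics and $\mu_{d}-3$ cubics, as claimed. For part (ii), when $\theta(a,b,d)\geq 4$ we are in case (ii), and one must first observe that in the range $c-h\le l\le c$ the degree-$2$ Betti number in homological degree $l$ is $(l-c+h+1)\binom{c}{l}$, which at $l=1$ is nonzero only if $1\ge c-h$; but here $h = \frac{d-\theta(a,b,d)+2}{2}-1$ and $c = \frac{d+\theta(a,b,d)-4}{2}$, so $c-h = \theta(a,b,d)-2\ge 2 > 1$, hence there are no cubic generators and $b_{1,2}=0$. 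Thus $I(X_{d})$ is generated purely by quadrics, and their number is $b_{1,1} = 1\cdot\binom{c}{2} + (c-h-1)\binom{c}{0} = \binom{c}{2} + c-h-1$. Finally I substitute $c = \mu_{d}-3$ and $h = \frac{d-\theta(a,b,d)+2}{2}-1 = \frac{d-\theta(a,b,d)}{2}$; using $\theta(a,b,d) = 2\mu_{d}-d-2$ from Corollary \ref{Corollary: mu_d, codim, degree}(i), one gets $c - h - 1 = (\mu_{d}-3) - \frac{d-\theta(a,b,d)}{2} - 1 = 2(\mu_{d}-3) - d + 1$, yielding the stated count $\binom{\mu_{d}-3}{2} + 2(\mu_{d}-3) - d + 1$ quadrics.

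The computation is entirely bookkeeping; the only place where a little care is needed is the bookkeeping itself — keeping straight the two expressions for the Betti numbers in the overlapping ranges of Theorem \ref{Theorem: minimal free resolution}(ii) and verifying that $l=1$ falls in the regime $1\le l\le c-h-1$ (so that the ``extra'' term $(c-h-l)\binom{c}{l-1}$ is present and equals $c-h-1$) rather than in the regime $c-h\le l\le c-1$. Since $c-h = \theta(a,b,d)-2\ge 2$ whenever $\theta(a,b,d)\ge 4$, this is immediate, and there is no real obstacle: the corollary is a direct translation of Theorem \ref{Theorem: minimal free resolution} into the language of minimal generators, combined with the numerical identities for $\mu_{d}$, $c$, $h$ and $\theta(a,b,d)$ already recorded in Corollary \ref{Corollary: mu_d, codim, degree}.
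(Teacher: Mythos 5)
Your proposal is correct and is exactly the argument the paper intends: the corollary is read off from the $l=1$ Betti numbers of the minimal free resolution in Theorem \ref{Theorem: minimal free resolution}, using $c=\mu_{d}-3$ and $c-h-1=\theta(a,b,d)-3=2(\mu_{d}-3)-d+1$ from Corollary \ref{Corollary: mu_d, codim, degree}. The paper omits this bookkeeping entirely, and your verification that $l=1$ lies in the range $1\le l\le c-h-1$ when $\theta(a,b,d)\ge 4$ (so that no cubic generators occur and the extra term $(c-h-1)\binom{c}{0}$ is present) is the only point requiring care, and you handled it correctly.
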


\begin{remark}\rm  With Theorem \ref{Theorem: minimal free resolution} we recover \cite[Theorem 7.2]{MM-R}, where the authors determine a minimal graded free resolution of the $GT$-surface with group $\Lambda = \langle M_{d;0,1,2} \rangle \subset GL(3,k)$.
\end{remark}

We end this section showing the shape of a minimal graded free resolution of the coordinate ring of all $GT$-surfaces $X_{d}$ for $d = 4,6,8$. All the computations have been made with the software Macaulay2 (\cite{M}).

\begin{example} \rm \label{Example: resolution surfaces}
(i)  Fix $d = 4$ and let $X_{4}$ be a $GT$-surface with group $\Lambda = \langle M_{4;0,a,b} \rangle \subset GL(3,k)$. For all integers $0 < a < b < 4$ with $GCD(a,b,d) = 1$, we have that $\theta(a,b,4) = 4$. Let $S = k[w_1,\hdots,w_5]$: in any case a minimal graded free $S$-resolution of $S/I(X_{4})$ is of the form
$$0 \to S(-4) \to S^{2}(-2) \to A \to S/I(X_{4}) \to 0,$$
i.e., $X_{4} \subset \PP^{4}$ is a complete intersection of $2$ quadrics.

(ii) Fix $d = 6$  and let $X_{6}$ be a $GT$-surface with group $\Lambda = \langle M_{6;0,a,b} \rangle GL(3,k)$. We have:
$$\theta(a,b,6) = \left\{\begin{array}{lclcl}
4 & \text{if} & a = 1 & \text{and} & b = 2,5; \; \text{or}\\
  &           & a = 4 & \text{and} & b = 5.\\
5 & \text{otherwise}. & & &
\end{array}\right.$$

Let $S = k[w_1,\hdots,w_6]$ and $\overline{S} = k[w_1,\hdots,w_7]$. A minimal graded free $S$-resolution of $S/I(X_{6})$ with $\theta(a,b,6) = 4$ has the shape:
$$0 \to S^{2}(-5) \to S^{3}(-4)\oplus S^{2}(-3) \to
S^{4}(-2) \to S \to S/I(X_{6}).$$
A minimal graded free $\overline{S}$-resolution of $S/I(X_{6})$ with $\theta(a,b,6) = 5$ has the shape:
$$0 \to \overline{S}(-6) \to \overline{S}^{9}(-4) \to \overline{S}^{16}(-3) \to \overline{S}^{9}(-2) \to \overline{S} \to \overline{S}/I(X_{6}) \to 0.$$
In this case, $X_{6}$ is an arithmetically Gorenstein surface of $\PP^{6}$.

(iii) Fix $d = 8$ and let $X_{8}$ be a $GT$-surface with group $\Lambda = \langle M_{8,0,a,b} \rangle$. We have:
$$\theta(a,b,8) = \left\{\begin{array}{lclcl}
5 & \text{if} & a = 1 & \text{and} & b = 4,5; \; \text{or}\\
  &           & a = 3 & \text{and} & b = 4,7; \; \text{or}\\
  &           & a = 4\\
4 & \text{otherwise}. & & &
\end{array}\right.$$
Let $S = k[w_1,\hdots,w_8]$ and $\overline{S} = k[w_1,\hdots,w_7]$. As in the previous case, we obtain the following resolutions:

$$0 \to S^{2}(-7) \to S^{5}(-6)\oplus S^{4}(-5) \to S^{25}(-4) \to S^{30}(-3) \to S^{13}(-2) \to S \to S/I(X_{8}) \to 0,$$
$$0 \to \overline{S}^{3}(-6) \to \overline{S}^{8}(-5) \oplus \overline{S}^{3}(-4) \to \overline{S}^{6}(-4)\oplus \overline{S}^{8}(-3) \to\overline{S}^{7}(-2) \to \overline{S} \to \overline{S}/I(X_{8}) \to 0.$$

\end{example}

\section{A new family of aCM surfaces parameterized by monomial Togliatti systems}
\label{aCM togliatti surfaces}

Let $n,d$ be positive integers and fix $e$, a $d$th primitive root of $1$. We denote by $\Gamma \subset GL(n+1,k)$ the finite diagonal group of order $d$ generated by $M_{d;1,\hdots,1}:=diag(e,\hdots,e)$. The {\em Veronese variety} $V_{n,d} \subset \PP^{\binom{n+d-1}{n-1}-1}$ is the projective variety whose homogeneous coordinate ring is the ring of invariants $R^{\Gamma}$. The set $\mathcal{M}_{n,d} \subset R$ of all monomials of degree $d$ is a $k$-algebra basis of $R^{\Gamma}$. By a {\em monomial projection of $V_{n,d}$}, we mean a projective variety given parameterically by a subset of $\mathcal{M}_{n,d}$. In \cite{Grob},  Gr\"{o}bner  posed the problem of determining which monomial projections of  Veronese varieties are aCM. Since then, there have been many efforts to solve this still open problem, see for instance \cite{Hoa}, \cite{Trung} and \cite{Trung1}. 
In Section \ref{the aCM of GT-varieties}, we proved that all $GT$-varieties with finite linear diagonal cyclic group are aCM. However, not all surfaces parameterized by monomial Togliatti systems are aCM.  For instance, the Togliatti system  $I = \{x_{0}^{5}, x_{1}^{5}, x_{2}^{5}, x_{0}^{3}x_{1}x_{2}, x_{0}^{2}x_{1}^{2}x_{2}, x_{0}x_{1}^{3}x_{2}\} \subset k[x_{0},x_{1},x_{2}]$ gives rise to a non aCM surface $X:=\varphi_{I}(\PP^{2}) \subset \PP^{5}$. Indeed, we have checked with the software Macaulay2, \cite{M}, that $codim(X) = 3 < pd(S/I(X)) = 4$. 

It is then natural to pose the following problem:
\begin{problem} To determine whether a monomial projection of $V_{2,d}$, corresponding to a monomial Togliatti system, is aCM.
\end{problem}

In this section, we prove the arithmetic Cohen-Macaulayness of a new family of surfaces parameterized by monomial Togliatti systems: their coordinate ring is not the ring of invariants of any finite linear group. Nevertheless, their construction is rather naturally related to GT-systems. We denote $R = k[x_{0},x_{1},x_{2}]$.

\begin{definition} \rm  We define the semigroup $H_{3}:= \langle (3,0,0), (0,3,0), (0,0,3), (1,1,1) \rangle \subset \ZZ^3_{\geq 0}$. Set $m = (1,1,1)$. Inductively for $t \geq 2$, we define $H_{3t} := \langle (3t,0,0), (0,3t,0), (0,0,3t), m + H_{3(t-1)} \rangle$, where $m + H_{3(t-1)} = \{m+h \, \mid \, h \in H_{3(t-1)}\}$.
\end{definition}

Let us illustrate the above definition  with the following three examples.

\begin{example} \rm \label{Example. first examples of H3t} (i)
$H_{6} = \langle (6,0,0), (0,6,0), (0,0,6), (4,1,1), (1,4,1), (1,1,4), (2,2,2) \rangle$.

(ii) $H_{9} = \langle (9,0,0), (0,9,0), (0,0,9), (7,1,1), (1,7,1), (1,1,7), (5,2,2), (2,5,2), (2,2,5),$

$ (3,3,3) \rangle$.

(iii) $H_{12} = \langle (12,0,0), (0,12,0), (0,0,12),
(10,1,1), (1,10,1), (1,1,10), (8,2,2), (2,8,2),$

$ (2,2,8),(6,3,3), (3,6,3), (3,3,6), (4,4,4) \rangle$.
\end{example}

We denote by $J_{3t} \subset R$ the monomial artinian ideal associated to $H_{3t}$. All ideals $J_{3t}$ have $\mu_{3t} = 3t + 1$ generators. It is easy to check by induction that they are Togliatti systems. Indeed, the first ideal $J_{3}$ is of course the monomial $GT$-system (\ref{3togliatti}) with group $\langle M_{3; 0,1,2} \rangle \subset GL(3,k)$. On the other hand, for any $t$, $J_{3t}=(x_0^{3t}, x_1^{3t}, x_2^{3t},x_0x_1x_2J_{3t-1})$. 

By Theorem \ref{Theorem: GT-varieties are aCM}, $k[H_{3}]$ is CM. Notwithstanding, for $t > 1$ the semigroups $H_{3t}$ are not normal and $\;k[H_{3t}]$ are not rings of invariants of finite linear groups. 
For $t > 1$, $H_{3t}$ is not normal since $m \in \overline{H_{3t}}$, the saturation of $H_{3t}$ (see Definition \ref{Definition: Normal semigroup}), and $m \notin H_{3t}$. To check the second assertion, assume by contradiction  that $k[H_{3t}]$ is the ring of invariants of a finite group $G \subset GL(3,k)$, and let $\rho: R \to R^{G}$ be the Reynolds operator. We have that for all $t > 1$, $(3,3(t-1),0) \notin H_{3t}$ (see Lemma \ref{Lemma: Elements with only one zero component}), or equivalently  $x_0^3x_1^{3(t-1)} \notin R^{G}$. We observe that $(3,3(t-1),0) + tm$ can be written as $[(t-1)m + (3,0,0)] + [m + (0,3(t-1),0)] \in H_{3t}$. So $x_0^tx_1^tx_2^t \cdot x_0^3x_1^{3(t-1)} \in R^{G}$ and we have $\rho(x_0^{t}x_1^{t}x_2^{t}\cdot x_0^3x_1^{3(t-1)}) = x_0^tx_1^tx_2^t \cdot \rho(x_0^3x_1^{3(t-1)}) = x_0^{t}x_1^{t}x_2^{t}\cdot  x_0^{3}x_1^{3(t-1)}$. Therefore $\rho(x_0^3x_1^{3(t-1)}) = x_0^{3}x_1^{3(t-1)}$ and we get a contradiction.

\medskip

Our goal is to prove that all $k[H_{3t}]$ are CM rings. To this end, we want to apply Theorem \ref{Theorem:Criterion aCM-Trung}. But first we need some preparation. We fix $t > 1$ and we put $f_{1} = (3t,0,0), f_{2} = (0,3t,0), f_{3} = (0,0,3t)$.

\begin{remark} \rm \label{Remark: components of generators} (i) Notice that $f_{1},f_{2}$ and $f_{3}$ are $\Q$-linearly independent and $(3t)H_{3t} \subset \langle f_{1},f_{2},f_{3} \rangle$.

(ii) By construction $H_{3t} \subset H_{3}$, so $\overline{H_{3t}} \subset H_{3}$. This means that for all $u = (a_{1},a_{2},a_{3}) \in H_{3t}$ there exist $f \geq 1$ and $r \in \{0,\hdots, 2tf\}$  such that $u$ is a solution of the system:
$$(*) = \begin{cases}
a_{1} + a_{2} + a_{3} =  3ft \\
a_{2} + 2a_{3} = 3r.
\end{cases}$$
The converse is not true: $(3,3(t-1),0) \notin H_{3t}$ but it belongs to $H_{3}$.

(iii) All generators of $H_{3t}$ different from $f_{1},f_{2},f_{3}$  have all three components different from $0$.
\end{remark}

\begin{remark} \rm \label{Remark: expression of generator} By construction, we can describe $$H_{3t} = \{u = A_{1}f_{1} + A_{2}f_{2} + A_{3}f_{3} + \sum_{j=1}^{3(t-1)+1} A_{j+3} (m + h_{j})\}\subset \ZZ_{\geq 0}^{3},$$ where $A_{i} \in \ZZ_{\geq 0}$ for $ i = 1, \hdots, 3t+1$ and $h_{j}$ is a generator of $H_{3(t-1)}$, for $j = 1, \hdots, 3(t-1)+1.$ Notice that a generator $h = (a_{1},a_{2},a_{3})$ of $H_{3t}$ different from $f_{1},f_{2},f_{3}$ can be expressed as $sm + h'$, where $0 < s = min\{a_{1},a_{2},a_{3}\} \leq t$ and $h' \in \{(3(t-s),0,0),(0,3(t-s),0), (0,0,3(t-s))\}$.
\end{remark}

We give a couple of examples.

\begin{example} \rm (i) Consider $H_{6}$. We have:
$(4,1,1) = m + (3,0,0)$, $(1,4,1) = m + (0,3,0),$ $(1,1,4) = (1,1,1) + (0,0,3)$ and $(2,2,2) = 2m$.

(ii) Consider $H_{9}$. We have:
$(7,1,1)= m + (6,0,0),$ $(1,7,1) = m + (0,6,0),$
$(1,1,7) = m + (0,0,6),$ $(5,2,2) = 2m + (3,0,0),$
$(2,5,2) = 2m + (0,3,0),$  $(2,2,5) = 2m + (0,0,3)$ and $(3,3,3) = 3m$.
\end{example}

Any $u \in H_{3t}$ represents a monomial of degree a multiple of $3t$, namely $(3t)f$. For any representation
$u = A_{1}f_{1} + A_{2}f_{2} + A_{3}f_{3} + \sum_{j=1}^{3(t-1)+1} A_{j+3} (m + h_{j})$ in $H_{3t}$, it holds that $\sum_{i=1}^{3t+1} A_{i} = f$.

\begin{lemma} \label{Lemma: Elements with only one zero component} Let $w = (a_{1},a_{2},a_{3}) \in H_{3}$ be such that $a_{i},a_{j} \neq 0$ and $a_{k} = 0$, for $\{i,j,k\} = \{1,2,3\}$. Then $w \in H_{3t}$ if and only if $a_{i}$ and $a_{j}$ are multiples of $3t$.
\end{lemma}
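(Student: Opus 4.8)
The plan is to prove both implications directly from the inductive generator description of $H_{3t}$, using only Remark~\ref{Remark: components of generators}(iii): every generator of $H_{3t}$ other than $f_1 = (3t,0,0)$, $f_2 = (0,3t,0)$, $f_3 = (0,0,3t)$ has all three coordinates strictly positive. (If one prefers not to invoke this, it can be re-derived on the spot by a one-line induction on $t$: for $t=1$ the only non-$f$ generator is $m=(1,1,1)$, and for $t>1$ each remaining generator has the form $m + h_j$ with $h_j$ a generator of $H_{3(t-1)}$, so adding $m=(1,1,1)$ makes every coordinate positive.)

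For the nontrivial direction, I would assume $w = (a_1,a_2,a_3) \in H_{3t}$ with $a_k = 0$ and $a_i, a_j > 0$; by symmetry of the coordinates it suffices to treat $k = 3$. Write $w$ as a nonnegative integer combination of the generators of $H_{3t}$, say $w = A_1 f_1 + A_2 f_2 + A_3 f_3 + \sum_{j} A_{j+3}(m + h_j)$ as in Remark~\ref{Remark: expression of generator}. Reading off the third coordinate, $f_3$ contributes $3t > 0$ and each $m + h_j$ contributes a strictly positive amount (being a generator of $H_{3t}$ distinct from $f_1,f_2,f_3$), while $f_1, f_2$ contribute $0$; since $a_3 = 0$ and all coefficients are nonnegative, this forces $A_3 = 0$ and $A_{j+3} = 0$ for every $j$. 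Hence $w = A_1 f_1 + A_2 f_2 = (3tA_1, 3tA_2, 0)$, so $a_1 = 3tA_1$ and $a_2 = 3tA_2$ are multiples of $3t$.

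For the converse, if $a_i, a_j$ are multiples of $3t$ and $a_k = 0$ (again take $k = 3$), then $w = \tfrac{a_1}{3t} f_1 + \tfrac{a_2}{3t} f_2$ is visibly a nonnegative integer combination of generators of $H_{3t}$, so $w \in H_{3t}$; the other two cases follow by permuting coordinates.

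I do not anticipate a genuine obstacle here: the argument is essentially bookkeeping on coordinates, and the only structural input — full support of all generators other than $f_1,f_2,f_3$ — is already available. The one point to state carefully is that in the expansion $w = \sum A_i(\text{generator})$ every coefficient $A_i$ is nonnegative, which is exactly what lets the vanishing of a single coordinate of $w$ kill all but the two ``monomial'' generators $f_i, f_j$.
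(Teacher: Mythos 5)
Your proof is correct and follows essentially the same route as the paper's: the paper also argues that a decomposition of $w$ with a zero coordinate cannot involve $f_3$ or any generator from $m + H_{3(t-1)}$ (all of which have full support), forcing $w = A_1 f_1 + A_2 f_2$, with the converse immediate. Your write-up merely makes the coefficient bookkeeping more explicit.
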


\begin{proof} We can assume $(i,j,k) = (1,2,3)$. If $w = (a_{1},a_{2},0) \in H_{3t}$, then $w$ cannot be generated in $H_{3t}$ by any element belonging to $m + H_{3(t-1)}$. So we obtain $w = A_1f_{1} + A_2f_{2}$ with $a_1 = 3tA_1$ and $a_2 = 3tA_2$. Conversely, $w = (3tA_1 ,3tA_2,0) \in H_{3t}$ for all integers $ A_1,A_2\geq 0.$
\end{proof}

\begin{corollary}\label{Corollary: Only two nonzero components} If $w\in H_3$ is as in Lemma \ref{Lemma: Elements with only one zero component}, then either $w \in H_{3t}$ or $w + f_{i}, w + f_{j} \notin H_{3t}$.
\end{corollary}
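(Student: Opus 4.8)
The plan is to prove the contrapositive-flavoured statement directly: assume $w=(a_1,a_2,a_3)\in H_3$ has exactly one zero component (say, without loss of generality, $(i,j,k)=(1,2,3)$, so $a_1,a_2\neq 0$ and $a_3=0$), and that $w\notin H_{3t}$; I must then show both $w+f_1\notin H_{3t}$ and $w+f_2\notin H_{3t}$. Since $w+f_1=(a_1+3t,a_2,0)$ and $w+f_2=(a_1,a_2+3t,0)$ both still have their third component equal to $0$ and their other two components nonzero, Lemma \ref{Lemma: Elements with only one zero component} applies verbatim to each of them.

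First I would apply Lemma \ref{Lemma: Elements with only one zero component} to $w$ itself: $w\notin H_{3t}$ forces that $a_1$ and $a_2$ are \emph{not both} multiples of $3t$. Next I would apply the same lemma to $w+f_1=(a_1+3t,a_2,0)$: this element lies in $H_{3t}$ if and only if $a_1+3t$ and $a_2$ are both multiples of $3t$, i.e.\ if and only if $a_1$ and $a_2$ are both multiples of $3t$ (since $3t\mid a_1+3t \iff 3t\mid a_1$). By the previous step that fails, so $w+f_1\notin H_{3t}$. The argument for $w+f_2=(a_1,a_2+3t,0)$ is identical, using $3t\mid a_2+3t\iff 3t\mid a_2$. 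The other orderings of $(i,j,k)$ are handled symmetrically, which gives the dichotomy in the statement.

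One small point worth checking explicitly is that $w+f_1$ and $w+f_2$ genuinely still lie in $H_3$ (so that Lemma \ref{Lemma: Elements with only one zero component} is applicable to them and not just vacuously true): this is immediate because $f_1=(3t,0,0)=t\,(3,0,0)\in H_3$ and $f_2=(0,3t,0)=t\,(0,3,0)\in H_3$, and $H_3$ is closed under addition, so $w+f_1, w+f_2\in H_3$ with the same zero/nonzero pattern as $w$. There is essentially no obstacle here; the corollary is a direct bookkeeping consequence of the lemma, the only thing to be careful about being the symmetry over the choice of which coordinate is the zero one and the trivial divisibility equivalence $3t\mid a+3t \iff 3t\mid a$.
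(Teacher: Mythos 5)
Your proof is correct and is precisely the argument the authors intend: the paper states this corollary without proof as an immediate consequence of Lemma \ref{Lemma: Elements with only one zero component}, and your bookkeeping (applying the lemma to $w$, $w+f_i$, $w+f_j$, all of which share the same zero/nonzero pattern, together with $3t\mid a+3t\iff 3t\mid a$) is exactly the intended reasoning.
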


\begin{remark} \rm \label{Remark: Only one nonzero component} If $w = (a_{1},a_{2},a_{3}) \in H_{3t}$ only has one nonzero component, namely $a_{i}$, then $w = A_if_i$, where $a_i = 3tA_i$.
\end{remark}

We are now ready to prove the main theorem of this section.

\begin{theorem}\label{Theorem: Main Theorem} For any $t\geq 1$, $k[H_{3t}]$ is CM.

\end{theorem}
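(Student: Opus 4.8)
The plan is to apply Theorem~\ref{Theorem:Criterion aCM-Trung} with the $\Q$-linearly independent elements $f_1 = (3t,0,0)$, $f_2 = (0,3t,0)$, $f_3 = (0,0,3t)$, which by Remark~\ref{Remark: components of generators}(i) satisfy $(3t)H_{3t} \subset \langle f_1,f_2,f_3\rangle$. So it suffices to verify condition (ii): if $w \in L(H_{3t})$ and there are indices $i \leq j$ in $\{1,2,3\}$ with $w + f_i \in H_{3t}$ and $w + f_j \in H_{3t}$, then $w \in H_{3t}$. I would split into the two cases $i = j$ and $i \neq j$. When $i = j$, say $w + f_1 \in H_{3t}$, I would examine a representation of $w+f_1$ in the generators of $H_{3t}$ and argue that, after subtracting $f_1$, we either still have a valid representation or we can reorganize the $m+h$ pieces; the key point is that the $\Z^3_{\geq 0}$-coordinates of $w$ are already nonnegative (since $w + f_1 \in H_{3t} \subset \Z^3_{\geq 0}$ forces $a_1 \geq -3t$, and membership in $L(H_{3t})$ together with the structure should force $w \in \Z^3_{\geq 0}$ — this needs care).

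The case $i \neq j$, say $w + f_1, w + f_2 \in H_{3t}$, is where the combinatorial work lies. Here I would first establish that $w \in \Z^3_{\geq 0}$: writing $w = (a_1,a_2,a_3)$, from $w+f_1 \in H_{3t} \subset H_3$ and $w + f_2 \in H_3$ we get via Remark~\ref{Remark: components of generators}(ii) that $w$ itself satisfies the system $(*)$ defining membership in $H_3$ (the linear congruence conditions are preserved under adding $f_1$ or $f_2$), hence $w \in H_3$, so in particular $a_1,a_2,a_3 \geq 0$. Now I would distinguish according to how many components of $w$ vanish. If $w$ has a zero component, Corollary~\ref{Corollary: Only two nonzero components} and Remark~\ref{Remark: Only one nonzero component} handle it directly: if $a_3 = 0$ then either $w \in H_{3t}$ already, or $w + f_1, w + f_2 \notin H_{3t}$, contradicting the hypothesis; the one-nonzero-component and zero cases are immediate. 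The substantive case is $a_1 a_2 a_3 \neq 0$.

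For the case $a_1 a_2 a_3 \neq 0$, let $s = \min\{a_1,a_2,a_3\}$. Using the description in Remark~\ref{Remark: expression of generator}, I expect to show by induction on $t$ that $w = sm + w'$ where $w' \in H_3$ has a zero component, and that $w' + f_i$-type conditions descend: concretely, $w + f_1 \in H_{3t}$ should, after peeling off copies of $m$, give that $w' + (3t,0,0)$-analogue lies in $H_{3(t-s)}$ or similar, reducing to the already-treated case with a smaller value of $t$ (note $s \geq 1$, so $t - s < t$, but we need $s \leq t$, which holds since $w \in H_3$ means $a_1+a_2+a_3 = 3ft \geq 3s$... actually I'd want $3s \leq$ the degree; more carefully, if $w + f_1 \in H_{3t}$ then by Remark~\ref{Remark: expression of generator} each generator appearing with all components nonzero is $\leq tm$ plus a corner, and summing forces $s \leq t$ — I should verify this). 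The induction base $t = 1$ is Theorem~\ref{Theorem: GT-varieties are aCM}, which tells us $k[H_3]$ is CM, so $H_3$ satisfies condition (ii) of Theorem~\ref{Theorem:Criterion aCM-Trung} trivially.

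The main obstacle I anticipate is the bookkeeping in the $a_1a_2a_3 \neq 0$ case: precisely tracking how a representation of $w + f_i$ in the generators of $H_{3t}$ transforms when we subtract $sm$ and then $f_i$, and confirming that the hypothesis "$w + f_i$ and $w+f_j$ both in $H_{3t}$" genuinely descends to the analogous hypothesis for a smaller semigroup $H_{3(t-s)}$ (or $H_{3(t-1)}$) so the induction closes. In particular one must rule out the degenerate possibility that subtracting $f_i$ makes a coordinate negative — this is why establishing $w \in H_3 \subset \Z^3_{\geq 0}$ first is essential — and one must handle the interaction between the "corner" generators $f_1,f_2,f_3$ and the "interior" generators $m + h_j$ carefully, since a given $w$ may have several representations and only some of them survive the subtraction. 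Once the descent is set up cleanly, the conclusion $w \in H_{3t}$ follows, condition (ii) of Theorem~\ref{Theorem:Criterion aCM-Trung} holds, and $k[H_{3t}]$ is Cohen--Macaulay.
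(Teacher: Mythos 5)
Your overall framework is the right one and coincides with the paper's: apply Theorem~\ref{Theorem:Criterion aCM-Trung} with $f_1=(3t,0,0)$, $f_2=(0,3t,0)$, $f_3=(0,0,3t)$, dispose of the vectors with a vanishing coordinate via Corollary~\ref{Corollary: Only two nonzero components} and Remark~\ref{Remark: Only one nonzero component}, observe that any relevant $w$ lies in $H_3$ and hence has nonnegative coordinates, and concentrate on the case $a_1a_2a_3\neq 0$. Two problems remain, one minor and one fatal. The minor one: the case $i=j$ you propose to treat does not exist. The criterion requires two \emph{distinct} indices (see the ``in particular'' clause of Theorem~\ref{Theorem:Criterion aCM-Trung}); with $i=j$ the condition is simply false (take $w=-f_1$, so $w+f_1=0\in H_{3t}$ but $w\notin H_{3t}$), and your suggestion that $w\in L(H_{3t})$ ``should force $w\in\ZZ^3_{\geq 0}$'' is wrong, since $L(H_{3t})$ is a finite-index subgroup of $\ZZ^3$ and contains vectors with negative entries. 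Nonnegativity of $w$ comes only from using \emph{both} hypotheses $w+f_i,\,w+f_j\in H_{3t}\subset\ZZ^3_{\geq 0}$ with $i\neq j$, as you correctly do later.

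The fatal problem is that the entire content of the theorem --- the case $a_1a_2a_3\neq 0$ --- is left to an induction whose descent step is not well defined. You propose to write $w=sm+w'$ with $s=\min\{a_1,a_2,a_3\}$ and to transport the hypotheses to the smaller semigroup $H_{3(t-s)}$. But the semigroups $H_{3t}$ and $H_{3(t-s)}$ are graded in incompatible ways: an element of $H_{3t}$ has coordinate sum a multiple of $3t$, so $w-sm$ has coordinate sum $3ft-3s$, which is in general not a multiple of $3(t-s)$, and there is no membership question for it in $H_{3(t-s)}$. The recursion $H_{3t}=\langle f_1,f_2,f_3,\,m+H_{3(t-1)}\rangle$ describes the \emph{generators} of $H_{3t}$, not its arbitrary elements, so ``peeling off copies of $m$'' from a general element does not land in a smaller $H$. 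What is actually needed (and what the paper does) is a direct, fixed-$t$ analysis: take a representation of $w+f_1$ as a sum of generators, normalize it using the exchange identity $[s_jm+(3(t-s_j),0,0)]+[s_im+(3(t-s_i),0,0)]=[(s_j-1)m+(3(t-s_j+1),0,0)]+[(s_i+1)m+(3(t-s_i-1),0,0)]$ into a small number of canonical shapes, and then either exhibit $w$ explicitly as a sum of generators or show, by counting the minimal number of summands any representation of $w$ (and of $w+f_2$, $w+f_3$) would require, that $w+f_2,w+f_3\notin H_{3t}$. None of this counting argument appears in your proposal, and you acknowledge as much; as written, the proof does not go through.
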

\begin{proof} By Theorem \ref{Theorem:Criterion aCM-Trung}, it is enough to prove that $H^{1} = \{w \in \overline{H_{3t}} \, \mid \,
w + f_{i}, w+ f_{j} \in H_{3t} \; \text{for some} \; i, j \in \{1,2,3\}, i\neq j \}$ is contained in  $H_{3t}$.  We {\em claim} that this inclusion is a consequence of the following  condition: 

{\em Condition $(*)$:} if $w = (a_{1},a_{2},a_{3}) \in H_{3}$ is such that $a_{1}a_{2}a_{3} \neq 0$ and  $w + f_{i} \in H_{3t}$ for some $i \in \{1,2,3\}$, then either $w \in H_{3t}$ or $w + f_{j}, w + f_{k} \notin H_{3t}$ for $\{i,j,k\} = \{1,2,3\}$. 
\medskip

{\em Proof of the claim}. We have already shown the same statement for elements $w$ with $a_{1}a_{2}a_{3} = 0$ in Corollary \ref{Corollary: Only two nonzero components} and Remark \ref{Remark: Only one nonzero component}. Since $H^{1} \subset \overline{H_{3t}} \subset H_{3}$, an element $w \in H^{1}$ satisfying $w + f_{j}, w + f_{k} \in H_{3t}$, for some $j,k \in \{1,2,3\}$ such that $j \neq k$, belongs to $H_{3t}$.  This proves the claim.
\medskip

{\em Proof of Condition $(*)$}. We can assume $(i,j,k) = (1,2,3)$. Set $w + f_{1} = A_{1}f_{1} + A_{2}f_{2} + A_{3}f_{3} + \sum_{j} A_{j+3} (m + h_{j}) \in H_{3t}$. We may assume that $A_{1} = 0$, otherwise the result is trivial. We observe the following. Let $u = m + h_{j} = s_{j}m + (3(t-s_{j}),0,0)$ and $v = m + h_{i} = s_{i}m + (3(t-s_{i}),0,0)$, with $s_{j},s_{i} > 0$, be two generators of $H_{3t}$. Therefore we can write $u + v = [(s_{j}-1)m + (3(t-s_{j}+1),0,0)]+
[(s_{i}+1)m + [(3(t-s_{i}-1),0,0)].$ 
Similarly if we replace $h_{j}$, $h_{i}$ by $(0,3(t-s_{j}),0), (0,3(t-s_{i}),0)$ or  $(0,0,3(t-s_{j}))$, $(0,0,3(t-s_{i}))$ respectively. So after doing suitable transformations on the summands of $w + f_{1}$, we reduce it to one of the following forms.

\vspace{0.2cm}
\noindent \underline{Case 1}: $w + f_{1} = A_{2}f_{2} + A_{3}f_{3} + [s_{1}m
+ (3(t-s_{1}),0,0)] + [s_{2}m + (0,3(t-s_{2}),0)] + [s_{3}m + (0,0,3(t-s_{3}))]$ with $0 < s_{1} < t$.
Since $s_{1} + s_{2} + s_{3} + 3(t-s_{1}) = 3t + a_{1}$, we have $0 \leq s_{2},s_{3} < t$, where $s_{2} > 0$ or $s_{3} > 0$. Let us assume that $s_{2},s_{3} > 0$, the other cases follow in the same way up to minor modifications. By hypothesis, $w + f_{1}$ can be written as a sum of $A_{2} + A_{3} + 3$ generators of $H_{3t}$.
The first component of $w + f_{1}$ corresponds to  $a_{1} + 3t = s_{1} + 3(t-s_{1}) + s_{2} + s_{3}$, so $a_{1} = s_{2} + s_{3} - 2s_{1}$.
Notice that $w = (s_{2} + s_{3} -2s_{1}, s_{1} + s_{2} + s_{3} + A_{2}3t + 3(t-s_{2}), s_{1} + s_{2} + s_{3} + A_{3}3t + 3(t-s_{3}))$. If $s_{2},s_{3} \geq s_{1}$, we have
$w = A_{2}f_{2} + A_{3}f_{3} + [(s_{2}-s_{1})m + (0,3(t-s_{2}+s_{1}),0)] + [(s_{3}-s_{1})m + (0,0,3(t-s_{3}+s_{1}))]$.
Indeed, $s_{1}+s_{2}+s_{3} = s_{2}-s_{1} + s_{3} - s_{1} + 3s_{1}$, hence $w \in H_{3t}$. Otherwise, suppose for instance that $s_{2} < s_{1}$ and write
\begin{equation}
w = (s_{2}+s_{3}-2s_{1})m +
(0, A_{2}3t + 3t - 3s_{2} + 3s_{1}, A_{3}3t + 3t - 3s_{3} + 3s_{1}).
\end{equation}
If $w \in H_{3t}$, then $w$ is a sum of $A_{2} + A_{3} + 2$ generators of $H_{3t}$. We observe that $A_{2}3t + 3t - 3s_{2} + 3s_{1} > (A_{2}+1)3t$, $A_{3}3t + 3t - 3s_{3} + 3s_{1} > A_{3}3t$ and $s_{2} + s_{3} - 2s_{1} < s_{3} < t$. This means that we can write $w$ as a sum of at least $A_{2} + 2$ generators of  type $sm + (0,3(t-s),0)$ plus at least $A_{3} + 1$ generators of type $sm + (0,0,3(t-s))$, where all $s < t$. Indeed, since $a_{1} = s_{2} + s_{3} - 2s_{1} < t$, a generator in $w$ cannot be of the form $tm$, otherwise $w + f_{1}$ does. If this was the case,  such generator would be either $f_{2}$, or $f_{3}$, or it would correspond to $sm + (0,3(t-s),0)$ or $sm + (0,0,3(t-s))$ with $0 < s < t$.  But this is a contradiction, because that would give rise to an expression of $w$ with at least $A_{2}+A_{3}+3$ summands (see Remark \ref{Remark: components of generators}(3)). Performing the same kind of arguments, we see that $w+f_2,w+f_3 \notin H_{3t}$. 
The case $s_{3} < s_{1}$ is analogous.

\vspace{0.2cm}
\noindent \underline{Case 2:} $w + f_{1} = A_{2}f_{2} + A_{3}f_{3} + tm + [s_{1}m + (3(t-s_{1}),0,0)] + [s_{2}m + (0,3(t-s_{2}),0)] + [s_{3}m + (0,0,3(t-s_{3}))]$, where $s_{1} > 0$ and some $s_i > 0, \; i = 2,3$. We assume $s_{2},s_{3} > 0$ for simplicity. By hypothesis, $w + f_{2}$ is a sum of $A_{2} + A_{3} + 4$ generators of $H_{3t}$. If $s_{2} > s_{1}$ (respectively  $s_{3} > s_{1}$),
 $$w = A_{2}f_{2} + A_{3}f_{3} + (t-s_{1})m + (0,3s_{1},0) +
 s_{2}m + (0,3(t-s_{2}),0) + (s_{3}-s_{1})m + (0,0,3(t-s_{3}+s_{1})),$$
 hence $w \in H_{3t}$. We see that if $s_2,s_3 < s_1$, then  $w \notin H_{3t}$. If not, $w$ can be written as a sum of $A_{2} + A_{3} + 3$ generators and we have:
$$w = m(t + s_{2} + s_{3} - 2s_{1})
+ (0, 3tA_{2} + 3t - 3s_{2} + 3s_{1}, 3tA_{3} + 3t - 3s_{3} + 3s_{1}).$$
Notice that $t + s_{2} + s_{3} - 2s_{1} < t$, $ 3tA_{2} + 3t - 3s_{2} + 3s_{1} > (A_{2} + 1)3t$ and $3tA_{3} + 3t - 3s_{3} + 3s_{1} > (A_{3} + 1)3t$. So, $w$ is a sum of at least $A_{2} + A_{3} + 4$ generators of $H_{3t}$. Arguing in a similar way, we also obtain that $w + f_{2}, w + f_{3} \notin H_{3t}$.

\vspace{0.2cm}
\noindent \underline{Case 3:} $w + f_{1} = A_{2}f_{2} + A_{3}f_{3} + 2tm + [s_{1}m + (3(t-s_{1}), 0,0)] + [s_{2}m + (0,3(t-s_{2}),0) + s_{3}m + (0,0,3(t-s_{3}))]$. Here the situation is slightly different. If $s_{1} > 0$, then $w \in H_{3t}$. Indeed, $w = A_{2}f_{2} + A_{3}f_{3} + [(t-s_{1})m + (0,3(t-s_{1}),0)] + [(t-s_{1})m + (0,0,3(t-s_{1}))] + [s_{2}m + (0,3(t-s_{2}),0)] + [s_{3}m + (0,0,3(t-s_{3}))].$ So we suppose $s_{1} = 0$, in which case $s_{2},s_{3} > 0$ and we have:
$$w = (s_{2} + s_{3} - t)m + (0,3tA_{2} + 3t + 3t - 3s_{2}, 3tA_{3} + 3t + 3t - 3s_{3}),$$
 with $s_{2} + s_{3} - t < t$, $3tA_{2} + 3t + 3t - 3s_{2} >
 (A_{2} + 1)3t$ and $3tA_{3} + 3t + 3t - 3s_{3} > (A_{3} + 1)3t$.
If $w \in H_{3t}$, then  it should be written as a sum of at least $A_{2} + A_{3} + 4$ generators, which is a contradiction. Performing the same arguments we also obtain $w + f_{2}, w + f_{3} \notin H_{3t}$.

 \vspace{0.2cm}
 \noindent \underline{Case 4:} $w+f_{1} =
 A_{2}f_{2} + A_{3}f_{3} + K(tm) + [s_{1}m + (3(t-s_{1}),0,0)] + [s_{2}m + (0,3(t-s_{2}),0)] + [s_{3}m + (0,0,3(t-s_{3}))]$, with $K \geq 3$.  We always have $w \in H_{3t}$, indeed $tm + tm + tm = f_{1} + f_{2} + f_{3}$.

\medskip
This proves {\em Condition $(*)$} and the theorem follows.
\end{proof}

Let us see how Theorem \ref{Theorem: Main Theorem} works in $k[H_{6}]$.
\begin{example}\rm

\noindent \underline{Case 1.} The only possibility is $w + f_{1} = A_{2}(0,6,0) + A_{3}(0,0,6) + [(1,1,1) + (3,0,0)] + [(1,1,1) + (0,3,0)] + [(1,1,1) + (0,0,3)]$, where necessarily $a_{1} = 0$. For simplicity we set $A_{2} = A_{3} = 0$. If $s_{1},s_{2} > 0$, then $w = (0, 1 + 4 + 1,  1 + 1 + 4) = f_{2} + f_{3} \in H_{6}$.

\vspace{0.1cm}
\noindent \underline{Case 2.} We consider $w + f_{1} = (2,2,2) + [(1,1,1) + (3,0,0)] + [(1,1,1) + (0,3,0)] + [(1,1,1) + (0,0,3)]$, with $s_{1} = s_{2} = s_{3} = 1$. Then we have: $w = (2,2,2) + (0, 2 + 4, 2 + 4) = [m + (0,3,0)] + [m + (0,0,3)] \in H_{6}$.

\vspace{0.1cm}
\noindent \underline{Case 3.} We consider $w + f_{1}
= (2,2,2) + (2,2,2) + [(1,1,1) + (0,3,0)] + [(1,1,1) + (0,0,3)]$,
with $a_{1} = 0$. Then we have: $w = (0,9,9), w + (0,6,0) = (0,15,9), w + (0,0,6) = (0,9,15) \notin H_{6}$.
\end{example}

Fix an integer $k\geq 1$. For each integer $t' \geq 0$, we define
$H_{3(1 + t'k)}^{k} := \langle (3(1+t'k),0,0), (0,3(1+t'k),0), (0,0,3(1+t'k)), km + H_{3(1 + (t'-1)k)}^{k}\rangle \subset \ZZ_{\geq 0}^{3}$. We have:

\begin{corollary} $k[H_{3(1+kt')}^{k}]$ is CM for all integers $k \geq 1$ and $t' \geq 0$.
\end{corollary}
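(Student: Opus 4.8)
The plan is to reduce the statement to Theorem \ref{Theorem: Main Theorem} by observing that, up to rescaling the ambient lattice, the semigroups $H_{3(1+kt')}^{k}$ are isomorphic to the semigroups $H_{3t}$ already treated. Concretely, I would first fix $k \geq 1$ and set $e_1 = (1,0,0)$, $e_2 = (0,1,0)$, $e_3 = (0,0,1)$, $m = (1,1,1)$. Inspecting the inductive definition, every generator of $H_{3(1+t'k)}^{k}$ is of the form $(km)^{s} + 3(t'-s)k \cdot e_{i}$ for $0 \le s \le t'$ (together with the three ``corner'' generators $3(1+t'k)e_i$, which are the case $s=0$, $t'$ replaced appropriately). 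Comparing with Remark \ref{Remark: expression of generator}, a generator of $H_{3t}$ with $t = 1+t'k$ has the shape $sm + 3(t-s)e_i$ with $0 \le s \le t$. The point is that the generators of $H_{3(1+t'k)}^{k}$ are exactly those generators of $H_{3(1+t'k)}$ for which the coefficient $s$ of $m$ is a multiple of $k$. So $H_{3(1+t'k)}^{k}$ is the subsemigroup of $H_{3(1+t'k)}$ consisting of lattice points whose coordinates are congruent to a point of the sublattice generated by $km$ and $3k e_1, 3k e_2, 3k e_3$; equivalently, it sits inside a scaled copy of $\ZZ^3$.

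Next I would make this precise via a lattice isomorphism. Consider the linear map $\psi : \ZZ^3 \to \ZZ^3$ determined on the relevant sublattice so that $km \mapsto m$ and $3k e_i \mapsto 3 e_i$ (this is well-defined because $m, 3e_1, 3e_2, 3e_3$ are linearly independent over $\Q$, so $km, 3ke_1, 3ke_2, 3ke_3$ span a finite-index sublattice on which $\psi$ is forced, and one checks it maps integral points of $H^k$ to integral points). Under $\psi$, the generator $(km)^s + 3(t'-s)k\, e_i$ of $H_{3(1+t'k)}^{k}$ maps to $s\, m + 3(t'-s) e_i$, and the corner $3(1+t'k) e_i = (km)^{?}\ldots$ — more carefully, one verifies directly that $\psi$ carries the generating set of $H_{3(1+t'k)}^{k}$ bijectively onto the generating set of $H_{3(1+t')}$ (i.e.\ the semigroup $H_{3t}$ with $t = 1+t'$). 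Hence $\psi$ restricts to a semigroup isomorphism $H_{3(1+t'k)}^{k} \cong H_{3(1+t')}$, which induces a graded $k$-algebra isomorphism $k[H_{3(1+t'k)}^{k}] \cong k[H_{3(1+t')}]$. Cohen-Macaulayness is preserved under isomorphism, so the result follows from Theorem \ref{Theorem: Main Theorem} applied with $t = 1+t'$.

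The step I expect to be the main obstacle is checking that the map $\psi$ is genuinely an isomorphism of semigroups and not merely of the enveloping groups: one must confirm that the image of $H_{3(1+t'k)}^{k}$ under $\psi$ is all of $H_{3(1+t')}$ (no extra points appear, none are missed), which amounts to matching the two inductive descriptions term by term and verifying, via the normal form of Remark \ref{Remark: expression of generator}, that the ``$s$ divisible by $k$'' constraint in $H^k$ corresponds exactly to dropping the divisibility constraint after rescaling. An alternative, and perhaps cleaner, route that avoids the explicit lattice map is to rerun the proof of Theorem \ref{Theorem: Main Theorem} verbatim with $m$ replaced by $km$ and $f_i = 3(1+t'k)e_i$, applying the criterion of Theorem \ref{Theorem:Criterion aCM-Trung} directly: the combinatorial case analysis (Cases 1--4) goes through unchanged because it only uses the relation $tm + tm + tm = f_1 + f_2 + f_3$, here replaced by $(km)\cdot t' + (km)\cdot t' + (km)\cdot t' = f_1 + f_2 + f_3$ with $t$ in the role of $t'$, and the analogues of Lemma \ref{Lemma: Elements with only one zero component} and Remark \ref{Remark: Only one nonzero component}. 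Either way, no new ideas beyond those of Theorem \ref{Theorem: Main Theorem} are needed.
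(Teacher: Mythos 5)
Your primary route --- the lattice map $\psi$ with $km \mapsto m$, $3ke_i \mapsto 3e_i$, and the resulting claim $H_{3(1+t'k)}^{k} \cong H_{3(1+t')}$ --- does not work, and the claimed isomorphism is in fact false in general. There are two concrete problems. First, $\psi$ is only determined on the sublattice spanned by $km, 3ke_1, 3ke_2, 3ke_3$, namely $k\cdot\{(a_1,a_2,a_3) : a_1 \equiv a_2 \equiv a_3 \pmod 3\}$, and the generators of $H_{3(1+t'k)}^{k}$ do not lie in it: the corner $(3(1+t'k),0,0)$ lies there only when $k \mid 3$, and the top generator $(1+t'k)m$ only when $k \mid 1$. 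So $\psi$ is not defined on the semigroup at all. Second, no rescaling can succeed, because the two semigroups are genuinely non-isomorphic in general. Take $k=2$, $t'=1$: then $H_{9}^{2} = \langle (9,0,0),(0,9,0),(0,0,9),(5,2,2),(2,5,2),(2,2,5),(3,3,3)\rangle$, while $H_{6} = \langle (6,0,0),(0,6,0),(0,0,6),(4,1,1),(1,4,1),(1,1,4),(2,2,2)\rangle$. In $H_6$ one has the degree-two coincidence $(6,0,0)+(2,2,2) = (4,1,1)+(4,1,1)$, so $k[H_6]$ has quadratic relations among its seven generators; by contrast, all $\binom{8}{2}=28$ pairwise sums of the seven generators of $H_{9}^{2}$ are distinct (reduce mod $3$ to split them into three classes and compare coordinate multisets within each), so $k[H_{9}^{2}]$ has no quadratic relations. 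A semigroup isomorphism sends indecomposables to indecomposables and is therefore graded, so $k[H_{9}^{2}] \not\cong k[H_{6}]$. Hence the corollary cannot be reduced to the case $k=1$ by a change of lattice.

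The ``alternative'' you mention in your last sentences is the paper's actual proof: one reruns the argument of Theorem \ref{Theorem: Main Theorem} with $m$ replaced by $km$ and $f_i = 3(1+t'k)e_i$, checking the analogues of Lemma \ref{Lemma: Elements with only one zero component} and Remark \ref{Remark: Only one nonzero component} and that Cases 1--4 go through before applying Theorem \ref{Theorem:Criterion aCM-Trung}. Note, however, that the key relation in Case 4 is $3\cdot\bigl((1+t'k)m\bigr) = f_1+f_2+f_3$: the top generator of $H_{3(1+t'k)}^{k}$ is $(1+t'k)m$, not $t'(km)$ as you wrote. As submitted, your proposal rests its weight on the false isomorphism and only gestures at the correct argument, so it has a genuine gap.
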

\begin{proof} It follows from the same proof as Theorem \ref{Theorem: Main Theorem} replacing $m$ by $km$.
\end{proof}

\begin{remark} \rm \begin{enumerate}
\item[(i)] $H_{3(1 + t'k)}^{k}$ is generated by $3(t'+1) + 1$ elements in $\ZZ^{3}$.
\item[(ii)] Our initial family $H_{3t}$ can be rewritten as
$H_{3(1+t')}^{1}$ for $t' \geq 0$.
\end{enumerate}
\end{remark}


\end{document}